\newtheorem{theo}{Theorem}[section]
\newtheorem{lemma}[theo]{Lemma}
\newtheorem{propo}[theo]{Proposition}
\newtheorem{coro}[theo]{Corollary}
\newtheorem*{theo:char}{Theorem~\ref{char-single}}
\newtheorem*{theo:birk}{Theorem~\ref{birkhoff}}
\newtheorem*{theo:monad-equational}{Proposition~\ref{monad->equations}}
\newtheorem*{theo:multi}{Theorem~\ref{multi-char}}
\theoremstyle{definition}
\newtheorem{defi}[theo]{Definition}
\newtheorem{rem}[theo]{Remark}
\newtheorem{exam}[theo]{Example}
\newtheorem{exams}[theo]{Examples}
\newcommand\Inj{\operatorname{\it Inj}}
\newcommand\Mod{\operatorname{\bf Mod}}
\newcommand\Surj{\operatorname{\it Surj}}
\newcommand\op{\operatorname{op}}
\newcommand\id{\operatorname{id}}
\newcommand\Set{\operatorname{\bf Set}}
\newcommand\Cat{\operatorname{\bf Cat}}
\newcommand\Met{\operatorname{\bf Met}}
\newcommand\Str{\operatorname{\bf Str}}
\newcommand\CMet{\operatorname{\bf CMet}}
\newcommand\Ab{\operatorname{\bf Ab}}
\newcommand\Ban{\operatorname{\bf Ban}}
\newcommand\SSet{\operatorname{\bf SSet}}
\newcommand\Gra{\operatorname{\bf Gra}}
\newcommand\Pos{\operatorname{\bf Pos}}
\newcommand\MGra{\operatorname{\bf MGra}}
\newcommand\CPO{\operatorname{\bf CPO}}
\newcommand\eps{\varepsilon}
\newcommand\ca{\mathcal {A}}
\newcommand\cb{\mathcal {B}}
\newcommand\cc{\mathcal {C}}
\newcommand\cd{\mathcal {D}}
\newcommand\cg{\mathcal {G}}
\newcommand\ch{\mathcal {H}}
\newcommand\ce{\mathcal {E}}
\newcommand\ck{\mathcal {K}}
\newcommand\cl{\mathcal {L}}
\newcommand\cm{\mathcal {M}}
\newcommand\ct{\mathcal {T}}
\newcommand\cv{\mathcal {V}}
\newcommand{\tx}{\textnormal}
\newcommand{\bo}{\mathbf}
\date{November 25, 2025}
\begin{document}
\title[Towards enriched universal algebra]
{Towards enriched universal algebra}
\author[J. Rosick\'{y} and G. Tendas]
{J. Rosick\'{y} and G. Tendas}
\thanks{Both authors acknowledge the support of the Grant Agency of the Czech Republic under the grant 22-02964S. The second author also acknowledges the support of the EPSRC postdoctoral fellowship EP/X027139/1. We also thank Reuben Hillyard and the anonymous referee for valuable feedback} 
\address{
\newline J. Rosick\'{y}\newline
Department of Mathematics and Statistics, 
Masaryk University, Faculty of Sciences\newline
Kotl\'{a}\v{r}sk\'{a} 2, 611 37 Brno, Czech Republic\newline
\textnormal{rosicky@math.muni.cz}\newline
\newline G. Tendas\newline
Department of Mathematics and Statistics\newline
Masaryk University, Faculty of Sciences\newline
Kotl\'{a}\v{r}sk\'{a} 2, 611 37 Brno, Czech Republic\vspace{5pt}\newline
\textit{Secondary address:}\newline
Department of Mathematics, University of Manchester,\newline 
Faculty of Science and Engineering, \newline
Alan Turing Building, M13 9PL Manchester, UK\newline
\textnormal{giacomo.tendas@manchester.ac.uk}
}

\begin{abstract}
	Following the classical approach of Birkhoff, we suggest an enriched version of {\em universal algebra}. Given a suitable base of enrichment $\cv$, we define a {\em language} $\mathbb L$ to be a collection of $(X,Y)$-ary function symbols whose {\em arities} are taken among the objects of $\cv$. The class of {\em $\mathbb L$-terms} is constructed recursively from the symbols of $\mathbb L$, the morphisms in $\cv$, and by incorporating the monoidal structure of $\cv$. Then, {\em $\mathbb L$-structures} and interpretations of terms are defined, leading to {\em enriched equational theories}. In this framework we characterize algebras for finitary monads on $\cv$ as models of enriched equational theories.
\end{abstract} 
\keywords{Enriched categories, Universal algebra, Monads, Birkhoff variety}
\subjclass{18D20, 03C05, 18C05, 18C15}

\maketitle

\setcounter{tocdepth}{1}
\tableofcontents

\section{Introduction}
Universal algebra, created by Birkhoff~\cite{Bi}, deals with sets $A$ equipped with functions $f_A\colon A^n\to A$ where $f$ is a function symbol and $n$ a finite cardinal called the {\em arity} of $f$. Function symbols
together with their arities form a set $\mathbb L$ called a {\em language} (or a signature). Starting from such a language one builds {\em terms} and {\em equations} and characterizes classes of algebras satisfying certain equations as classes closed under products, substructures and quotients (Birkhoff's theorem). 

A categorical treatment of universal algebra was given by Lawvere~\cite{La} using his concept of an {\em algebraic theory}. This is the data of a category whose objects $J_n$ are indexed by finite cardinals and satisfy $J_n=(J_1)^n$. From the universal algebra point of view, morphisms $J_n\to J_m$ correspond to $m$-tuples of $n$-ary terms. Alternatively, these morphisms can be viewed as $(n,m)$-ary terms, where $n$ and $m$ are respectively the input and output arity, and the traditional superposition of terms $s(t_1,\dots,t_m)$ can be replaced with the composition $s\circ t$ where $t$ is the $(n,m)$-ary term induced by the family $t_i$, for $i\leq n$. This was further developed by Linton~\cite{L} who showed that infinitary Lawvere theories correspond to infinitary monads on $\Set$. Later, also Linton~\cite{L1}, proved that the language of $(X,Y)$-ary operations can describe monads on an arbitrary category $\cv$ if we take $X$ and $Y$ to be objects of $\cv$. The first attempt to create a syntactic concept of a term in this framework was made by the first author in~\cite{R0}.

An $(X,Y)$-ary function symbol of~\cite{L1} is interpreted, on a structure $A$, as a function $\cv(X,A)\to\cv(Y,A)$ between the homsets of $\cv$. When $\cv$ is symmetric monoidal closed, another natural but different way to interpret an $(X,Y)$-ary function symbol is as a morphism 
$$f_A\colon A^X\longrightarrow A^Y$$ 
in $\cv$, where $A^{(-)}:=[-,A]$ is the internal hom in $\cv$. Taking such a monoidal closed $\cv$ to be the base of enrichment, this leads towards a notion of {\em enriched universal algebra} which, we shall see, is captured by enriched theories and monads. 

The first attempts to look at an enriched categorical version of universal algebra, following the path of Lawvere, is due to Dubuc~\cite{Du} and Borceux and Day~\cite{BD}. Later, Power formalized the notion of enriched Lawvere theory, see~\cite{Po} and then~\cite{NP} with Nishiwaza, and the first author and Lack further generalized the concepts in~\cite{LR11}. An even more general treatment was given by Bourke and Garner~\cite{BG} who introduced the notion of {\em pretheory}; see also~\cite{LP0,Ark} for other more recent approaches. 

The pretheories of~\cite{BG} are identity-on-objects $\cv$-functors $J\colon \ca^{\op}\to\ct$ where $\ca$ is a full subcategory of $\cv$ consisting of arities. A special case is a Lawvere theory which is an identity-on-objects functor $J\colon \ca^{\op}\to\ct$ where $\ca$ is the full subcategory of $\Set$ consisting of the finite cardinals. In this case $J$ preserves finite products, making it a {\em theory} in the sense of \cite{BG}. More generally, if $\cv$ is locally finitely presentable as a closed category, then finitary enriched monads on $\ck$ correspond to enriched Lawvere theories; that is, to identity-on-objects enriched functors $J\colon \cv_f^{\op}\to\ct$ preserving finite powers, see Power~\cite{Po}.

All the enriched generalizations mentioned above follow the purely categorical approach of Lawvere, but do not provide a direct generalization of universal algebra as introduced by Birkhoff. In fact, enriched instances of classical universal algebra, with function symbols, recursively generated terms, and equations, have been developed only in specific situations: notably over posets (\cite{Bl,ADV,AFMS}), metric spaces (\cite{MPP1,MPP,A,ADV1}), and complete partial orders (\cite{ANR,ADV}). 
In this paper, we unify this fragmented picture under the same general theory, with the aim of providing new useful tools that will allow the development of universal algebra in new areas of enriched category theory.

Alternative approaches, making use of certain terms and equations, have been considered by Fiore and Hur~\cite{FH} and Lucyshyn-Wright and Parker~\cite{LP}; but these do not follow the classical approach of universal algebra where terms are recursively generated by the function symbols under change of variables and superposition.

{\bf Contents of the paper.} We begin with a language $\mathbb L$ given by a set of $(X,Y)$-ary function symbols, whose arities $X$ and $Y$ are objects of the base of enrichment $\cv$. These kind of languages were introduced in~\cite[5.1]{LP} as {\em free-form signatures}; our input arity $X$ is called an arity there and our output arity $Y$ is a parameter.
We then define {\em enriched terms} recursively as follows (Definition~\ref{terms}):
\begin{enumerate}
	\item every morphism $f\colon Y\to X$ of $\cv$ is an $(X,Y)$-ary term; 
	\item every function symbol $f:(X,Y)$ of $\mathbb L$ is an $(X,Y)$-ary term;
	\item if $t$ is an $(X,Y)$-ary term and $Z$ is an arity, then $t^Z$ is a $(Z\otimes X,Z\otimes Y)$-ary term;
	\item given $t_J=(t_j)_{j\in J}$, where $t_j$ is an $(X_j,Y_j)$-ary term, and $s$ an $(\sum_{j\in J} Y_j,W)$-ary term; then $s(t_J)$ is a $(\sum_{j\in J} X_j, W)$-ary term.
\end{enumerate}
The rules (2) and (4) are the usual starting point for terms in universal algebra expressing the fact that function symbols are terms and that we are allowed to take superposition. Rule (1) expresses variable declaration and change of variables within $\cv$. Finally, (3) witnesses another enriched aspect of our terms: the presence of a {\em power term} $t^Z$ captures the monoidal structure of $\cv$ as part of the syntactic rules defining terms. When $Z=\textstyle\sum_SI$ is a coproduct of the unit (as it happens when $\cv=\Set$), the power term $t^Z$ corresponds simply to taking the $S$-tuple $(t,\cdots,t)$.

If the base category $\cv$ is locally $\lambda$-presentable as a closed category~\cite{Kel82}, then we can talk about $\lambda$-ary languages and $\lambda$-ary terms just by restricting the arities $(X,Y)$ to be objects of $\cv_\lambda$, the full subcategory of $\cv$ spanned by the $\lambda$-presentable objects. We then define interpretation of terms in $\mathbb L$-structures:
$$ (X,Y)\tx{-ary term } t,\ \mathbb L\tx{-structure } A\ \ \mapsto\ \ t_A\colon A^X\to A^Y\ \tx{in } \cv. $$
{\em An equational $\mathbb L$-theory} $\mathbb E$ is defined as a family of equations $\{(s_j=t_j)\}_{j\in J}$ between terms of the same arity; its models are $\mathbb L$-structures satisfying the interpreted equations (Definition~\ref{satisf}). 

With this we can prove the characterization theorem below which further expands the results of \cite{Po}, \cite{BG} and~\cite{LP}. In particular we deduce a purely syntactic way to describe enriched categories of algebras of $\lambda$-ary monads on $\cv$. All notions appearing below that are not yet defined shall be introduced in due time.

\begin{theo:char}
	The following are equivalent for a $\cv$-category $\ck$: \begin{enumerate}
		\item $\ck\simeq\Mod(\mathbb E)$ for a $\lambda$-ary equational theory $\mathbb E$ on some $\lambda$-ary language $\mathbb L$;
		\item $\ck\simeq\tx{Alg}(T)$ for a $\cv$-monad $T$ on $\cv$ preserving $\lambda$-filtered colimits;
		\item $\ck$ is cocomplete and has a $\lambda$-presentable and $\cv$-projective strong generator $G\in\ck$;
		\item $\ck\simeq\lambda\tx{-Pw}(\ct^{\op},\cv)$ is equivalent to the $\cv$-category of $\cv$-functors preserving $\lambda$-small powers, for some $\cv_\lambda$-theory $\ct$.
	\end{enumerate}
\end{theo:char}

In Section~\ref{elimination-arities} we discuss which arities are really necessary to express models of equational theories. The main result of the section (Theorem~\ref{elimination}) explains why in the case of $\cv=\Pos,\Met,\omega$-$\CPO$ it is enough to consider terms with trivial output arity, and will be useful for the development of new specific examples, including for instance 2-categorical and simplicial universal algebra.

The second part of the paper is dedicated to proving enriched versions of Birkhoff's variety theorem. As we explain below, to obtain that we shall make some additional assumptions; these involve, for instance, a possibly more general notion of term. 

We shall see that every $(X,Y)$-ary term as defined above corresponds to a morphism $FY\to FX$ between the free $\mathbb L$-structures on the arities $X$ and $Y$. However, one cannot expect that, in general, every morphism of this form can be replaced by one as in \ref{terms}. For this reason we shall call {\em extended $(X,Y)$-ary term} any morphism of the form $FY\to FX$ in the $\cv$-category $\Str(\mathbb L)$ of $\mathbb L$-structures. These are the same as morphisms in the enriched Lawvere theory generated by $\Str(\mathbb L)$ and coincide with the {\em parametrized operations} of~\cite[3.2]{LP} (see Remark~\ref{parametrized-operation}).

Now, by allowing equations between extended terms, we can prove the enriched Birkhoff-type theorem below. The question of whether extended terms below can be replaced by (standard) terms remains open.

\begin{theo:birk}
	Let $\mathbb L$ be a $\lambda$-ary language for which in $\Str(\mathbb L)$ every strong epimorphism is regular. Then the full subcategories of $\Str(\mathbb L)$ closed under products, powers, subobjects, and $\cv$-split quotients are precisely the classes defined by equational $\mathbb L$-theories involving extended terms.
\end{theo:birk}

When $\cv=\Set$ the hypothesis of the theorem are satisfied for any language since the category $\Str(\mathbb L)$ is always regular~\cite[Remark~3.4]{AR}, so that regular and strong epimorphisms coincide. However, for a general $\cv$, whether or not $\Str(\mathbb L)$ satisfies the hypotheses above will depend on which arities are involved in the language $\mathbb L$ itself. We shall see several application of this in Appendix~\ref{more-birk}; in particular our result yields the one from~\cite{Bl}.

In the final Section~\ref{multi-sort} we explore the enriched analogue of multi-sorted universal algebra, introduced by Birkhoff and Lipson~\cite{BL}. As in the single-sorted case, the categorical treatment uses algebraic theories; that is, small categories with finite products whose objects define sorts. Then $S$-sorted equational theories correspond to finitary monads on $\Set^S$ (see for instance \cite[A.40]{AR3}). However, contrary to what one might initially think, the $S$-sorted universal algebra is not (single-sorted) enriched universal algebra over $\Set^S$; since algebras are sets $A$ equipped with $(X,Y)$-ary functions where $X,Y\in\Set^S$, and not $S$-sorted sets $A$. Thus, multi-sorted enriched universal algebra needs a separate treatment, which we establish in Section~\ref{multi-sort} with the main result being Theorem~\ref{multi-char}. Another approach to this problem is given by the recent paper \cite{P}. 

The topics covered in this paper can be further generalized in the directions of \cite{BG,LP0} where one considers a more general class of arities: instead of taking objects of $\cv$ one takes objects of some ambient $\cv$-category $\ck$. However, we preferred to keep the presentation as simple as possible, to provide a gentle introduction to this new topic. We further believe that this work will serve as a starting port for the development and interpretation of new fragments of logic in the context of enriched category theory. Including, for instance, relational languages and regular theories.

\section{Background notions}

\subsection{Enrichment}

As our base of enrichment we fix a symmetric monoidal closed category $\cv=(\cv_0,\otimes,I)$ with internal hom $[-,-]$. When talking about $\mathbb L$-structures (from Section~\ref{languages}) we will denote the internal hom as follows
$$A^X:=[X,A];$$
this is to give a more intuitive interpretation of arities and function symbols.

We assume $\cv$ to be locally $\lambda$-presentable as a closed category~\cite{Kel82}, for some fixed regular cardinal $\lambda$. This means that $\cv_0$ is locally $\lambda$-presentable and the full subcategory $(\cv_0)_\lambda$ spanned by the $\lambda$-presentable objects is closed under the monoidal structure of $\cv_0$. 

Every time we talk about limits and colimits in a $\cv$-category we assume them to be enriched~\cite{Kel82:book}. For the purposes of this paper we shall not use enriched {\em weighted limits} in full generality, but just conical limits and powers (as well as their duals: conical colimits and copowers). 

Conical limits are based on diagrams $H\colon \cd_\cv\to \ck$ out of a free $\cv$-category on a small ordinary one, and into a $\cv$-category $\ck$. The {\em (conical) limit} of such a diagram $H\colon\cd_\cv\to \ck$ is the data of an object $\lim H\in\ck$ together with a cone $\Delta (\lim H)\to H$ inducing an isomorphism
$$ \ck(A,\lim H)\cong [\cd_\cv,\cv](\Delta A,H) $$	
in $\cv$ for any $A\in\ck$; this, when it exists, coincides with the ordinary limit of $H$ in the underlying category $\ck_0$ of $\ck$ (see \cite{Kel82:book}). 

The {\em power} of an object $K\in\ck$ by $X\in\cv$ is the data of an object $K^X\in\ck$ together with a map $X\to\ck(K,K^X)$ inducing an isomorphism
$$ \ck(A,K^X)\cong [X,\ck(A,K)] $$	
in $\cv$ for any $A\in\ck$. 

For any set $S$, the coproduct $S\cdot I$ of copies of the monoidal unit $I$ is called a \textit{discrete} object of $\cv$. For every object $X$ of $\cv$ there is the induced morphism $\delta_X\colon X_0\to X$ where $X_0=\cv_0(I,X)\cdot I$ is discrete.

Given a small full subcategory $\cg$ of a $\cv$-category $\ck$, with inclusion $H\colon \cg\hookrightarrow \ck$, we say that $\cg$ is an (enriched) {\em strong generator} of $\ck$ if the $\cv$-functor $$\ck(K,1)\colon \ck\to[\cg^{\op},\cv]$$ is conservative. Then, following \cite{Kel82}, we say that a $\cv$-category $\ck$ is {\em locally $\lambda$-presentable} if it is cocomplete (all conical colimits and copowers exist) and has a strong generator $\cg$ made of $\lambda$-presentable objects (that is, $\ck(G,-)\colon \ck\to\cv$ preserves $\lambda$-filtered colimits for any $G\in\cg$). 

Finally, (orthogonal) factorization systems will make an appearance in Section~\ref{more-birk}. Following \cite{FK} we will say that a factorization system $(\ce,\cm)$ on a category $\ck$ is {\em proper} if every element of $\ce$ is an epimorphism and every element of $\cm$ a monomorphism. The factorization will be called {\em enriched}, in the sense of \cite{LW}, if the class $\ce$ is closed in $\cv^\to$ under all copowers (if $e\in\ce$ and $X\in\cv$, then $X\otimes e\in\ce$), or equivalently if $\cm$ is closed in $\cv^\to$ under all powers (if $m\in\cm$ and $X\in\cv$, then $[X,m]\in\cm$). 

\subsection{Pretheories, theories, and monads}

Some of our constructions will be related to those considered by Bourke and Garner in~\cite{BG}. In particular their notions of pretheories and theories will be relevant. 

\begin{defi}
	 By a {\em $\cv_\lambda$-pretheory} we mean the data of a $\cv$-category $\ct$ together with an identity-on-object $\cv$-functor $\tau\colon\cv_\lambda^{\op}\to\ct$.
\end{defi}

\begin{rem}
	In the original notation of \cite{BG} a $\cv_\lambda$-pretheory is actually obtained by taking the opposite of the $\cv$-category $\ct$ considered above. We have opted for this change of notation since we care more about the morphisms in $\ct$ rather than in $\ct^{\op}$.
\end{rem}

The $\cv$-category of (concrete) models of a $\cv_\lambda$-pretheory $(\ct,\tau)$ is defined by the pullback
\begin{center}
	\begin{tikzpicture}[baseline=(current  bounding  box.south), scale=2]
		
		\node (a0) at (0,0.8) {$\tx{Mod}(\ct)$};
		\node (a0') at (0.3,0.6) {$\lrcorner$};
		\node (b0) at (1.3,0.8) {$[\ct,\ \cv]$};
		\node (c0) at (0,0) {$\cv$};
		\node (d0) at (1.3,0) {$[\cv_\lambda^{\op},\cv]$};
		
		\path[font=\scriptsize]
		
		(a0) edge [right hook->] node [above] {} (b0)
		(a0) edge [->] node [left] {$U$} (c0)
		(b0) edge [->] node [right] {$[\tau,\cv]$} (d0)
		(c0) edge [right hook->] node [below] {$\cv(K,1)$} (d0);
	\end{tikzpicture}	
\end{center} 
in $\cv\tx{-}\bo{CAT}$; where $K\colon\cv_\lambda\hookrightarrow\cv$ is the inclusion.
Note that this pullback is also a bipullback since $[\tau,\cv]$ is a discrete isofibration. A model of $\ct$ is then an object $A$ of $\cv$ endowed with an extension of $A^{(-)}:=\cv(K-,A)\colon\cv_\lambda^{op}\to\cv$ to a $\cv$-functor $\hat{A}\colon\ct\to\cv$.

By a monad $T$ on $\cv$ we will always mean a $\cv$-monad $T\colon \cv\to\cv$; this is called
\textit{$\lambda$-ary} if $T$ preserves $\lambda$-filtered colimits.
Models of $\cv_\lambda$-pretheories are used in~\cite{BG} to characterize the $\cv$-categories of algebras of $\lambda$-ary monads on $\cv$. It is shown in particular that the forgetful $\cv$-functor $U\colon \tx{Mod}(\ct)\to \cv$ is {\em strictly} $\lambda$-ary monadic; meaning that it has a left adjoint and that the $\cv$-category of algebras of the induced monad is isomorphic to $\tx{Mod}(\ct)$. This is stronger than standard monadicity, which instead requires an equivalence of $\cv$-categories.

On the other hand, every $\lambda$-ary monad $T$ on $\cv$ uniquely identifies a $\cv_\lambda$-pretheory $(\ct,\tau)$ such that $\ct$ has $\lambda$-small powers and $\tau$ preserves them \cite[Section~4.4]{BG}. Those $\cv_\lambda$-pretheories satisfying this additional property are called {\em $\cv_\lambda$-theories}.

It turns out that, to obtain a $\cv_\lambda$-theory, it is enough to ask for the existence and preservation of just the $\lambda$-small powers of the unit $I\in\cv_\lambda$. Such characterization goes back to \cite{Po} (see \cite[Examples 44(iii) and (vi)]{BG}).

\begin{propo}\label{theory}
	An identity-on-objects $\cv$-functor $\tau\colon\cv_\lambda^{\op}\to \ct$ is a $\cv_\lambda$-theory if and only if for any $Z\in\cv_\lambda$
	$$
	\ct(Z,I)^X\cong\ct(Z,X) 
	$$
	$\cv$-naturally in $X\in\cv_\lambda^{\op}$; in other words, if $\ct(Z,-)$ preserves $\lambda$-small powers of $I$. 
\end{propo}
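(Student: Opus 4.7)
The plan is to verify both implications by computing representables, with the main content in the backward direction. For the forward implication, observe that in $\cv_\lambda^{\op}$ the power $I^X$ for $X\in\cv_\lambda$ coincides with $X$ itself, since it represents $[X,-]\colon\cv_\lambda\to\cv$. If $\tau$ is a $\cv_\lambda$-theory, then $\tau$ preserves this power; being identity-on-objects we have $\tau(I^X)=X$ and $\tau(I)^X=I^X$ in $\ct$, so $X$ is the power $I^X$ in $\ct$. Consequently
$$\ct(Z,X)\cong\ct(Z,I^X)\cong [X,\ct(Z,I)]=\ct(Z,I)^X,$$
$\cv$-naturally in $X\in\cv_\lambda^{\op}$.

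For the backward direction, assume the hypothesis and proceed in two steps. First, the $\cv$-natural iso $\ct(C,X)\cong [X,\ct(C,I)]$ (natural in $C\in\ct$) identifies $X\in\ct$ as the power $I^X$, via enriched Yoneda applied to the canonical comparison map induced by $\tau$. Second, for arbitrary $B\in\ct=\cv_\lambda$ and $X\in\cv_\lambda$, I would show that the object $X\otimes B\in\cv_\lambda$, viewed in $\ct$, is the power $B^X$ there. The key computation applies the hypothesis with $Y=X\otimes B$, uses the closed structure of $\cv$, and applies the hypothesis again with $Y=B$:
$$\ct(C,X\otimes B)\cong [X\otimes B,\ct(C,I)]\cong [X,[B,\ct(C,I)]]\cong [X,\ct(C,B)],$$
$\cv$-naturally in $C$. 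By enriched Yoneda this exhibits $X\otimes B$ as $B^X$ in $\ct$; since the same $X\otimes B$ is the power $B^X$ in $\cv_\lambda^{\op}$ (a copower in $\cv_\lambda$) and $\tau$ is identity-on-objects, $\tau$ preserves all $\lambda$-small powers, so $(\ct,\tau)$ is a $\cv_\lambda$-theory.

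The only technical subtlety is verifying that the chain of isomorphisms above is indeed $\cv$-natural in $C$ and coincides with the canonical comparison map $\ct(C,X\otimes B)\to [X,\ct(C,B)]$ induced by $\tau$, so that $X\otimes B$ is a genuine power in $\ct$ rather than just an abstractly-isomorphic representable. This is handled by tracing the canonical candidate through each of the three isomorphisms using $\cv$-functoriality of $\tau$ and the $\cv$-naturality of the hypothesis; no further assumptions on $\cv$ or $\ct$ are required.
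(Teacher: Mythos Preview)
Your argument is correct, but the route differs from the paper's. For the backward direction the paper does not verify directly that $\ct$ has $\lambda$-small powers preserved by $\tau$; instead it invokes a characterization from \cite{BG}: $\tau$ is a $\cv_\lambda$-theory if and only if for every $Z\in\cv_\lambda$ there exists $W\in\cv$ with $\ct(Z,\tau-)\cong[K-,W]$. Under the hypothesis one simply reads off $W=\ct(Z,I)$, and the proof is a three-line computation with no naturality check to perform.

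Your approach is more self-contained: you directly exhibit $X\otimes B$ as the power $B^X$ in $\ct$ via the chain $\ct(C,X\otimes B)\cong[X\otimes B,\ct(C,I)]\cong[X,[B,\ct(C,I)]]\cong[X,\ct(C,B)]$, using the closed structure of $\cv$ together with two applications of the hypothesis. This avoids the dependence on \cite{BG}, at the cost of the verification you flag at the end---that the composite isomorphism is the canonical comparison induced by $\tau$, so that $\tau$ genuinely preserves the power rather than merely sending it to an isomorphic object. That check is routine (it reduces to compatibility of the unit and associativity isomorphisms of the monoidal structure with the two comparison maps for $I^{X\otimes B}$ and $I^B$), but it is real work that the paper's citation sidesteps entirely.
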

\begin{proof}
	The necessity follows from the fact that if $\ct$ is a $\cv$-theory then $\tau\colon\cv_\lambda^{\op}\to\ct$ preserves powers by all $\lambda$-presentable objects. Conversely, assume that
	$\ct(Z,I)^X\cong\ct(Z,X)$ for every $\lambda$-presentable objects $X$ and $Z$. Recall that $\tau\colon \cv_\lambda^{\op}\to\ct$ is a $\cv$-theory in the sense of \cite{BG} if and only if for each $Z\in\cv_\lambda$ there exists $W\in\cv$ for which
	$$ \ct(Z,\tau-)\cong [K-,W]\colon \cv_\lambda^{\op}\to \cv, $$
in that case $W$ is the free $\ct$-model on $Z$. Now, by hypothesis we have
	\begin{align*}
		\ct(Z,\tau X)\cong \ct(Z,X)\cong \ct(Z,I)^X= [KX,\ct(Z,I)]
	\end{align*}
	where the last equality is simply a change of notation for the internal homs in $\cv$. It follows that $W=\ct(Z,I)$ exists for any $Z$ and hence $\ct$ is a $\cv$-theory. 
\end{proof}

\begin{rem}
	For every $\cv_\lambda$-pretheory $\tau\colon \cv_\lambda^{\op}\to\ct$ and every $Z\in\cv_\lambda$ we have the comparison morphism
	$$
	\gamma_Z\colon \ct(Z,X)\to\ct(Z,I)^X.
	$$
	Hence $\ct$ is a $\cv$-theory if and only if $\gamma_Z$ is an isomorphism for every $Z\in\cv_\lambda$.
\end{rem}

\section{Languages}\label{languages}

In this section we introduce two central notions of this paper; namely those of {\em language} and {\em structure}. For any language $\mathbb L$ we introduce the $\cv$-category $\Str(\mathbb L)$ whose underlying ordinary category has $\mathbb L$-structures as objects and morphisms of $\mathbb L$-structures as arrows. We shall then prove several properties about such $\cv$-categories.

Our notion of language was considered before in \cite{R0,R3,LP}; in \cite[5.1]{LP} it was referred to as a free-form signature.

\begin{defi}
	A single-sorted {\em (functional) language} $\mathbb L$ (over $\cv$) is the data of a set of function symbols $f\colon(X,Y)$ whose arities $X$ and $Y$ are objects of $\cv$.
	The language $\mathbb L$ is called {\em $\lambda$-ary} if all the arities appearing in $\mathbb L$ lie in $\cv_\lambda$.
\end{defi}

Since every language $\mathbb L$ has a small collection of function symbols, it is not restrictive to assume that it is $\lambda$-ary for some big enough $\lambda$. 

We naturally associate a notion of $\mathbb L$-structure to $\mathbb L$. This was also considered in \cite{R0,R3,LP}. 

\begin{defi}
	Given a language $\mathbb L$, an {\em $\mathbb L$-structure} is the data of an object $A\in\cv$ together with a morphism $$f_A\colon A^X\to A^Y$$ in $\cv$ for any function symbol $f\colon(X,Y)$ in $\mathbb L$.

	A {\em morphism of $\mathbb L$-structures} $h\colon A\to B$ is the data of a map $h\colon A\to B$ in $\cv$ making the following square commute
	\begin{center}
		\begin{tikzpicture}[baseline=(current  bounding  box.south), scale=2]
			
			\node (a0) at (0,0.8) {$A^X$};
			\node (b0) at (1,0.8) {$B^X$};
			\node (c0) at (0,0) {$A^Y$};
			\node (d0) at (1,0) {$B^Y$};
			
			\path[font=\scriptsize]
			
			(a0) edge [->] node [above] {$h^X$} (b0)
			(a0) edge [->] node [left] {$f_A$} (c0)
			(b0) edge [->] node [right] {$f_B$} (d0)
			(c0) edge [->] node [below] {$h^Y$} (d0);
		\end{tikzpicture}	
	\end{center} 
	for any $f\colon(X,Y)$ in $\mathbb L$.
\end{defi}

So far $\mathbb{L}$-structures and morphisms between them form just an ordinary category $\Str(\mathbb L)_0$. We shall now produce a $\cv$-category $\Str(\mathbb L)$ whose underlying ordinary category (as the name suggests) will be the one just introduced. This was also done in \cite{LP} following a different approach that provides the same result.

Consider the ordinary category $\cc(\mathbb L)^\lambda$ which has the same objects as $\cv_\lambda$ and whose morphisms are freely generated under composition by the function symbols of $\mathbb L$, so that $f\colon(X,Y)$ in $\mathbb L$ will have domain $X$ and codomain $Y$ in $\cc(\mathbb L)$. Let now $\cc(\mathbb L)_\cv^\lambda$ be the free $\cv$-category on $\cc(\mathbb L)^\lambda$; then we consider the pushout in $\cv\tx{-}\bo{Cat}$

\begin{center}
	\begin{tikzpicture}[baseline=(current  bounding  box.south), scale=2]
		
		\node (a0) at (0,0.8) {$|\cv_\lambda|$};
		\node (b0) at (1.1,0.8) {$\cc(\mathbb L)_\cv^\lambda$};
		\node (c0) at (0,0) {$\cv_\lambda^{op}$};
		\node (d0') at (0.92,0.15) {$\ulcorner$};
		\node (d0) at (1.1,0) {$\Theta_\mathbb L^\lambda$};
		
		\path[font=\scriptsize]
		
		(a0) edge [->] node [above] {$j$} (b0)
		(a0) edge [->] node [left] {$i$} (c0)
		(b0) edge [->] node [right] {$H_\mathbb L$} (d0)
		(c0) edge [->] node [below] {$\theta_\mathbb L^\lambda$} (d0);
	\end{tikzpicture}	
\end{center} 
where $|\cv_\lambda|$ is the free $\cv$-category on the set of objects of $\cv_\lambda$, and $i$ and $j$ are the identity on objects inclusions. It follows that $H_\mathbb L$ and $\theta_\mathbb L^\lambda$ are the identity on objects as well.

The $\cv$-functor $\theta_\mathbb L^\lambda$ defines a $\cv_\lambda$-pretheory whose $\cv$-category of models will be our $\cv$-category of $\mathbb L$-structures:

\begin{defi}
	The $\cv$-category $\Str(\mathbb L)$ on a $\lambda$-ary language $\mathbb L$ is defined as $\Mod(\Theta_\mathbb L^\lambda)$; that is, as the pullback
	\begin{center}
		\begin{tikzpicture}[baseline=(current  bounding  box.south), scale=2]
			
			\node (a0) at (0,0.8) {$\Str(\mathbb L)$};
			\node (a0') at (0.3,0.6) {$\lrcorner$};
			\node (b0) at (1.3,0.8) {$[\Theta_\mathbb L^\lambda,\cv]$};
			\node (c0) at (0,0) {$\cv$};
			\node (d0) at (1.3,0) {$[\cv_\lambda^{op},\cv]$};
			
			\path[font=\scriptsize]
			
			(a0) edge [right hook->] node [above] {} (b0)
			(a0) edge [->] node [left] {$U_\mathbb L$} (c0)
			(b0) edge [->] node [right] {$[\theta_\mathbb L^\lambda,\cv]$} (d0)
			(c0) edge [right hook->] node [below] {$\cv(K-,1)$} (d0);
		\end{tikzpicture}	
	\end{center} 
\end{defi}

An element of $\Str(\mathbb L)$ is then an object $A$ in $\cv$ endowed with an extension of $A^{(-)}:=\cv(K-,A)\colon\cv_\lambda^{op}\to\cv$ to a $\cv$-functor $\hat{A}\colon\Theta^\lambda_\mathbb L\to\cv$. We will see in Proposition~\ref{F-str} below that this is the same data as an $\mathbb L$-structure.

\begin{rem}
	Every $\lambda$-ary language $\mathbb L$ is $\kappa$-ary for any $\kappa\geq\lambda$; however the $\cv$-category $\Str(\mathbb L)$ is independent from the choice of such $\kappa$. Indeed, it is easy to see that for any $\kappa\geq\lambda$, where $\lambda$ is the smallest for which $\mathbb L$ is $\lambda$-ary, we have a pushout square
	\begin{center}
		\begin{tikzpicture}[baseline=(current  bounding  box.south), scale=2]
			
			\node (a0) at (0,0.8) {$\cv_\lambda^{\op}$};
			\node (b0) at (1.1,0.8) {$\Theta_\mathbb L^\lambda$};
			\node (c0) at (0,0) {$\cv_\kappa^{\op}$};
			\node (d0) at (1.1,0) {$\Theta_\mathbb L^\kappa$};
			\node (d0') at (0.92,0.15) {$\ulcorner$};
			\path[font=\scriptsize]
			
			(a0) edge [->] node [above] {$\theta_{\mathbb L}^\lambda$} (b0)
			(a0) edge [right hook->] node [left] {} (c0)
			(b0) edge [->] node [right] {} (d0)
			(c0) edge [->] node [below] {$\theta_{\mathbb L}^\kappa$} (d0);
		\end{tikzpicture}	
	\end{center} 
	in $\cv$-$\Cat$ induced by the definition of $\Theta_\mathbb L^\lambda$ and $\Theta_\mathbb L^\kappa$. (This also applies for $\kappa=\infty$, where $\cv_\infty=\cv$). Therefore, the square on the right in the diagram below is a pullback.
	\begin{center}
		\begin{tikzpicture}[baseline=(current  bounding  box.south), scale=2]
			
			\node (a0) at (0,0.8) {$\Str(\mathbb L)$};
			\node (a0') at (1.5,0.6) {$\lrcorner$};
			\node (b0) at (1.2,0.8) {$[\Theta_\mathbb L^\kappa,\cv]$};
			\node (c0) at (0,0) {$\cv$};
			\node (d0) at (1.2,0) {$[\cv_\kappa^{op},\cv]$};
			\node (e0) at (2.4,0.8) {$[\Theta_\mathbb L^\lambda,\cv]$};
			\node (f0) at (2.4,0) {$[\cv_\lambda^{op},\cv]$};
			
			\path[font=\scriptsize]
			
			(a0) edge [right hook->] node [above] {} (b0)
			(a0) edge [->] node [left] {$U_\mathbb L$} (c0)
			(b0) edge [->] node [left] {$[\theta_\mathbb L^\kappa,\cv]$} (d0)
			(c0) edge [right hook->] node [below] {} (d0)
			(b0) edge [->] node [left] {} (e0)
			(d0) edge [->] node [left] {} (f0)
			(e0) edge [->] node [right] {$[\theta_\mathbb L^\lambda,\cv]$} (f0);
		\end{tikzpicture}	
	\end{center} 
	As a consequence the square on the left is a pullback if and only if the larger square is, proving our claim. Justified by this, we will often omit the superscript $\lambda$ in $\Theta^\lambda_\mathbb L$.	
\end{rem}

\begin{propo}\label{F-str}
	Let $\mathbb L$ be a $\lambda$-ary language; then:
\begin{enumerate}
		\item the underlying category of $\Str(\mathbb L)$ has $\mathbb L$-structures as objects and maps of  $\mathbb L$-structures as morphisms;
		\item $\Str(\mathbb L)$ is a locally $\lambda$-presentable $\cv$-category;
		\item $U_\mathbb L\colon \Str(\mathbb L)\to \cv$ is a strictly monadic right adjoint which preserves $\lambda$-filtered colimits.
	\end{enumerate}
\end{propo}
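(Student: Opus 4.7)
The plan is to prove (1) by unraveling the pushout construction of $\Theta_{\mathbb L}^\lambda$, and then deduce (2) and (3) from the Bourke--Garner characterization of models of $\cv_\lambda$-pretheories recalled just before Proposition~\ref{theory}.

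For (1), I would first observe that by construction $\theta_\mathbb L^\lambda\colon \cv_\lambda^{\op}\to\Theta_\mathbb L^\lambda$ is identity-on-objects, so $\Theta_\mathbb L^\lambda$ is a genuine $\cv_\lambda$-pretheory and $\Str(\mathbb L)=\Mod(\Theta_\mathbb L^\lambda)$ makes sense. An object of $\Str(\mathbb L)$ is an $A\in\cv$ together with a $\cv$-functor $\hat A\colon\Theta_\mathbb L^\lambda\to\cv$ extending $A^{(-)}=\cv(K-,A)$. By the universal property of the pushout defining $\Theta_\mathbb L^\lambda$ in $\cv\tx{-}\bo{Cat}$, such an extension is the same data as a $\cv$-functor $\cc(\mathbb L)_\cv^\lambda\to\cv$ whose restriction along $j\colon|\cv_\lambda|\to\cc(\mathbb L)_\cv^\lambda$ sends each $X$ to $A^X$. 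Since $\cc(\mathbb L)_\cv^\lambda$ is the free $\cv$-category on the ordinary category $\cc(\mathbb L)^\lambda$, giving such a $\cv$-functor amounts to giving an ordinary functor $\cc(\mathbb L)^\lambda\to\cv_0$ with the prescribed action on objects; and $\cc(\mathbb L)^\lambda$ being freely generated under composition by the symbols of $\mathbb L$, this reduces to choosing an arrow $f_A\colon A^X\to A^Y$ in $\cv$ for each $f\colon(X,Y)$ of $\mathbb L$. This is exactly an $\mathbb L$-structure on $A$. A morphism $h\colon(A,\hat A)\to(B,\hat B)$ in $\Str(\mathbb L)$ is a $\cv$-natural transformation $\hat A\to\hat B$ whose restriction to $\cv_\lambda^{\op}$ is $h^{(-)}$ for some $h\colon A\to B$; the pushout description again reduces naturality to the single requirement that the square with $h^X$, $f_A$, $f_B$, $h^Y$ commute for each $f$, which is exactly a morphism of $\mathbb L$-structures.

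For (3), I appeal directly to the results of Bourke--Garner \cite{BG} recalled in the background: for any $\cv_\lambda$-pretheory $(\ct,\tau)$ the forgetful $\cv$-functor $U\colon\Mod(\ct)\to\cv$ is strictly $\lambda$-ary monadic. Applying this to $(\Theta_\mathbb L^\lambda,\theta_\mathbb L^\lambda)$ gives that $U_\mathbb L$ has a left adjoint, is strictly monadic, and preserves $\lambda$-filtered colimits (the latter because the induced monad is $\lambda$-ary, and $U_\mathbb L$ preserves whatever colimits the monad preserves and which exist in $\cv$). For (2), I would then invoke the standard fact that the $\cv$-category of algebras of a $\lambda$-ary $\cv$-monad on a locally $\lambda$-presentable $\cv$-category is again locally $\lambda$-presentable: $\Str(\mathbb L)$ is cocomplete (since colimits in $\cv$ lift through any monadic right adjoint creating them, using that $U_\mathbb L$ preserves reflexive coequalizers of $U_\mathbb L$-split pairs by monadicity and $\lambda$-filtered colimits from $\lambda$-arity), and the images $FX$ of $\lambda$-presentable objects $X\in\cv_\lambda$ under the left adjoint $F$ form a strong generator of $\lambda$-presentable objects (strongness transfers from the generator $\cv_\lambda$ of $\cv$ via the conservative $\lambda$-filtered-colimit-preserving $U_\mathbb L$, and $\lambda$-presentability follows from $\ck(FX,-)\cong\cv(X,U_\mathbb L-)$).

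The main conceptual obstacle is really in (1): one must be careful that the two-step pushout description (first free category on objects, then adding function symbols, then pushing out against $\cv_\lambda^{\op}$) correctly recovers the elementary definition of $\mathbb L$-structure, and in particular that there are no hidden commutativity constraints beyond the single square per function symbol. Once this unpacking is done cleanly, items (2) and (3) are essentially citations; the technical content lives entirely in the fact that $\theta_\mathbb L^\lambda$ is an identity-on-objects $\cv$-functor, which follows tautologically from the pushout construction and from $i,j$ both being identity-on-objects.
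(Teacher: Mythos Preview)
Your proposal is correct and follows essentially the same route as the paper: you unravel the pushout defining $\Theta_\mathbb L^\lambda$ to identify objects and morphisms of $\Str(\mathbb L)$ with $\mathbb L$-structures and their maps, and then cite Bourke--Garner for the monadicity and local presentability statements. The paper's proof is the same, merely packaging (2) and the rest of (3) as a direct reference to \cite[Section~5.3]{BG} rather than sketching the standard monadicity argument you outline.
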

\begin{proof}
	Conservativity of $U_\mathbb L$ will follow from $(1)$, while $(2)$ and the remainder of $(3)$ are proven in Section~5.3 of \cite{BG}. Thus we are left to prove $(1)$.
	
	By construction, an object of $\Str(\mathbb L)$ is an object $A$ of $\cv$ endowed with a $\cv$-functor $\hat{A}\colon \Theta_\mathbb L^\lambda\to\cv$ whose restriction along $\theta_\mathbb L$ is   $\cv(K-,A)=A^{(-)}$. Now, by definition of $\theta_\mathbb L$, to give the data above is equivalent to give an object $A\in\cv$ together with an ordinary functor $\tilde{A}\colon \cc(\mathbb L)\to \cv_0$ which acts on objects by sending $X$ to $A^X$, for any $X\in\cv_\lambda$. In particular, $\tilde{A}(I)=A$. Since $\cc(\mathbb L)$ is the category generated by the graph on the function symbols of $\mathbb L$; that is exactly the data of an $\mathbb L$-structure.
	
	The same argument applies to morphisms of the underlying category. Just notice that to give a morphism $\gamma\colon (A,\hat{A})\to(B,\hat{B})$ of $\mathbb L$-structures, is the same as giving a map $h\colon A\to B$ in $\cv$ (by fully faithfulness of $\cv(K-,I)$) together with a natural transformation $\eta'\colon \tilde{A}\to \tilde{B}$ such that $\eta'_X=h^X$. (Note that
	$U_\mathbb L(A,\hat{A})=A$ and $U_\mathbb L(\gamma)=h$.)
\end{proof}

\begin{rem}
	Given a language $\mathbb L$, we can define $\mathbb L$-structures in an arbitrary $\cv$-category $\ck$ with powers: an {\em $\mathbb L$-structure} is the data of an object $A\in\ck$ together with  a morphism $f_A\colon A^X\to A^Y$ in $\ck$ for any function symbol $f\colon(X,Y)$ in $\mathbb L$.
	
	A {\em morphism of $\mathbb L$-structures} $h\colon A\to B$ is determined by a map $h\colon A\to B$ in $\cv$ making the usual squares commute in $\ck$ for any $f\colon(X,Y)$ in $\mathbb L$.

	The $\cv$-category of $\mathbb L$-structures in $\ck$ is defined as the pullback
		\begin{center}
			\begin{tikzpicture}[baseline=(current  bounding  box.south), scale=2]
				
				\node (a0) at (0,0.8) {$\Str(\mathbb L)_\ck$};
				\node (a0') at (0.3,0.6) {$\lrcorner$};
				\node (b0) at (1.3,0.8) {$[\Theta_\mathbb L^\lambda,\ck]$};
				\node (c0) at (0,0) {$\ck$};
				\node (d0) at (1.3,0) {$[\cv_\lambda^{op},\ck]$};
				
				\path[font=\scriptsize]
				
				(a0) edge [right hook->] node [above] {} (b0)
				(a0) edge [->] node [left] {$U_\mathbb L$} (c0)
				(b0) edge [->] node [right] {$[\theta_\mathbb L,\cv]$} (d0)
				(c0) edge [right hook->] node [below] {$S$} (d0);
			\end{tikzpicture}	
		\end{center} 
	where $S$ is the transpose of the power functor $\cv_\lambda^{\op}\otimes \ck\to\ck $. Note that $S$ is fully faithful since it has a reflection $T\colon [\cv_\lambda^{op},\ck]\to\ck$ given by evaluating at $I$.
\end{rem}

\section{Terms}\label{terms-section}

We now turn to the notion of {\em $\mathbb L$-term} coming from a language $\mathbb L$. We first introduce an elementary notion of term (Definition~\ref{terms}) built up recursively from the function symbols of $\mathbb L$, the morphisms of $\cv$ (which provide an enriched version of the change of variables), and the closed monoidal structure of $\cv$. These will be essential for characterizing the $\cv$-categories of algebras of $\lambda$-ary monads (Theorem~\ref{char-single}). Then we introduce a more general notion; that of {\em extended term} (Definition~\ref{extended-term}) which we shall use in Section~\ref{birkhoff-section} to prove Birkhoff-type theorems for our languages.

For a language $\mathbb L$, a notion of term was considered in \cite{R0,R3}. We
enrich this concept by allowing power terms (3) below.

\begin{defi}\label{terms}
	
	Let $\mathbb L$ be a $\lambda$-ary language over $\cv$ and $\lambda\leq \kappa\leq\infty$.
	The class of \textit{$\kappa$-ary $\mathbb L$-terms} is defined recursively as follows:
	\begin{enumerate}
		\item Every morphism $f\colon Y\to X$ of $\cv_\kappa$ is an $(X,Y)$-ary term; 
		\item Every function symbol $f:(X,Y)$ of $\mathbb L$ is an $(X,Y)$-ary term;
		\item If $t$ is a $(X,Y)$-ary term and $Z$ is in $\cv_\kappa$, then $t^Z$ is a $(Z\otimes X,Z\otimes Y)$-ary term;
		\item Given $t_J=(t_j)_{j\in J}$, where $|J|<\kappa$ and $t_j$ is an $(X_j,Y_j)$-ary term, and $s$ an $(\sum_{j\in J} Y_j,W)$-ary term; then $s(t_J)$ is a $(\sum_{j\in J} X_j, W)$-ary term.
	\end{enumerate}
	Let $A$ be an $\mathbb L$-structure, then the {\em interpretation} of $\mathbb L$-terms is defined recursively as follows:
	\begin{enumerate}
		\item Every morphism $f\colon Y\to X$ of $\cv_\lambda$ is interpreted as $$f_A:= A^f\colon A^X\to A^Y;$$
		\item Every function symbol $f:(X,Y)$ of $\mathbb L$ is interpreted as the map $$f_A\colon A^X\to A^Y$$ given by the fact that $A$ is an $\mathbb{L}$-structure;
		\item If $t$ is a $(X,Y)$-ary term and $Z$ is an arity, then $t^Z$ is interpreted as the map 
		$$ t^Z_A\colon A^{Z\otimes X}\to A^{Z\otimes Y}$$
		given by composing $(t_A)^Z\colon (A^X)^Z\to (A^Y)^Z$ with the canonical isomorphisms $(A^X)^Z\cong A^{Z\otimes X}$ and $(A^Y)^Z\cong A^{Z\otimes Y}$;
		\item If $t_J=(t_j)_{j\in J}$ is formed by $(X_j,Y_j)$-ary terms, and $s$ is a $(\sum_{j\in J}Y_j,W)$-ary term, then 
		$s(t_J)$ is interpreted as the composite
		$$ A^{\sum_{j\in J} X_j}\cong \textstyle\prod_i A^{X_j}\xrightarrow{\ \prod_j(t_j)_A\ } \textstyle\prod_j A^{Y_j}\cong A^{\sum_{j\in J}Y_j}\xrightarrow{\ s_A\ } A^W.$$
	\end{enumerate}
\end{defi}	
	
\begin{rem}\label{simpler-language}$ $
	
(1) If we take $s=\id_W$ in (4), we get the term $t_J$.

(2)	Note that if $\cg$ generates $\cv_\kappa$ under $\kappa$-small coproducts, then we can assume the output arities of our enriched languages to lie in $\cg$. This is because an general $(X,Y)$-ary symbol $f$ in a language $\mathbb L$ can be replaced by a family of function symbols $f_j$ of arity $(X,Y_j)$ with $Y_j\in\cg$, $j\in J$ and $Y=\sum_j Y_j$.

Call $\mathbb L'$ the language obtained by $\mathbb L$ by applying this operations. Then $\mathbb L$-structures and $\mathbb L'$-structures are the same (universal property of (co)products) and, thanks to rule (4) above, $\mathbb L$-terms and $\mathbb L'$-terms are equivalent.
In more detail, $f$ is given by the composition of $f_J$ with the term given by the codiagonal $\nabla:\sum_j X\to X$.

(3) If $t$ is an $(X,Y)$-ary term and $Z=\sum_{j\in J}I$ where
$|J|<\kappa$ then the terms $t^Z$ and $(t_j)_{j\in J}$ where $t_j=t$
for every $j\in J$ have the same interpretation on every $\mathbb L$-structure.
\end{rem}	
	
In the first point of the following example we explain the correspondence, in the ordinary case, between our notions of structures and terms and those of universal algebra. In the remaining points we make connections with other works in the literature. See also Example~\ref{ex}.

\begin{exams}\label{extended}$ $
	{\setlength{\leftmargini}{1.6em}
		\begin{enumerate}
			\item  In universal algebra, a \textit{signature} $\Sigma$ is a set $\mathbb L$ of finitary function symbols. Any such signature $\Sigma$ is a language in our sense (over $\cv=\Set$) where $n$-ary function symbols are $(n,1)$-ary ones. Conversely, given a finitary language $\mathbb L$ in our sense; this corresponds to a signature in the ordinary sense by Remark~\ref{simpler-language} above. 
			
			Concerning terms; ordinarily these are formed from variables and function symbols by applying superpositions $f(t_1,\dots,t_m)$. In our setting, variables are dealt with in rule (1): a map $g\colon m\to n$ between finite sets, corresponds to the $m$-tuple of $n$-ary terms given by the projections $\pi_{g(i)}(x_1,\dots,x_n)$ to the $g(i)$-th variable. In particular, the identity on $n$ declares variables $(x_1,\dots,x_n)$. Then, rule (2) adds function symbols and (4) generates under superpositions. Terms from rule (3) are superfluous in this case since they correspond to tuples of the form $(t,\dots,t)$ which are already introduced in (4) --- it will follow from Proposition~\ref{G-powers} that this rule can be avoided from the beginning. 
			
			Thus, $(n,1)$-ary terms correspond to $n$-ary terms in the sense of universal algebra; while, in general, an $(n,m)$-ary term is a $m$-tuple of $n$-ary terms. The fact that only $(n,1)$-terms are necessary to do ordinary universal algebra will follow from Proposition~\ref{G-output}. 
			
			\item Let $\cv=\Pos$ be the cartesian closed category of posets and monotone maps. It is locally finitely presentable as a closed category and finitely presentable objects are finite posets. Since the terminal object is a generator, by Proposition~\ref{G-output}, also in this context it is enough to use $(X,1)$-ary terms, where $X$ is a finite poset.
			
			A {\em signature in context} from \cite[Definition~3.2]{AFMS} is the same as a finitary language $\mathbb L$ in our sense with function symbols of arity $(X,1)$. A {\em coherent $\mathbb L$-algebra} of \cite{AFMS} is just an $\mathbb L$-structure in our sense. 
			
			Terms in \cite{AFMS} are just terms from ordinary universal algebra: an $X$-ary term, for $X$ a finite poset, is defined as an $n$-ary ordinary term, where $n$ is the cardinality of $X$ (\cite[3.10]{AFMS}). Thus any $(X,1)$-term $t$ in our setting corresponds to a term $t_0$ of arity $X$ in their setting. While, given a term $s$ from \cite{AFMS}, if the rules applied to define $s$ preserve the order of the arities involved, then $s=t_0$ for some $(X,1)$-ary term $t$. 
			
			Note that if a signature $\mathbb L$ is not in context (that is, arities are discrete posets), we still have $(X,1)$-ary terms for every finite poset $X$ -- they appear as superpositions of $(X,X_0)$-ary terms given by $\delta_X\colon X_0\to X$
			with $(X_0,1)$-ary terms. Here $\delta_X$ is the identity from the discrete poset $X_0$ on the set $X$ to the poset $X$.
			
			
			\item Let $\Met$ be the symmetric monoidal closed category of generalized metric spaces 
			(distance $\infty$ is allowed) and nonexpanding maps 
			(\cite[2.2(1)]{AR1}). This is a locally $\aleph_1$-presentable category
			and $\aleph_1$-presentable objects are countable metric spaces. Also in this case, by Proposition~\ref{G-output}, it is enough to use $(X,1)$-ary terms.
			
			A {\em signature} $\mathbb L$ {\em in context} in \cite{MPP,MPP1,A} is the same as an $\aleph_1$-ary language $\mathbb L$ with function symbols of arity $(X,1)$. The situation is similar to (2): $\mathbb L$-algebras are $\mathbb L$-structures in our sense; the correspondence between our terms and those from \cite{MPP,MPP1,A} is similar to that for $\cv=\Pos$ (replace ordering with distance, and finite with countable). 
			
			
			\item  Let $\omega$-$\CPO$ be the cartesian closed category of posets with joins of non-empty $\omega$-chains and maps preserving joins of non-empty $\omega$-chains
			(see \cite[2.9]{ADV}); these maps are called continuous. This category is locally $\aleph_1$-presentable and $\aleph_1$-presentable objects are countable cpo's. 
			
			In \cite{ANR,ADV}, a signature is the same as our language $\mathbb L$ with function symbols of arity $(X,1)$ where $X$ is countable discrete (that is, a countable antichain). Our $\mathbb L$-structures coincide with their continuous algebras. Terms from our setting and from \cite{ANR,ADV} can be compared again as in the previous points.
			
			\item Let $\Ab$ be the symmetric monoidal closed category of abelian groups. Since $\mathbb Z$ is a generator, by Proposition~\ref{G-output}, we can again reduce to $(X,1)$-ary terms, where $X$ is finitely presented. 
			Finitely presented abelian groups are finite direct sums of copies of $\mathbb Z=\mathbb Z/0\mathbb Z$ and $\mathbb Z/n\mathbb Z$, for some $n>0$. Thus, it follows from Remark~\ref{simpler-language} that a finitary language $\mathbb L$ over $\Ab$ has function symbols of arity $(\oplus_{i=1}^k\mathbb Z/n_i\mathbb Z,\mathbb Z/m\mathbb Z)$, for $n_i,m\geq 0$, interpreted as morphisms
			$$ \bigoplus_{i=0}^n A_{n_i}\longrightarrow A_m, $$
			where $A_n=A^{\mathbb Z/n\mathbb Z}$ is the subgroup of $A$ spanned by those $a$ for which $na=0$.  
			
			Terms are generated by function symbols, variables, and ordinary superposition, plus the following two operations:
			
			\begin{enumerate}
				\item If $f$ and $g$ are $(X,Y)$-ary terms, then there is an $(X,Y)$-term $f+g$ which is interpreted as the sum of the interpretations of $f$ and $g$ (this is obtained using the substitution rule applied to $(f,g)$, the codiagonal of $X\oplus X$, and the diagonal of $Y\oplus Y$).
				\item If $f$ is a $(X,Y)$-ary term, then there is an $(X,Y)$-ary term $-f$ which is interpreted as the opposite of $f$ (apply the substitution rule to $f$ and the $(X,X)$-ary term $-\tx{id}$).
			\end{enumerate}
		It is easy to see that structures and terms over the empty language (over $\Ab$, but also over $\cv=R$-$\Mod$) can be interpreted within the framework of \cite{Pre03,Pre09}, but not in the equational fragment of that theory. This is because to express that a function symbol (or term) is defined out of $A_n$, rather than $A$, one needs to use implications between equations.
		\end{enumerate}
	}
\end{exams}

The terms just introduced will be enough for our characterization theorems of Section~\ref{equations} and will allow us to create connections with previous work in the literature, as well as to express various examples. However, we do not know whether they suffice in general for the Birkhoff variety theorems of Section~\ref{birkhoff-section}. That is why we shall also introduce extended terms.

Following \cite{BG}, a $\lambda$-ary language $\mathbb L$ yields the $\cv_\lambda$-theory $\tau_\mathbb L^\lambda\colon \cv_\lambda^{\op} \to\ct_\mathbb L^\lambda$ induced by $\Theta_{\mathbb L}^\lambda$. The $\cv$-category $(\ct_\mathbb L^\lambda)^{\op}$ can be obtained by taking the (identity-on-objects, fully faithful) factorization below 
$$ \cv_\lambda\xrightarrow{i.o.o.} (\ct_\mathbb L^\lambda)^{\op} \xrightarrow{f.f.} \Str(\mathbb L) $$
of the free $\cv$-functor $F\colon \cv\to \Str(\mathbb L)$ preceded by the inclusion $K\colon \cv_\lambda\hookrightarrow\cv$. 
	
Note also that there is an identity-on-objects functor $\Gamma _\mathbb L^\lambda\colon \Theta_\mathbb L^\lambda\to \ct_\mathbb L^\lambda$ such that 
\begin{center}
	\begin{tikzpicture}[baseline=(current  bounding  box.south), scale=2]
		
		\node (a0) at (0,-0.8) {$ \Theta_\mathbb L^\lambda$};
		\node (b0) at (1.3,-0.8) {$\ct_\mathbb L^\lambda$};
		\node (c0) at (0.65,0) {$\cv_\lambda^{\op}$};
		
		\path[font=\scriptsize]
		
		(a0) edge [->] node [below] {$\Gamma _\mathbb L^\lambda$} (b0)
		(a0) edge [<-] node [left] {$\theta_\mathbb L^\lambda$} (c0)
		(b0) edge [<-] node [right] {$\tau_\mathbb L^\lambda$} (c0);
	\end{tikzpicture}	
\end{center} 
commutes.
	
We allow this construction also for $\lambda=\infty$; in this case $(\ct_\mathbb L^\infty)^{\op}$ is obtained by taking the (identity-on-objects, fully faithful) factorization below 
$$ \cv\xrightarrow{i.o.o.} (\ct_\mathbb L^\infty)^{\op}\xrightarrow{f.f.} \Str(\mathbb L). $$
of $F\colon\cv\to\Str(\mathbb L)$.


\begin{defi}\label{extended-term}
	Let $\mathbb L$ be a $\lambda$-ary language, and $\lambda\leq\kappa\leq\infty$. An {\em extended $\kappa$-ary term $t:(X,Y)$ for $\mathbb L$} is a morphism $t\colon X\to Y$ in $\ct_\mathbb L^\kappa$. Equivalently, an extended $\kappa$-ary term $t:(X,Y)$ is just a morphism $t\colon FY\to FX$ in $\Str(\mathbb L)$.
\end{defi}

The interpretation of such $t:(X,Y)$ on an $\mathbb L$-structure $A$ is given by the composite
$$ t_A\colon A^X\xrightarrow{\cong}\Str(\mathbb L)(FX,A)\xrightarrow{\Str(\mathbb L)(t,A)} \Str(\mathbb L)(FY,A)\xrightarrow{\cong}A^Y.$$

\begin{rem}
	If $s:(X,Y)$ and $t:(Y,Z)$ are terms given as in point (1) and/or (2) of Definition~\ref{terms}, then they can be identified with morphisms of $\Theta_{\mathbb L}^\lambda$ (and hence, by the arguments above, of) $\ct_\mathbb L^\kappa$, and hence as extended terms. This is well-defined since their composition as morphisms of $\ct_\mathbb L^\kappa$ has the same interpretation as their composition as terms. Then, arguing recursively, a $\kappa$-ary term $t$ from rule (3) can be seen as a morphism in $\ct_\mathbb L^\kappa$; under this correspondence, the term $t^Z$ corresponds to the power of $t$ by $Z$ in $\ct_\mathbb L^\kappa$. Similarly, in rule (4), the term $s(t_I)$ and the extended term $s\circ (\prod_it_i)$ have the same interpretation. 
	It follows that every term can be naturally seen as an extended term.
\end{rem}

\begin{rem}\label{parametrized-operation}
	In \cite[Definition~3.2]{LP} a {\em parametrized operation}, restricted to our specific setting, is defined as a $\cv$-natural transformation
	$$ U(-)^X\longrightarrow U(-)^Y $$
	where as usual $U\colon \Str(\mathbb L)\to \cv$ is the forgetful $\cv$-functor and $X,Y$ are objects of $\cv$. By adjointness, this corresponds to a $\cv$-natural $\Str(\mathbb L)(FX,-)\to \Str(\mathbb L)(FY,-)$, which in turn is just a map $FY\to FX$ in $\Str(\mathbb L)$. Thus our extended terms and their parametrized operations coincide.
\end{rem}

\section{Enriched equational theories}\label{equations}

We can now introduce {\em equational theories} as collections of equalities $(s=t)$ between terms (or extended terms) of the same arity. Their models will characterize the $\cv$-categories of algebras of $\lambda$-ary monads.
 	
\begin{defi}\label{satisf}
	An \textit{equation} between extended terms is an expression of the form $$(s=t),$$ where $s$ and $t$ are extended terms of the same arity. We say that an $\mathbb L$-structure $A$ {\em satisfies} such equation if $s_A=t_A$ in $\cv$. 
	
	Given a set $\mathbb E$ of equations in $\mathbb L$, we denote by $\Mod(\mathbb E)$ the full subcategory of $\Str(\mathbb L)$ spanned by those $\mathbb L$-structures that satisfy all equations in $\mathbb E$; we call these $\mathbb L$-structures {\em models of $\mathbb E$} and call $\mathbb E$ an {\em extended $\infty$-ary equational theory}. If all the extended terms appearing in $\mathbb E$ are $\lambda$-ary, we call $\mathbb E$ a {\em extended $\lambda$-ary equational theory}.
	
	When $\mathbb E$ consists just of standard (recursively defined) terms, we drop the word {\em extended} and call $\mathbb E$ simply an {\em $\infty$-ary equational theory}, or {\em $\lambda$-ary equational} if the terms are all $\lambda$-ary.
\end{defi}

Using that we can see terms $s$ and $t$ as maps $s,t\colon FY\to FX$ in $\Str(\mathbb L)$, an $\mathbb L$-structure $A$ satisfies the equation $(s=t)$ if and only if $ \Str(\mathbb L)(s,A)=\Str(\mathbb L)(t,A) $ in $\cv$.

\begin{rem}\label{unenrich-sat}
	Note that our satisfaction is in a strong enriched sense: if $A$ satisfies the equation
	$(s=t)$ then $A$ satisfies the equations $(s^Z=t^Z)$ for all $Z$. The unenriched satisfaction of $(s=t)$ would instead mean that $\cv_0(I,s_A)=\cv_0(I,t_A)$, or equivalently $ \Str(\mathbb L)_0(s,A)=\Str(\mathbb L)_0(t,A) $ (seeing $s$ and $t$ as maps $s,t\colon FY\to FX$).
	Then, it is easy to see that enriched satisfaction of $(s=t)$ is equivalent to the unenriched satisfaction of $(s^Z=t^Z)$ for all $Z$. If $I$ is a generator in $\cv_0$ the enriched and unenriched satisfactions are the same.
\end{rem}

Let us now give some examples of equational theories built using terms as in Definition~\ref{terms}.

\begin{exam}
	Let $\cv$ be the cartesian closed category $\MGra$ of directed multigraphs (this is the presheaf category over the two parallel arrows). For any integer $n\geq 0$ let $[n]$ be the graph 
	$$ \{0\to 1\to \cdots \to n-1\}. $$
	So $[1]$ is the terminal object, and $[2]$ is the free edge. We now construct a language and an equational theory for small categories. Consider the language $\mathbb L$ given by function symbols:\begin{itemize}
		\item $I:([1],[2])$ for identities;
		\item $J_i:([2],[3])$ for ``pairing with identities'', $i=1,2$;
		\item $M:([3],[2])$ for the composition map;
		\item $M_1,M_2:([4],[3])$ for ``composing to the left/right''.
	\end{itemize}
	Then define a theory $\mathbb E$ with axioms:\begin{enumerate}
		\item $ (\pi_2 (J_1)= I( \pi_{cod}))$ and $ (\pi_1 (J_1)= \id)$, where $\pi_i :([3],[2])$ and $\pi_{cod}:([2],[1])$ are the terms corresponding respectively to the inclusion $[2]\to[3]$ of the i-th edge and to the codomain inclusion $[1]\to [2]$. 
		\item $ (\pi_1 (J_2)= I (\pi_{dom}))$  and $ (\pi_2(J_2)= \id)$, dual to the above.
		\item $ (M( J_1)=\id)$ and $(M( J_2)=\id)$.
		\item $(q( M_1)= (M, \id)( q_1) )$ and $(q( M_2)= (\id, M)( q_2))$, where $q,q_1,$ and $q_2$ are the terms corresponding to the maps $q\colon [2]+[2]\to [3]$, $q_1\colon [3]+[2]\to [4]$, and $q_1\colon [2]+[3]\to [4]$ obtained by gluing the codomain of the first component with the domain of the second.
		\item $(M (M_2)=M( M_1))$.
	\end{enumerate} 
	Given a model $C$ of $\mathbb E$, the map $I$ assigns an identity edge $1_c$ to any vertex $c$, and $M$ gives a composition rule for any composable pair of edges in $C$. Then the equation (1) says that $J_1\colon C^{[2]}\to C^{[3]}$ sends any edge $f\colon s\to t$ to the pair $(f,1_t)$; while (2)  says that $J_2$ sends $f$ to $(1_s,f)$. Then (3) says that the identities are neutral elements for the composition rule (on both sides).  The axioms in $(4)$ say that $M_1(f,g,s)= (M(f,g),s)$ and $M_2(f,g,s)=(f,M(g,s))$. Finally, (5) says that the composition rule is associative.
	
	It follows that $C$ is a model of $\mathbb E$ if and only if it is equipped with the structure of a category.
\end{exam}

\begin{exam}
	Consider the language $\mathbb L$ over $\Met$ with one $(2,2_1)$-ary function symbol $f$, where $2$ is a two-point metric space whose points have the distance $\infty$, and $2_1$ is a two-point metric space whose points have the distance $1$. Let the theory $\mathbb E$ be given by the equation
	$$ (f (\delta_{2_1})=\id_2) $$
	where $\delta_{2_1}$ is the $(2_1,2)$-ary term given by the bijection $2\to 2_1$ in $\Met$.
	
	Then, a model of $\mathbb E$ is a metric space $A$ together with a map $\delta_A\colon A\times A\to A^{2_1}$ such that $\delta_A(x,y)=(x,y)$. Such a map is well defined if and only if $d(x,y)\leq 1$ for any $(x,y)$ in $A$. Thus models of $\mathbb E$ are metric spaces with distance at most $1$. 
\end{exam}

\begin{exam}
	Let $\ch$ be a collection of morphisms in $\cv_\lambda$; note that any $h\colon X\to Y$ in $\ch$ defines a term $h:(Y,X)$ in any $\lambda$-ary language. Consider then the language $\mathbb L$ consisting of a function symbol $h^{-1}:(X,Y)$ for any $h\colon X\to Y$ in $\ch$, and define the theory $\mathbb E$ with axioms
	$$ (h (h^{-1})= \id)\ \ \text{ and } \ \ (h^{-1}( h)=\id) $$
	for any $h\in\ch$. Then a model of $\mathbb E$ is an object $A\in\cv$ together with maps $h^{-1}_A\colon A^X\to A^Y$ that are inverses of $A^h\colon A^Y\to A^X$. It follows that $\Mod(\mathbb E)\simeq \ch^\perp$ is the $\cv$-category of objects orthogonal with respect to $\ch$ in the enriched sense. (The previous example falls into this setting.)
	
If we define the theory $\mathbb E'$ with axioms
	$$ (h^{-1}( h)=\id) $$
	for any $h\in\ch$, then $\Mod(\mathbb E')$ is the $\cv$-category of algebraic $\ch$-injective objects (see \cite[2.3]{Bo}).
\end{exam}

\begin{exam}\label{kant}
	Consider the following language $\mathbb L$ over $\Met$ defined by:\begin{itemize}  
		\item a function symbol $c_\lambda\colon (1+1,1)$ for each $\lambda\in[0,1]$;
		\item a function symbol $r_\epsilon^\lambda\colon (2_\epsilon+1, 2_{\lambda\epsilon})$, for any $\epsilon>0$ and $\lambda\in[0,1]$.
	\end{itemize}
	Here, $2_\eps$ is a two-point metric space whose points have distance $\eps$ for $\eps>0$ and $2_0=1$ is the one-point metric space. 
	Then define a theory $\mathbb E$ with axioms the equalities in (a)--(d) of \cite[5.2]{FP} (giving convexity conditions) plus an axiom specifying that
	$$ r_\epsilon^\lambda (x,y,z) = (c_\lambda(x,y), c_\lambda(x,z)) $$
	for any $\epsilon>0$.
	
A model of $\mathbb E$ is given by a metric space $A$ together with operations $c_\lambda\colon A\times A\to A$ and maps $r_\epsilon^\lambda\colon A^{2_\epsilon}\times A\to A^{2_{\lambda\epsilon}}$ for any $\epsilon>0$ and $\lambda\in[0,1]$. The $c_\lambda$ are subject to the axioms of \cite[5.2]{FP} which make $A$ into a convex space. The last axioms plus the fact that $r_\epsilon^\lambda$ is a contraction, say that for any triple $(x,y,z)$ in $A$ with $d(x,y)\leq \epsilon$, then $d(c_\lambda(x,z),c_\lambda(y,z))\leq \lambda\epsilon$. It follows that the existence of $r_\epsilon^\lambda$ for any $\epsilon$ and $\lambda$ as above, is equivalent to the following inequality
	$$ d(c_\lambda(x,z),c_\lambda(y,z))\leq \lambda d(x,y) $$
	being true for any $x,y,z$ in $A$ and $\lambda\in[0,1]$. Convex spaces satisfying this condition are studied in \cite[5.4]{FP} where it is shown that the $\Met$-category of $\mathbb E$-models corresponds to that of algebras for the Kantorovich monad.
\end{exam}

Next we provide some examples where power terms are useful. For simplicity, we will often consider the composition $(t\circ s)$ of terms $s:(X,Y)$ and $t:(Y',Z)$ where $Y$ and $Y'$ are isomorphic and the isomorphism $i\colon Y\to Y'$ is clear from the context; then $(t\circ s)$ should be interpreted as $(t\circ i\circ s)$. For instance we do this whenever $Y'=Y\otimes I$, where $I$ is the unit, and the isomorphism is given by the monoidal structure on $\cv$. 

\begin{exam}
	Let $\cv=\bo{GAb}$ be the monoidal closed category of graded abelian groups. Let $P_i$ be the object with $\mathbb Z$ in degree $i$ and $(0)$ otherwise, so that for $i=1$ and any $A\in \bo{GAb}$ we have $(A^{P_1})_n=A_{n+1}$.
	
Consider the language with one function symbol $d:(P_1,I)$. Here we can construct the power term $d^{P_1}$ that has arity $(P_1\otimes P_1,P_1)$; it follows that the output arity of $d^{P_1}$ is the same as the input arity of $d$, so that we can form the new term $d (d^{P_1})$ as per rule $(4)$ of~\ref{terms}. Thus we can define the equational theory $\mathbb E$ given by the single equation 
	$$ d (d^{P_1})=0. $$
	Then $\Mod(\mathbb E)=\bo{DGAb}$ is the category of differentially graded abelian groups. Indeed, to give $d_A\colon A^{P_1}\to A$ in $\bo{GAb}$ is the same as giving a differential $d_A^{n+1}\colon A_{n+1}\to A_n$ for any $n$. Then $d_A$ satisfies the equation of $\mathbb E$ if and only if the composites of the differential are $0$.
\end{exam}

\begin{exam}
	Let $\cv=\bo{GAb}^+$ be the category of positively graded abelian groups, and $P_i$ as above (for $i\geq 0$); note that we have canonical isomorphisms $\sigma_{ij}\colon P_i\otimes P_j\to P_{i+j}$. Consider a graded ring $R=\bigoplus_{i\geq 0}R_i$ (meaning that $R$ is a ring and the multiplication satisfies $R_i\cdot R_j\subseteq R_{i+j}$). Consider the language $\mathbb L$ given by symbols $\hat r:(I,P_i)$ for any $i\geq 0$ and $r\in R_i$. Then define the theory $\mathbb E$ with equations
	$$ (\sigma_{ij} (\hat r^{P_j})( \hat s) =  \widehat{r\cdot s})$$
	for any $\hat r:(I,P_i)$ and $\hat s:(I,P_j)$. Then, the models of $\mathbb E$ are {\em graded $R$-modules}: that is, graded abelian groups $M$ together with associative scalar multiplications $R_i\oplus M_j\to M_{i+j}$. 
	Doing the same for $\cv=\bo{DGAb}$ we obtain the differentially graded $R$-modules.
\end{exam}

\begin{exam}
	Let $\cv=\Gra$ be the cartesian closed category of graphs; that is, sets $V$ (of vertices) equipped with a symmetric binary relation $E$. If $(x,y)\in E$ we say that $(x,y)$ is an edge. Morphisms $(V,E)\to (V',E')$ are mappings $V\to V'$ preserving edges. Recall that $(V,E)^{(V',E')}$ 
	has as vertices all maps $f\colon V'\to V$ and $(f,g)$ is an edge if and only if
	$$
	(x,y)\in E' \Rightarrow (fx,gy)\in E.
	$$
	Let $1$ be a graph with a single vertex and no edge. Then $(V,E)^1$ is the \textit{complete graph} $(V,V\times V)$ and $1\times (V,E)$ is the \textit{edgeless} graph $(V,\emptyset)$. The tensor unit $I$ is the graph with a single vertex and a single edge. Consider the language consisting of an $(I,I)$-ary function symbol $f$. Then the equational theory with
	$$
	(f^1=\id)
	$$
	gives as models graphs with a unary operation which is the identity on vertices.
\end{exam}

\begin{exam}
	Let $\cv=\Cat$ be the category of small categories with its cartesian closed structure, and let ${\bf 2}=\{ 0\to 1\}$ be the arrow category. Consider the language $\mathbb L$ with only one $(1,{\bf 2})$-ary function symbol $\sigma$, so that an $\mathbb L$-structure is the data of a small category $\cc$ together with $$\sigma_\cc\colon \cc\to\cc^{\bf 2}.$$ Consider now the two inclusions $i_0,i_1\colon 1\to \bf{2}$, and define (for simplicity of notation) the terms $S:= i_0\circ \sigma $ and $T:= i_1\circ \sigma $. Then an $\mathbb L$-structure is a small category $\cc$ together with a natural transformation $\hat\sigma\colon S\Rightarrow T\colon\cc\to\cc$.
	
	Let $\mathbb E$ be the theory with equations
	$$ (S=\tx{id})\ \ \text{ and } \ \  (T^{\bf 2}(\sigma)=\sigma (T)); $$
	then a model of $\mathbb E$ is a well-pointed endofunctor; that is, a functor $T\colon \cc\to\cc$ together with a natural transformation $\hat\sigma\colon 1_\cc\Rightarrow T$ such that $T\hat\sigma=\hat\sigma T$.
\end{exam}

\begin{exam}
	Let $\cv=\bo{SSet}$ the cartesian closed category of simplicial sets (which is locally finitely presentable). We denote by $J:=\Delta[1]\in\bo{SSet}$ the free 1-simplex; this comes together with the two boundary maps $j_0,j_1\colon 1\to J$. 
	
	Consider the language $\mathbb L$ with the following function symbols:\begin{itemize}
		\item $x_0,x_1:(0,1)$;
		\item $p:(0,J)$.
	\end{itemize}
	On this language we define the equational theory $\mathbb E$ with axioms
	$$( p ( j_i) = x_i )$$
	for $i=0,1$. Then a model of $\mathbb E$ is the data of a simplicial set $A$ together with two points (vertices) $x_0,y_0\in A$ and a path (edge) $p\colon J\to A$ between them.
	
	One can argue similarly by taking the $\cv$ to be the category $\bo{Gpd}$ of groupoids and $J=\{\cdot \cong\cdot\}$ as the interval object; in this case a path between two objects in a groupoid is simply an isomorphism between them. 
	
	Both $\bo{SSet}$ and $\bo{Gpd}$ are examples of categories where one can interpret {\em intensional type theory} by using such interval objects (see~\cite{AW09}); under this interpretation a model $(A,x_0,x_1,p$) of $\mathbb E$ above provides a type-theoretic proof $p$ of the fact that $x_0$ and $x_1$ are (intensionally) equal. This raises the question of whether it is possible to interpret more complex type-theoretic formulas (in $\bo{SSet},\bo{Gpd}$, or other categories that model type theory) within the framework of our enriched equations, providing a meaningful connection between the two theories.
\end{exam}

\subsection{Main results}

We now turn to study the main properties of the $\cv$-categories of models of equational theories. The result below can also be seen as a consequence of \cite[5.20]{LP}

\begin{propo}\label{eqations->monad}
	For any extended $\lambda$-ary equational theory $\mathbb E$ the $\cv$-category $\Mod(\mathbb E)$ is locally $\lambda$-presentable and the forgetful $\cv$-functor $U\colon \Mod(\mathbb E)\to \cv$ is $\lambda$-ary and strictly monadic.
\end{propo}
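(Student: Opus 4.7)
The plan is to package $\mathbb{E}$ as a $\cv_\lambda$-pretheory $\ct_\mathbb E$ for which $\Mod(\ct_\mathbb E)\cong\Mod(\mathbb E)$, and then to appeal to the pretheory result already used in Proposition~\ref{F-str}.

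Since $\mathbb{E}$ is $\lambda$-ary, each equation $(s_j=t_j)$ of $\mathbb E$ consists of a pair of parallel morphisms $s_j,t_j\colon X_j\to Y_j$ in the $\cv_\lambda$-theory $\ct_\mathbb L^\lambda$, with $X_j,Y_j\in\cv_\lambda$ (by Definition~\ref{extended-term}). Each such pair transposes to a pair of $\cv$-functors $2_{I}\rightrightarrows\ct_\mathbb L^\lambda$. I would then define $\ct_\mathbb E$ to be the coequalizer in $\cv$-$\Cat$ of the induced parallel pair $\sum_{j\in J}2_I\rightrightarrows\ct_\mathbb L^\lambda$, in the spirit of Step~2 of Section~\ref{recursive-extended}. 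Because the two parallel $\cv$-functors agree on objects, the coequalizer map $\ct_\mathbb L^\lambda\to\ct_\mathbb E$ is identity-on-objects, so the composite $\tau_\mathbb E\colon\cv_\lambda^{\op}\to\ct_\mathbb L^\lambda\to\ct_\mathbb E$ is identity-on-objects as well and defines a $\cv_\lambda$-pretheory.

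Next I would verify that $\Mod(\ct_\mathbb E)\cong\Mod(\mathbb E)$ as $\cv$-subcategories of $\Str(\mathbb L)$. By the universal property of the coequalizer in $\cv$-$\Cat$, a $\cv$-functor $\widetilde A\colon\ct_\mathbb E\to\cv$ whose restriction along $\tau_\mathbb E$ equals $A^{(-)}$ is precisely a model $\hat A\colon\ct_\mathbb L^\lambda\to\cv$ (equivalently, by Remark~\ref{F-Str1}, an $\mathbb L$-structure on $A$) satisfying $\hat A(s_j)=\hat A(t_j)$ for every $j$; by the definition of the interpretation of an extended term, this last condition is just satisfaction of $(s_j=t_j)$.

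Finally I would invoke the general fact cited in the proof of Proposition~\ref{F-str}(2)--(3), namely \cite[Section~5.3]{BG}: for any $\cv_\lambda$-pretheory the $\cv$-category of models is locally $\lambda$-presentable and its forgetful $\cv$-functor to $\cv$ is strictly $\lambda$-ary monadic. Applied to $(\ct_\mathbb E,\tau_\mathbb E)$, this delivers the desired conclusion. The only non-routine point is the identification $\Mod(\mathbb E)\cong\Mod(\ct_\mathbb E)$; once one matches each coequalizing condition with satisfaction of the corresponding equation, everything else reduces to pretheory machinery.
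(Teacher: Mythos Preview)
Your approach is correct and genuinely different from the paper's. The paper argues pointwise: for each equation $(s=t)$ it realizes $\Mod(s=t)$ as a pullback (in fact a bipullback) in the 2-category of locally $\lambda$-presentable $\cv$-categories, then obtains $\Mod(\mathbb E)$ as the intersection of these, invoking Bird's result that this 2-category has flexible limits. Monadicity is then checked by hand via creation of $U$-absolute coequalizers. You instead stay inside the pretheory formalism: by forming the coequalizer $\ct_\mathbb L^\lambda\to\ct_\mathbb E$ in $\cv\text{-}\Cat$ you produce a single $\cv_\lambda$-pretheory whose concrete models are exactly the $\mathbb L$-structures satisfying $\mathbb E$, and then the entire conclusion drops out of \cite[Section~5.3]{BG}. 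Your route is more economical and avoids the 2-categorical limit machinery; the paper's route is perhaps more transparent about why $\Mod(\mathbb E)$ inherits local presentability, and does not rely on the identification $\Mod(\ct_\mathbb L^\lambda)\cong\Str(\mathbb L)$. One small remark: the reference you give for that identification (Remark~\ref{F-Str1}) is slightly off; the precise statement you need---that every $\mathbb L$-structure extends uniquely to a model of $\ct_\mathbb L^\lambda$---is established at the end of Section~\ref{recursive-extended}, so you should cite that instead.
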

\begin{proof}
	We shall use that the 2-category of locally $\lambda$-presentable $\cv$-categories, continuous and $\lambda$-ary $\cv$-functors, and $\cv$-natural transformations, has all flexible limits \cite[Theorem~6.10]{Bird} and hence all bilimits (see for instance~\cite[Section~6]{La}); in particular it has wide bipullbacks.
	
	Consider an equation $(s=t)$ between extended $(X,Y)$-ary terms $s$ and $t$. Recall that $s$ and $t$ can be viewed as morphisms $\hat s,\hat t\colon FY\to FX$ in $\Str(\mathbb L)$, and evaluation at an $\mathbb L$-structure $A$ can be obtained by homming into $A$ (see after Definition~\ref{extended}); it follows that we have $\lambda$-ary $\cv$-functors
	$$ s_{(-)},t_{(-)}\colon \Str(\mathbb L)\to \cv^\to $$
	given, up to isomorphism, by $\Str(\mathbb L)(\hat s,-)$ and $\Str(\mathbb L)(\hat t,-)$. These send an $\mathbb L$-structure $A$ to $s_A,t_A\colon A^X\to A^Y$, as objects of $\cv^\to$. Since $s$ and $t$ have the same input and output arities, the two $\cv$-functors above form a co-fork with the projection $\cv^\to\to\cv\times\cv$; thus, they assemble into a $\lambda$-ary $\cv$-functor
	$$ (s,t)_{(-)}\colon \Str(\mathbb L) \to \cv^{\rightrightarrows}, $$	
	where $\rightrightarrows$ denotes the free $\cv$-category on a parallel pair of arrows. It follows that the $\cv$-category $\Mod(s=t)$ can be seen as the pullback below.
	\begin{center}
		\begin{tikzpicture}[baseline=(current  bounding  box.south), scale=2]
			
			\node (a0) at (0,0.8) {$\Mod(s=t)$};
			\node (a0') at (0.3,0.6) {$\lrcorner$};
			\node (b0) at (1.3,0.8) {$\cv^\to$};
			\node (c0) at (0,0) {$\Str(\mathbb L)$};
			\node (d0) at (1.3,0) {$\cv^{\rightrightarrows}$};
			
			\path[font=\scriptsize]
			
			(a0) edge [->] node [above] {} (b0)
			(a0) edge [->] node [left] {} (c0)
			(b0) edge [->] node [right] {$\Delta$} (d0)
			(c0) edge [->] node [below] {$(s,t)_{(-)}$} (d0);
		\end{tikzpicture}	
	\end{center} 
	Note that this is also a bipullback since $\Delta$ is a discrete isofibration. Therefore $\Mod(s=t)$ is locally $\lambda$-presentable and the inclusion into $\Str(\mathbb L)$ is continuous and $\lambda$-ary.
	
	Now, for a general equational theory $\mathbb E$, it follows that the $\cv$-category $\Mod(\mathbb E)$ is locally $\lambda$-presentable as a full subcategory of $\Str(\mathbb L)$, being the intersection (that is, a wide pullback) of full subcategories as above. Therefore the forgetful $\cv$-functor $U\colon \Mod(\mathbb E)\to \cv$ is continuous and $\lambda$-ary, and hence it has a left adjoint (since the domain is locally presentable). Finally, it is easy to see that $U$ strictly creates coequalizers for $U$-absolute pairs, making it strictly monadic.
\end{proof}

Conversely, we can describe $\cv$-categories of algebras as given by models of equational theories. We stress out that the equational theory that we construct below only involves the terms of Definition~\ref{terms}, not the extended ones. Therefore, this syntactically improves the result of \cite[5.24]{LP} where extended terms are used, and justifies our choice of terms.

\begin{propo}\label{monad->equations}
	Let $T\colon \cv\to\cv$ be a $\lambda$-ary monad. Then there exists a $\lambda$-ary equational theory $\mathbb E$ on a $\lambda$-ary language $\mathbb L$ together with an isomorphism $E\colon \tx{Alg}(T)\to \Mod(\mathbb E)$ making the triangle
	\begin{center}
		\begin{tikzpicture}[baseline=(current  bounding  box.south), scale=2]
			
			\node (a0) at (0,0.8) {$\tx{Alg}(T)$};
			\node (b0) at (1.3,0.8) {$\Mod(\mathbb E)$};
			\node (c0) at (0.65,0) {$\cv$};
			
			\path[font=\scriptsize]
			
			(a0) edge [->] node [above] {$E$} (b0)
			(a0) edge [->] node [left] {$U$} (c0)
			(b0) edge [->] node [right] {$U'$} (c0);
		\end{tikzpicture}	
	\end{center} 
	commute.
\end{propo}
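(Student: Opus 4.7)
My plan is to factor the argument through the $\cv_\lambda$-theory $\ct := \ct_T^\lambda$ associated to $T$ by the correspondence of Section~2 (following \cite{BG}), under which $\tx{Alg}(T) \cong \Mod(\ct)$ strictly over $\cv$. It will then suffice to exhibit a $\lambda$-ary language $\mathbb L$ and a $\lambda$-ary equational theory $\mathbb E$, whose equations are only between standard terms as in Definition~\ref{terms}, such that $\Mod(\mathbb E) \cong \Mod(\ct)$ strictly over $\cv$; composing the two isomorphisms then yields the required $E$ commuting with the forgetful $\cv$-functors to $\cv$.

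For the language, I would take $\mathbb L$ to contain one $(X,Y)$-ary function symbol $\hat f$ for every pair $X,Y \in \cv_\lambda$ and every element $f \colon I \to \ct(X,Y)$ of the underlying ordinary category; by Proposition~\ref{theory} these are in bijection with morphisms $Y \to TX$ in $\cv$, so $\mathbb L$ is a small set. For $\mathbb E$ I would impose the equations
\begin{enumerate}
\item[(a)] $\hat{\tau h} = h$ for each $h \colon Y \to X$ in $\cv_\lambda$, where the right-hand side is the term provided by rule~(1) of Definition~\ref{terms};
\item[(b)] $\hat{g \circ_{\ct} f} = \hat g \circ \hat f$ for each pair of composable morphisms in $\ct$, using rule~(4);
\item[(c)] $\hat{f^W} = \hat f^{\,W}$ for each $f$ in $\ct$ and each $W \in \cv_\lambda$, where $f^W$ is the $W$-power of $f$ in $\ct$ and $\hat f^{\,W}$ the power term from rule~(3).
\end{enumerate}
All three families are equations between standard terms, as required.

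The forward assignment $E \colon \tx{Alg}(T) \to \Mod(\mathbb E)$ is then immediate: a $T$-algebra corresponds under the BG equivalence to a $\cv$-functor $\hat A\colon\ct\to\cv$ with $\hat A\tau=A^{(-)}$, and interpreting $\hat f_A := \hat A(f)$ yields an $\mathbb L$-structure satisfying (a)--(c) by functoriality of $\hat A$ and preservation of $\lambda$-small powers. The nontrivial direction is the converse: given $A \in \Mod(\mathbb E)$ I must build $\hat A \colon \ct \to \cv$ with $\hat A \tau = A^{(-)}$. On objects I set $\hat A(X) := A^X$; by Proposition~\ref{theory} the isomorphism $\ct(X,Y) \cong \ct(X,I)^Y$ reduces the task to constructing the $\cv$-morphism $\alpha_X \colon TX = \ct(X,I) \to [A^X, A]$ for each $X \in \cv_\lambda$ and then powering by $Y$. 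For each $W \in \cv_\lambda$ and each point $h\colon W \to TX$ the $\mathbb L$-structure provides $\hat h_A\colon A^X\to A^W$, which transposes to $\tilde h_A\colon W\to[A^X,A]$; equations (a) and (b) together imply this assignment is natural in $W\in\cv_\lambda$, so defines a cocone over the canonical $\lambda$-filtered diagram of $\lambda$-presentables converging to $TX$, and local $\lambda$-presentability of $\cv$ delivers a unique $\alpha_X$ realizing it. Equation (c) guarantees that the resulting $\cv$-functor $\hat A$ preserves the $\lambda$-small powers of $\ct$ and interprets the symbols $\hat f$ of arity $(X,Y)$ with $Y \ne I$ correctly.

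The principal obstacle lies in this last step: converting the point-wise data supplied by the $\mathbb L$-structure into genuine $\cv$-morphisms on each hom-object, which is nontrivial because $I$ need not be a generator of $\cv_0$. The generous choice of $\mathbb L$ -- one function symbol per point of every hom of $\ct$ -- paired with the three families of equations is designed precisely to enable this promotion via the description of $TX$ as a $\lambda$-filtered colimit of $\lambda$-presentables.
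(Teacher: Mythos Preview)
Your language and equations are essentially identical to the paper's: both take one function symbol per morphism of the underlying category $\ct_0$ and impose the analogues of your (a)--(c); the paper also records an identity axiom $\overline{1_X}\circ\overline g=\overline g$, but as you implicitly note this is derivable from (a) and (b). The forward assignment $\tx{Alg}(T)\to\Mod(\mathbb E)$ is likewise the same.

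The divergence is in the converse direction, and here your argument has a genuine gap. You correctly build $\alpha_X\colon TX\to[A^X,A]$ from the cocone of transposes $\tilde h_A$ over $\cv_\lambda/TX$ (your naturality check using (a) and (b) is fine), and then set $\beta_{X,Y}:=(\alpha_X)^Y$ via the isomorphism $\ct(X,Y)\cong\ct(X,I)^Y$. What you do not address is the \emph{enriched composition axiom}: that the square
\[
\begin{array}{ccc}
\ct(Y,Z)\otimes\ct(X,Y) & \xrightarrow{\ \mu\ } & \ct(X,Z)\\[2pt]
\beta_{Y,Z}\otimes\beta_{X,Y}\big\downarrow\phantom{\beta_{Y,Z}\otimes\beta_{X,Y}} & & \phantom{\beta_{X,Z}}\big\downarrow\beta_{X,Z}\\[2pt]
[A^Y,A^Z]\otimes[A^X,A^Y] & \xrightarrow{\ \circ\ } & [A^X,A^Z]
\end{array}
\]
commutes in $\cv$. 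Your equation (b) only witnesses commutativity after applying $\cv_0(I,-)$, and since $I$ need not generate $\cv_0$ this does not suffice; equation (c) concerns power-preservation, not composition. Unwinding $\mu$ through the Kleisli description $\ct(X,Y)\cong\cv(Y,TX)$ and your colimit construction of $\alpha_X$ does \emph{not} make this automatic.

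The paper closes this gap differently: it observes that the data supplied by (a)--(c) is exactly that of an ordinary functor $\tilde A\colon\ct_0\to\cv_0$ with $\tilde A(X)=A^X$ which restricts to $A^{(-)}$ along $H_0$ and \emph{preserves the action by $\lambda$-small powers}, and then invokes \cite[9.2]{LT20} (cf.\ \cite{JK}) to promote such a power-action-preserving ordinary functor to a genuine $\cv$-functor $\hat A\colon\ct\to\cv$; the same citation handles morphisms. Your colimit construction of $\alpha_X$ is in effect the first step of reproving that lemma in this special case, but you stop short of the enriched functoriality verification. You should either invoke the cited result or carry that verification through explicitly.
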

\begin{proof}
	Let $T\colon \cv\to\cv$ be a $\lambda$-ary monad. We need to find a $\lambda$-ary equational theory $\mathbb E$ on a $\lambda$-ary language $\mathbb L$ together with an isomorphism $E\colon \tx{Alg}(T)\to \Mod(\mathbb E)$ that respects the forgetful $\cv$-functors.
	
	By \cite[2.4]{BG} we can find a $\cv_\lambda$-theory $H\colon\cv_\lambda^{\op}\to \ct$ for which $\tx{Alg}(T)$ is given by the pullback below.
	\begin{center}
		\begin{tikzpicture}[baseline=(current  bounding  box.south), scale=2]
			
			\node (a0) at (0,0.8) {$\tx{Alg}(T)$};
			\node (a0') at (0.3,0.6) {$\lrcorner$};
			\node (b0) at (1.3,0.8) {$[\ct,\cv]$};
			\node (c0) at (0,0) {$\cv$};
			\node (d0) at (1.3,0) {$[\cv_\lambda^{op},\cv]$};
			
			\path[font=\scriptsize]
			
			(a0) edge [right hook->] node [above] {} (b0)
			(a0) edge [->] node [left] {$U$} (c0)
			(b0) edge [->] node [right] {$[H,\cv]$} (d0)
			(c0) edge [right hook->] node [below] {$\cv(K-,1)$} (d0);
		\end{tikzpicture}	
	\end{center} 
	Such $H$ can be chosen to be the left part of the (identity on objects, fully faithful) factorization of 
	$$ \cv_\lambda^{\op}\xrightarrow{K^{\op}}\cv^{\op}\xrightarrow{F^{\op}} \tx{Alg}(T)^{\op}$$
	where $F$ is the left adjoint to $U$. In particular then we can assume that $\ct$ has and $H$ preserves $\lambda$-small powers, so that for any $X,Y\in\cv_\lambda$ the power of $Y$ by $X$ in $\ct$ is simply the (image through $H$ of the) tensor product $X\otimes Y$. 
	Under these assumptions on $H$, a $\cv$-functor $G\colon\ct\to \cv$ preserves $\lambda$-small powers if and only if $GH$ does. Therefore, since $\cv(K-,X)$ preserves $\lambda$-small powers for any $X\in\cv$, then $\tx{Alg}(T)$ is also defined by the pullback 
	\begin{center}
		\begin{tikzpicture}[baseline=(current  bounding  box.south), scale=2]
			
			\node (a0) at (0,0.8) {$\tx{Alg}(T)$};
			\node (a0') at (0.3,0.6) {$\lrcorner$};
			\node (b0) at (1.5,0.8) {$\lambda\tx{-Pw}[\ct,\cv]$};
			\node (c0) at (0,0) {$\cv$};
			\node (d0) at (1.5,0) {$\lambda\tx{-Pw}[\cv_\lambda^{op},\cv]$};
			
			\path[font=\scriptsize]
			
			(a0) edge [right hook->] node [above] {} (b0)
			(a0) edge [->] node [left] {$U$} (c0)
			(b0) edge [->] node [right] {$[H,\cv]$} (d0)
			(c0) edge [right hook->] node [below] {$\cv(K-,1)$} (d0);
		\end{tikzpicture}	
	\end{center} 
	where $\lambda\tx{-Pw}[\ca,\cv]$ is the full subcategory of $[\ca,\cv]$ spanned by those $\cv$-functors that preserve $\lambda$-small powers. To conclude it is enough to construct a language $\mathbb L$ and an equational $\mathbb L$-theory $\mathbb E$ for which $\Mod(\mathbb E)$ is also presented as the pullback above.
	
	Consider the language $\mathbb L$ defined by a function symbol $\overline f:(X,Y)$ for any morphism $f\colon X\to Y$ in $\ct$. Note that for any $g\colon Y\to X$ in $\cv_\lambda$ we have two different $(X,Y)$-ary terms given by $g$ (from the first rule) and $\overline{H(g)}$ (from the second).
	The $\mathbb L$-theory $\mathbb E$ is given by the following equations:\begin{enumerate}
		\item[(a)] $\overline f( \overline g)=\overline{fg}$, for any composable maps $f,g$ in $\ct$;
		\item[(b)] $\overline{1_X} (\overline g)=\overline g$ and $\overline g( \overline{1_Y})=\overline g$, for any $g\colon X\to Y$ in $\ct$;
		\item[(c)] $g=\overline{H(g)}$, for any morphism $g$ in $\cv_\lambda$;
		\item[(d)] $\overline{Z\otimes f} = \overline f^Z$ for any morphism $f\colon X\to Y$ in $\ct$ and $Z\in\cv_\lambda$; here $Z\otimes f\colon Z\otimes X\to Z\otimes Y$ is the power of $f$ by $Z$ in $\ct$.
	\end{enumerate}
	
	Now, to give an $\mathbb L$-structure satisfying axioms (a) and (b) is the same as giving an object $A$ of $\cv$ together with an ordinary functor $\tilde{A}\colon \ct_0\to\cv_0$ for which $\tilde{A}(X)=A^X$. Axiom (c) says that $\tilde{A}\circ H_0=A^{(-)}\colon (\cv_\lambda)_0^{op}\to\cv_0$. Finally, axiom (d) says that for any object $Z$ in $\cv_\lambda$ and morphism $f\colon X\to Y$ in $\ct$, the square below commutes,
	\begin{center}
		\begin{tikzpicture}[baseline=(current  bounding  box.south), scale=2]
			
			\node (a0) at (0,0.8) {$A^{Z\otimes X}$};
			\node (b0) at (1.4,0.8) {$A^{Z\otimes Y}$};
			\node (c0) at (0,0) {$(A^X)^Z$};
			\node (d0) at (1.4,0) {$(A^Y)^Z$};
			
			\path[font=\scriptsize]
			
			(a0) edge [->] node [above] {$\tilde{A}(Z\otimes f)$} (b0)
			(a0) edge [->] node [left] {$\cong$} (c0)
			(b0) edge [->] node [right] {$\cong$} (d0)
			(c0) edge [->] node [below] {$\tilde{A}(f)^Z$} (d0);
		\end{tikzpicture}	
	\end{center}
	where the vertical maps are the natural comparison isomorphisms. Since the analogous commutativity property holds in the first variable (that is, $\tilde{A}(h\otimes X)\cong \tilde{A}(X)^h$ for $h\in\cv_\lambda$) because $\tilde{A}$ restricts to $A^{(-)}$, this means that $\tilde{A}$ preserves the action defined by taking $\lambda$-small powers, up to coherent natural isomorphism. Therefore, by the infinitary version \cite[9.2]{LT20} (see also \cite{JK}), to give a model of $\mathbb E$ is the same as giving a $\cv$-functor $\hat{A}\colon\ct\to\cv$ which preserves $\lambda$-small powers and that restricts to $A^{(-)}\colon\cv_\lambda^{op}\to\cv$. Similarly, since a morphism of $\mathbb L$-structures is determined by a map $h\colon A\to B$ in $\cv$, if $A$ and $B$ are models of $\mathbb E$ then $h$ induces an ordinary natural transformation $\eta\colon \tilde{A}\to \tilde{B}$ defined by $\eta_X=h^X$. This transformation is clearly compatible with the action given by $\lambda$-small powers; thus (again by \cite[9.2]{LT20}) it is actually a $\cv$-natural transformation $\bar \eta\colon \hat{A}\to \hat {B}$. As a consequence $\Mod(\mathbb E)_0$ is a pullback
	\begin{center}
		\begin{tikzpicture}[baseline=(current  bounding  box.south), scale=2]
			
			\node (a0) at (0,0.8) {$\Mod(\mathbb E)_0$};
			\node (a0') at (0.3,0.6) {$\lrcorner$};
			\node (b0) at (1.55,0.8) {$\lambda\tx{-Pw}[\ct,\cv]_0$};
			\node (c0) at (0,0) {$\cv$};
			\node (d0) at (1.55,0) {$\lambda\tx{-Pw}[\cv_\lambda^{op},\cv]_0$};
			
			\path[font=\scriptsize]
			
			(a0) edge [right hook->] node [above] {} (b0)
			(a0) edge [->] node [left] {$U'_0$} (c0)
			(b0) edge [->] node [right] {$[H,\cv]_0$} (d0)
			(c0) edge [right hook->] node [below] {$\cv(K-,1)_0$} (d0);
		\end{tikzpicture}	
	\end{center} 
	of ordinary categories. Since the top horizontal arrow preserves the action given by taking powers (these are defined pointwise both in $\Mod(\mathbb E)$ and $\lambda\tx{-Pw}[\ct,\cv]$), then it extends to an actual $\cv$-functor preserving powers. Finally, since all the other $\cv$-functors involved preserve such powers, then the pullback of ordinary categories is actually a pullback of enriched categories.
\end{proof}

Below, we will say that an object $G$ in a $\cv$-category $\ck$ is {\em $\cv$-projective} if $\ck(G,-)$ preserves coequalizers of $\ck(G,-)$-split pairs.

We now put together the results above in the following characterization theorem. The real improvement of this theorem, with rest to results already known in the literature, is that it is enough to consider the recursively generated terms of Definition~\ref{terms}, rather than having to go all the way to the less satisfactory notion of extended term.

\begin{theo}\label{char-single}
	The following are equivalent for a $\cv$-category $\ck$: \begin{enumerate}
		\item $\ck\simeq\Mod(\mathbb E)$ for an extended $\lambda$-ary equational theory $\mathbb E$;
		\item $\ck\simeq\Mod(\mathbb E)$ for a $\lambda$-ary equational theory $\mathbb E$;
		\item $\ck\simeq\tx{Alg}(T)$ for a $\lambda$-ary monad $T$ on $\cv$;
		\item $\ck$ is cocomplete and has a $\lambda$-presentable and $\cv$-projective strong generator $G\in\ck$;
		\item $\ck\simeq\lambda\tx{-Pw}(\ct,\cv)$ is equivalent to the $\cv$-category of $\cv$-functors preserving $\lambda$-small powers, for some $\cv_\lambda$-theory $\ct$.
	\end{enumerate}
\end{theo}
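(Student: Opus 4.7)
\emph{Plan.} The plan is to establish the cycle $(2)\Rightarrow(1)\Rightarrow(3)\Rightarrow(2)$, and then separately handle $(3)\Leftrightarrow(4)$ and $(3)\Leftrightarrow(5)$. Most of this amounts to invoking results already developed in this section or cited from the literature, so I will focus on laying out the dependencies rather than reproving them.

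The implication $(2)\Rightarrow(1)$ is immediate: as remarked after Definition~\ref{extended-term}, every term in the sense of Definition~\ref{terms} can be regarded as an extended term with the same interpretation, so any $\lambda$-ary equational theory in the restricted sense of (2) is a fortiori a $\lambda$-ary equational theory in the sense of (1). For $(1)\Rightarrow(3)$ I would cite Proposition~\ref{eqations->monad}: given a $\lambda$-ary equational theory $\mathbb E$, the forgetful $\cv$-functor $U\colon \Mod(\mathbb E)\to\cv$ is strictly $\lambda$-ary monadic, so $\Mod(\mathbb E)\simeq \tx{Alg}(T)$ for the $\lambda$-ary $\cv$-monad $T=UF$ induced by $U$. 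The implication $(3)\Rightarrow(2)$ is exactly the content of Proposition~\ref{monad->equations}, which constructs a $\lambda$-ary equational theory whose equations are built only from the terms of Definition~\ref{terms} and whose $\cv$-category of models is isomorphic to $\tx{Alg}(T)$ over $\cv$. The refinement from (1) to (2) is the genuinely new content of the theorem, and it is already packaged in Proposition~\ref{monad->equations}.

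For $(3)\Leftrightarrow(4)$ I would run the standard enriched monadicity argument. Given a $\lambda$-ary $\cv$-monad $T$ with free--forgetful adjunction $F\dashv U\colon \tx{Alg}(T)\to\cv$, set $G:=FI$; then $\tx{Alg}(T)(G,-)\cong U$, so $G$ is $\lambda$-presentable because $U$ preserves $\lambda$-filtered colimits, $\cv$-projective because $U$ creates coequalizers of $U$-split pairs by monadicity, and a strong generator because $U$ is conservative. Conversely, given (4), the $\cv$-functor $V:=\ck(G,-)\colon \ck\to\cv$ has left adjoint $G\otimes-$ because $\ck$ is cocomplete, is conservative by strong generation, preserves $\lambda$-filtered colimits by $\lambda$-presentability of $G$, and preserves coequalizers of $V$-split pairs by $\cv$-projectivity. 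Beck's enriched monadicity theorem then gives (3) with a $\lambda$-ary monad. For $(3)\Leftrightarrow(5)$ I would cite Power~\cite{Po} in the finitary case and the Bourke--Garner correspondence~\cite{BG} in general: one direction takes the (identity-on-objects, fully faithful) factorization of $\cv_\lambda\hookrightarrow \cv\to\tx{Alg}(T)$ to obtain a $\cv_\lambda$-theory $\ct$ with $\tx{Alg}(T)\simeq \lambda\tx{-Pw}(\ct,\cv)$, and the other observes that $\lambda\tx{-Pw}(\ct,\cv)$ is strictly $\lambda$-ary monadic over $\cv$ via evaluation at $I$.

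The only real obstacle in the proof is the step $(3)\Rightarrow(2)$, namely the elimination of extended terms in favour of the simpler recursive syntax of Definition~\ref{terms}; everything else is either formal or an established citation. That step is handled by the careful choice of language and axioms in Proposition~\ref{monad->equations} (axioms (1)--(4) there, in particular the power-term axiom $\overline{Z\otimes f}=\overline f^Z$ that makes rule~(3) of Definition~\ref{terms} suffice), so with that proposition in hand the present theorem reduces to routine bookkeeping.
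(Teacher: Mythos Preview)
Your proposal is correct and follows essentially the same route as the paper: both rely on Proposition~\ref{eqations->monad} and Proposition~\ref{monad->equations} for the interplay between (1), (2), and (3), on the enriched Beck monadicity theorem for the equivalence with (4) via $G=FI$ and $\ck(G,-)$, and on the Power/Bourke--Garner correspondence for (5). The only cosmetic difference is the ordering of the cycle: the paper runs $(3)\Rightarrow(2)\Rightarrow(1)\Rightarrow(4)\Rightarrow(3)$ and then treats (5) separately, whereas you close the loop $(2)\Rightarrow(1)\Rightarrow(3)\Rightarrow(2)$ first and then attach $(3)\Leftrightarrow(4)$ and $(3)\Leftrightarrow(5)$; the ingredients are identical.
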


The equivalence $(5)\Leftrightarrow(3)$ was first shown in the finitary setting by Power~\cite{Po}; the infinitary version follows from the monad theory correspondence of Bourke and Garner~\cite{BG}. While $(1)\Leftrightarrow(3)$ was shown as \cite[5.26]{LP} and the unenriched version of $(1)\Leftrightarrow(4)$ appears in~\cite{AR}.

\begin{proof}
	$(3)\Rightarrow(2)$ is Proposition~\ref{monad->equations} and $(2)\Rightarrow(1)$ is trivial. For $(1)\Rightarrow(4)$, note that $\ck$ is locally $\lambda$-presentable and the forgetful $U_\mathbb E\colon \ck\simeq\Mod(\mathbb E)\to \cv$ is continuous, $\lambda$-ary. Thus $U_\mathbb E$ has a left adjoint $L$ whose value at $I$ gives an object $G:=LI\in\ck$ for which $U_\mathbb E\cong\ck(G,-)$. Since $U_\mathbb E$ is conservative, $\lambda$-ary, and preserves $U_\mathbb E$-split coequalizers (being monadic), it follows that $G$ has the desired properties.
	
	$(4)\Rightarrow(3)$. Note that the $\cv$-category $\ck$ is locally $\lambda$-presentable and that $$U_\ck:=\ck(G,-)\colon\ck\to\cv$$ is (by hypothesis) continuous, $\lambda$-ary, and preserves coequalizers of $U$-split pairs. Thus $U_\ck$ has a left adjoint and is $\lambda$-ary monadic by the monadicity theorem.
	
	$(5)\Leftrightarrow(3)$. This is given by (the infinitary version of) \cite[Example~44.(vi)]{BG}.
\end{proof}

\begin{rem}\label{products}
	As it was already explained in~\cite{Po}, in the enriched context we need to ask for preservation of $\lambda$-small powers, instead of $\lambda$-small products. This is because the $\cv$-functor $\cv(K,1)\colon \cv\to [\cv_\lambda^{\op},\cv]$ restricts to an equivalence
	$$\cv\simeq\lambda\tx{-Pw}(\cv_\lambda^{\op},\cv)$$
	whose inverse is obtained by sending $F\colon \cv_\lambda^{\op}\to\cv$ to $F(I)$. Note, however, that every $F\colon \cv_\lambda^{\op}\to \cv$ preserving $\lambda$-small powers also preserves $\lambda$-small products; indeed, every such $F$ is of the form $F(X)=A^X$, and this preserves $\lambda$-small products.
	
	As a consequence, for any $\cv_\lambda$-theory $\tau\colon\cv_\lambda^{\op}\to \ct$, every $\cv$-functor $\ct\to\cv$ preserving $\lambda$-small powers also preserves $\lambda$-small products (since, by \cite{BG}, $\tau$ always preserves all $\lambda$-small limits in $\cv_\lambda^{op}$).
\end{rem}

\begin{exams}\label{ex}
	{\setlength{\leftmargini}{1.6em}
		\begin{enumerate}
			\item 	Over $\cv=\bo{Pos}$, our models of equational theories correspond to the varieties of ordered (coherent) algebras of \cite{AFMS}. This can be seen as a consequence of the characterizations above (that they also obtain with their language); however, there is a much deeper correlation between our approaches (see \cite[4.11(1)]{R3}).
			
			Following Example~\ref{extended}(2), a signature $\mathbb L$ in context in \cite[3.2]{AFMS} is the same as our finitary language with function symbols of arity $(X,1)$ and coherent $\mathbb L$-algebras of \cite{AFMS} are $\mathbb L$-structures in our sense. With regards to formulas, in \cite[3.15]{AFMS} one is allowed to consider {\em inequations} of the form $(s\leq t)$ for $s,t$ of arity $(X,1)$. These can be interpreted as equations in our language by adding a new function symbol $q$ of arity $(X,\bf 2)$ --- where $\bf 2$ is the two element chain $\{0\to 1\}$. Indeed, their $(s\leq t)$ is then equivalent to our equations
			$$ (i_1( q)=s)\ \text{ and }\ (i_2( q)=t)$$
			where $i_0,i_1\colon 1\to \bf 2$ are the two inclusions.
			
			Conversely, given a language $\mathbb L$ over $\Pos$, to interpret an $(X,Y)$-ary symbol $f$ from $\mathbb L$, is the same as to have the interpretation of a family $f_y:(X,1)$, for $y\in Y$, satisfying inequations $f_y\leq f_y'$ for any $y\leq y'$ in $Y$. Thus, every $\bo{Pos}$-category $\Mod(\mathbb E)$ of $\mathbb E$-models has a clear interpretation as a variety of ordered algebras.
			Finally, within our terms we are allowed to take powers by an arity $Z$; since $\bo{Pos}(1,-)$ is faithful, these can be avoided by Corollary~\ref{elimin-powers}.  
			
			\item Over $\cv=\Met$, our models of $\aleph_1$-ary equational theories correspond to the $\omega_1$-varieties of quantitative algebras of \cite{MPP,MPP1,A}. This correlation was observed in \cite[4.11(2)]{R3} and, again, it follows from Corollary~\ref{elimin-powers}. Recall that $\Met$ is only locally $\aleph_1$-presentable.
			One proceeds like in (1) but, instead of inequations, one has \textit{quantitative equations} $(s=_\eps t)$ where $\eps> 0$. These can be seen as equations using a function symbol $q$ of arity $(X,2_\eps)$ where $2_\eps$ is from Example~\ref{kant}. Indeed, $(s=_\eps t)$ is then equivalent to our equations
			$$ (i_1 (q)=s)\ \text{ and }\ (i_2 ( q)=t)$$
			where $i_0,i_1\colon 1\to 2_\eps$ are the two inclusions.
			
			Conversely, given a language $\mathbb L$ over $\Met$, to interpret an $(X,Y)$-ary symbol $f$ from $\mathbb L$, is the same as to have the interpretation of a family $f_y:(X,1)$, for $y\in Y$, satisfying quantitative equations $f_y=_\eps f_y'$ for any $y,y'\in Y$ such that $d(y,y')\leq\eps$. A concrete example is Example~\ref{kant}. 
			Again, since $1$ is a generator, power terms can be avoided by Corollary~\ref{elimin-powers}. 
			
			\item Over $\cv=\omega$-$\CPO$, our models of equational theories include the varieties of continuous algebras of \cite{ANR}. In \cite{ANR,ADV}, a signature $\Sigma$ is the same as our language with function symbols of arity $(X,1)$ where $X$ is a countable antichain. Except standard terms, they allow countable joins $\bigvee_{n<\omega} t_i$ of terms. But their interpretation is tailored such that $t_0\leq t_1\leq\cdots\leq t_n\leq\cdots$. We express $s\leq t$ in the same way as in (1): that is, by adding new function symbols of arity $(X,\bf 2)$. To express joins of countable terms, we add a new function symbol $q$ of arity $(X,\omega +1)$. Then $t=\bigvee_{n<\omega}t_n$ is equivalent to our equations
			$$ (i_n (q)=t_n), \text{\ for\ } n<\omega, \text{ and } (i_\omega( q)=t)$$
			where $i_n\colon 1\to \omega+1$ correspond to $n\leq\omega$.
			
			Conversely, let $\mathbb L$ be a language whose function symbols have countable antichains as the input arities. Then its function symbols of arities $(X,1)$ form a signature from \cite{ANR,ADV}. An $(X,Y)$-ary function symbol $f$ from $\mathbb L$ is interpreted as the family $f_y:(X,1)$, for $y\in Y$, satisfying 
			$$
			f_{\vee y_i}=\bigvee f_{y_i}
			$$
			for $y_0\leq y_1\leq\cdots y_n\leq\cdots$.
			Since $1$ is a generator, power terms can be avoided by Corollary~\ref{elimin-powers}.  
		\end{enumerate}
	}
\end{exams}

\begin{exam}
	The $\Met$-category $\Ban$ of Banach spaces is $\aleph_1$-ary monadic over $\Met$ by \cite{R2}; hence is the $\Met$-category of models of a $\aleph_1$-ary theory $\mathbb E$ over an $\aleph_1$-language $\mathbb L$ by Proposition~\ref{monad->equations}. We do not know whether there is a nice choice of $\mathbb L$ and $\mathbb E$ that presents Banach spaces.
\end{exam}

To conclude this section we characterize $\cv$-categories of models of equational theories as certain enriched orthogonality classes; this will be useful for the Birkhoff variety theorems. 

An object $X$ of a $\cv$-category $\ck$ is said to be {\em orthogonal with respect to $h\colon A\to B$} if the map $$\ck(h,X)\colon \ck(B,X)\to\ck(A,X)$$ is an isomorphism in $\cv$. A full subcategory of $\ck$ spanned by objects orthogonal with respect to a collection of maps is called an {\em orthogonality class}. Then:

\begin{propo}\label{regular-epi-orth}
	Let $\mathbb L$ be a $\lambda$-ary language, and $\lambda\leq\kappa\leq\infty$. Then classes defined by extended $\kappa$-ary equational $\mathbb L$-theories in $\Str(\mathbb L)$ are precisely given by orthogonality classes defined with respect to maps of the form 
	$$ h\colon FX\twoheadrightarrow W $$
	in $\Str(\mathbb L)$, where $X\in\cv_\kappa$ and $h$ is a regular epimorphism.
\end{propo}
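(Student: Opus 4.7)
The plan is to match equations $(s=t)$ with regular epimorphisms $FX\twoheadrightarrow W$ via the coequalizer construction in $\Str(\mathbb L)$, and then, going the other way, to recover equations from a given regular epi using the monadic adjunction $F\dashv U_\mathbb L$ together with the $\kappa$-accessibility of $\cv$.

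For the first direction, recall from Definition~\ref{extended-term} that an extended $\kappa$-ary $(X,Y)$-term is precisely a morphism $FY\to FX$ in $\Str(\mathbb L)$, with $X,Y\in\cv_\kappa$. Given an equation $(s=t)$ in a $\kappa$-ary equational theory $\mathbb E$, I would let $h_{s,t}\colon FX\twoheadrightarrow W_{s,t}$ be the enriched coequalizer of $s,t\colon FY\to FX$ in the cocomplete $\cv$-category $\Str(\mathbb L)$ (Proposition~\ref{F-str}); this is a regular epimorphism with $X\in\cv_\kappa$. Since the enriched representable $\Str(\mathbb L)(-,A)$ sends coequalizers to equalizers in $\cv$, the morphism $\Str(\mathbb L)(h_{s,t},A)$ is an isomorphism if and only if $\Str(\mathbb L)(s,A)=\Str(\mathbb L)(t,A)$, which via the natural iso $\Str(\mathbb L)(FZ,A)\cong A^Z$ is exactly the enriched satisfaction $s_A=t_A$. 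Hence $\Mod(\mathbb E)$ equals the orthogonality class with respect to $\{h_{s,t}\}_{(s=t)\in\mathbb E}$.

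For the converse, let $h\colon FX\twoheadrightarrow W$ be a regular epi with $X\in\cv_\kappa$ and form its kernel pair $\pi_1,\pi_2\colon K\rightrightarrows FX$. The counit $\epsilon\colon FU_\mathbb L K\twoheadrightarrow K$ of the monadic adjunction $F\dashv U_\mathbb L$ is a regular epimorphism, so $h$ is also the coequalizer of the pair $s:=\pi_1\epsilon$ and $t:=\pi_2\epsilon$ from $FU_\mathbb L K$ to $FX$. Since $\cv$ is locally $\kappa$-presentable, I would write $U_\mathbb L K\cong\colim_i Y_i$ as an enriched $\kappa$-filtered colimit of $\kappa$-presentables $Y_i\in\cv_\kappa$; because $F$ is a left adjoint, $FU_\mathbb L K\cong\colim_i FY_i$ in $\Str(\mathbb L)$. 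Let $s_i,t_i\colon FY_i\to FX$ be the composites with the coprojections; these are bona fide extended $\kappa$-ary $(X,Y_i)$-terms.

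Finally, orthogonality of $A$ to $h$ means $\Str(\mathbb L)(s,A)=\Str(\mathbb L)(t,A)$, and applying the enriched representable $\Str(\mathbb L)(-,A)$ to $FU_\mathbb L K\cong\colim_i FY_i$ turns the colimit into a limit, so this equality is equivalent to $\Str(\mathbb L)(s_i,A)=\Str(\mathbb L)(t_i,A)$ for every $i$, i.e., to $A$ satisfying every equation $(s_i=t_i)$. Gathering these equations as $h$ ranges over a given family of such regular epis produces a $\kappa$-ary equational theory whose models form the prescribed orthogonality class. I expect the main obstacle to be the middle step: realizing an arbitrary regular epi $FX\twoheadrightarrow W$ as a coequalizer of pairs between free $\mathbb L$-structures on $\kappa$-presentable objects, which critically combines the monadic cover $FU_\mathbb L K\to K$ with the decomposition of $U_\mathbb L K$ into $\kappa$-presentables, and requires the enriched bookkeeping (colimits versus limits under representables, $F$ preserving enriched colimits) to be respected throughout.
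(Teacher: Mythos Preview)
Your proof is correct and follows essentially the same strategy as the paper: for one direction take the coequalizer of $(s,t)$, and for the converse take the kernel pair of $h$ and cover it by free structures on $\kappa$-presentable objects. The only minor difference is that the paper covers the kernel pair $K$ directly by an epimorphic family $\{FX_i\to K\}$ using that the $FX_i$ with $X_i\in\cv_\kappa$ form a strong generator of $\Str(\mathbb L)$, whereas you factor through the monadic counit $FU_\mathbb L K\twoheadrightarrow K$ and then decompose $U_\mathbb L K$ as a $\kappa$-filtered colimit in $\cv$; both devices serve the same purpose and the remaining argument is identical.
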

\begin{proof}
	On one hand, if we are given an equation $(s=t)$, with extended terms $s,t\colon FY\to FX$ in $\Str(\mathbb L)$, we can consider the coequalizer $h\colon FX\to W$ of $(s,t)$. It follows that an $A\in \Str(\mathbb L)$ satisfies $(s=t)$ if and only if $\Str(\mathbb L)(s,A)=\Str(\mathbb L)(t,A)$, if and only if the equalizer of the pair $(\Str(\mathbb L)(s,A),\Str(\mathbb L)(t,A))$ in $\cv$ is an isomorphism. But that equalizer is exactly $\Str(\mathbb L)(h,A)$. Thus $A$ satisfies $(s=t)$ if and only if it is orthogonal with respect to $h$.
	
	Conversely, given a regular epimorphism $ h\colon FX\twoheadrightarrow W $ with $X\in\cv_\kappa$, we can consider its kernel pair $(s',t')\colon K\to FX$, and find an epimorphic family of maps $\{m_i\colon FX_i\to K\}_{i\in J}$ with $X_i\in\cv_\kappa$ (since these form a strong generator). Let now $s_i:=s'm_i$ and $t_i:=t'm_i$. Arguing as above it follows that, given $A\in \Str(\mathbb L)$, the arrow $\Str(\mathbb L)(h,A)$ is an isomorphism if and only if $\Str(\mathbb L)(s',A)=\Str(\mathbb L)(t',A)$, if and only if $\Str(\mathbb L)(s_i,A)=\Str(\mathbb L)(t_i,A)$ for all $i\in J$, if and only if $A$ satisfies $(s_i=t_i)$ for any $i\in J$.
\end{proof}

\subsection{Elimination of arities}\label{elimination-arities}$ $

Now we turn to the elimination of arities and of extended terms. The next corollary shows that extended terms can always be replaced by standard ones, at the cost of changing language. This is slightly stronger than the implication $(1)\Rightarrow (2)$ of Theorem~\ref{char-single} since we talk about an isomorphism rather than an equivalence.

\begin{coro}\label{ext->stand}
	Every $\cv$-category of models $\Mod(\mathbb E)$ of an extended $\lambda$-ary equational theory is isomorphic, as a $\cv$-category over $\cv$, to $\Mod(\mathbb E')$ where $\mathbb E'$ is a $\lambda$-ary equational $\mathbb L'$-theory.
\end{coro}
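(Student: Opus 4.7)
The plan is to obtain the desired isomorphism by composing two strict isomorphisms already established in the section, using the monad associated to $\Mod(\mathbb E)$ as an intermediate step. The advantage of this route over invoking $(1)\Rightarrow(2)$ of Theorem~\ref{char-single} directly is that Theorem~\ref{char-single} only yields an equivalence, whereas both intermediate steps in fact yield isomorphisms of $\cv$-categories commuting with the forgetful functors.

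First, I would apply Proposition~\ref{eqations->monad} to the given $\lambda$-ary equational theory $\mathbb E$. This tells us that the forgetful $\cv$-functor $U\colon\Mod(\mathbb E)\to\cv$ is strictly $\lambda$-ary monadic. Let $T$ denote the induced $\lambda$-ary $\cv$-monad on $\cv$; by strict monadicity, the canonical comparison $\cv$-functor gives an isomorphism
\[
\Phi\colon\Mod(\mathbb E)\xrightarrow{\ \cong\ }\tx{Alg}(T)
\]
of $\cv$-categories over $\cv$ (i.e., the forgetful functors commute with $\Phi$).

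Next, I would apply Proposition~\ref{monad->equations} to the monad $T$. This produces a $\lambda$-ary language $\mathbb L'$ and a $\lambda$-ary equational $\mathbb L'$-theory $\mathbb E'$, whose equations are between terms in the sense of Definition~\ref{terms} (no extended terms required), together with an isomorphism
\[
E\colon\tx{Alg}(T)\xrightarrow{\ \cong\ }\Mod(\mathbb E')
\]
of $\cv$-categories over $\cv$. Composing, the $\cv$-functor $E\circ\Phi\colon\Mod(\mathbb E)\to\Mod(\mathbb E')$ is an isomorphism of $\cv$-categories commuting with the forgetful functors to $\cv$, which is precisely what the corollary claims.

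There is no genuine obstacle here; the work has been done in Propositions~\ref{eqations->monad} and~\ref{monad->equations}. The only point that must be verified carefully is that both isomorphisms are strict (not merely equivalences) and that both commute strictly with the forgetful functors. For the first, this is exactly the content of strict monadicity as formulated in Proposition~\ref{eqations->monad}; for the second, the commutativity of the forgetful triangle is part of the statement of Proposition~\ref{monad->equations}. Thus the composite inherits both properties automatically, yielding the desired isomorphism $\Mod(\mathbb E)\cong\Mod(\mathbb E')$ over $\cv$.
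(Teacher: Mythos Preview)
Your proposal is correct and follows exactly the approach of the paper, which simply says the corollary follows from putting together Propositions~\ref{eqations->monad} and~\ref{monad->equations}. Your explicit attention to strictness (rather than mere equivalence) and to commutativity with the forgetful functors is a welcome elaboration of what the paper leaves implicit.
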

\begin{proof}
	Follows from putting together Propositions~\ref{monad->equations} and~\ref{eqations->monad}.
\end{proof}

Recall that a set of objects $\cg$ of $\cv_0$ is called a {\em generator} if the functors $\cv_0(G,-)$, for $G\in\cg$, are jointly faithful. Then we prove:

\begin{propo}\label{G-powers}
	Let $\cg\subseteq\cv_\lambda$ be a generator of $\cv_0$. Every $\cv$-category of models $\Mod(\mathbb E)$ of a $\lambda$-ary equational $\mathbb L$-theory is isomorphic, as a $\cv$-category over $\cv$, to $\Mod(\mathbb E')$ where $\mathbb E'$ is a $\lambda$-ary equational $\mathbb L'$-theory, over some other language $\mathbb L'$, with terms obtained by restricting rule (3) of \ref{terms} only to $Z\in\cg$.
\end{propo}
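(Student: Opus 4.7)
The plan is to refine the construction in Proposition~\ref{monad->equations} by replacing its power-preservation axiom with one restricted to $\cg$, and then to show that nothing is lost. By Theorem~\ref{char-single}, $\Mod(\mathbb E) \cong \tx{Alg}(T)$ for some $\lambda$-ary monad $T$ on $\cv$; let $H\colon \cv_\lambda^{\op}\to\ct$ be the associated $\cv_\lambda$-theory. I would define $\mathbb L'$ exactly as in that proof, with a symbol $\overline f\colon (X,Y)$ for each morphism $f\colon X\to Y$ of $\ct$, and let $\mathbb E'$ consist of axioms (1), (2), (3) of that proof together with axiom (4') obtained from (4) by restricting $Z$ to lie in $\cg$. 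The only power terms appearing in $\mathbb E'$ are of the form $\overline f^G$ with $G\in\cg$, so $\mathbb E'$ uses only terms whose power rule is restricted to $\cg$.

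It suffices to show that $\Mod(\mathbb E')$ coincides with $\Mod(\mathbb E_0)$ as a $\cv$-category over $\cv$, where $\mathbb E_0$ is the theory of Proposition~\ref{monad->equations}. One direction is immediate. For the converse, let $A$ be a model of $\mathbb E'$ and fix $Z \in \cv_\lambda$, $f\colon X\to Y$ in $\ct$, $G\in\cg$ and $z\colon G\to Z$ in $\cv_\lambda$. Because $H$ preserves $\lambda$-small powers and power in $\ct$ is $\cv$-natural, we have in $\ct$ the commutativity
$$
(G\otimes f)\circ H(z\otimes X) \;=\; H(z\otimes Y)\circ (Z\otimes f).
$$
Applying $\overline{(-)}$ and invoking axioms (1) and (3) yields the term identity $\overline{G\otimes f}\circ (z\otimes X) = (z\otimes Y)\circ \overline{Z\otimes f}$; interpreting on $A$ and then using (4') to replace $(\overline{G\otimes f})_A$ by $(\overline f_A)^G$ gives
$$
(\overline f_A)^G\circ A^{z\otimes X} \;=\; A^{z\otimes Y}\circ (\overline{Z\otimes f})_A.
$$
The universal property of the power yields exactly the same identity with $(\overline{Z\otimes f})_A$ replaced by $(\overline f_A)^Z$.

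The main obstacle is to promote these post-composition equalities into the genuine equality $(\overline{Z\otimes f})_A = (\overline f_A)^Z$. For this I would verify that the family $\{A^{z\otimes Y}\colon A^{Z\otimes Y}\to A^{G\otimes Y}\}_{G\in\cg,\,z\colon G\to Z}$ is jointly monic in $\cv_0$. Since $\cg$ is a generator, the canonical arrow $\coprod_{(G,z)}G\to Z$ is an epimorphism in $\cv_0$; the functor $-\otimes Y$, being a left adjoint, preserves epimorphisms, so the family $\{z\otimes Y\}_{(G,z)}$ is jointly epic; the contravariant internal-hom $[-,A]$ then converts it into a jointly monic family, giving the required property. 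We conclude $(\overline{Z\otimes f})_A = (\overline f_A)^Z$, so $A$ is a model of $\mathbb E_0$; since $\mathbb E'$ and $\mathbb E_0$ share the same underlying $\mathbb L'$-structures on each object of $\cv$, the resulting isomorphism $\Mod(\mathbb E')\cong\Mod(\mathbb E_0)$ is strictly over $\cv$.
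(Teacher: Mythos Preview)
Your proposal is correct and follows essentially the same route as the paper: reduce via Proposition~\ref{monad->equations} to the concrete theory $\mathbb E_0$, keep axioms (1)--(3), restrict axiom (4) to $Z\in\cg$, and then show that any model of the restricted theory already satisfies the full (4). The only difference is in the bookkeeping of the last step. The paper transposes the equation $t_A^Z=s_A$ to a square with domain $Z$ and uses directly that the family $\{z\colon G\to Z\}_{G\in\cg}$ is jointly epic; you instead push $z$ through $-\otimes Y$ and $[-,A]$ to argue that $\{A^{z\otimes Y}\}$ is jointly monic and cancel on the other side. Since $-\otimes Y$ preserves epimorphisms and $[-,A]$ sends epimorphisms to monomorphisms in $\cv_0$, both arguments are valid expressions of the same generator hypothesis, and neither buys anything the other does not.
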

\begin{proof}
	Thanks to the proof of Proposition~\ref{monad->equations} we can assume that the formulas using power terms are all of the form $(t^Z=s)$ where $s$ has arity $(Z\otimes X,Z\otimes Y)$ and $t$ has arity $(X,Y)$.
	To conclude, it is enough to prove that the equality $t^Z=s$ holds in an $\mathbb L$-structure $A$ if and only if $$(t^G(z\otimes X)=(z\otimes Y)(s))$$ holds in $A$ for any $z\colon G\to Z$ with $G\in\cg$, where $z\otimes X$ is the term corresponding to the map
	$$z\otimes X\colon G\otimes X\to Z\otimes X$$ 
	in $\cv_\lambda$. Now note that the equality $t^Z_A=s_A$ holds if and only if the solid square below
	\begin{center}
		\begin{tikzpicture}[baseline=(current  bounding  box.south), scale=2]
			
			\node (0) at (-0.9,0.8) {$G$};
			\node (a0) at (0,0.8) {$Z$};
			\node (b0) at (2,0.8) {$[A^{Z\otimes Y},A^Y]$};
			\node (c0) at (0,0) {$[A^{Z\otimes X},A^X]$};
			\node (d0) at (2,0) {$[A^{Z\otimes X},A^Y]$};
			
			\path[font=\scriptsize]
			
			(0) edge [dashed, ->] node [above] {$z$} (a0)
			(a0) edge [->] node [above] {$1_{Z\otimes Y}'$} (b0)
			(a0) edge [->] node [left] {$1_{Z\otimes X}'$} (c0)
			(b0) edge [->] node [right] {$[s,A^Y]$} (d0)
			(c0) edge [->] node [below] {$[A^{Z\otimes X},t]$} (d0);
		\end{tikzpicture}	
	\end{center}
	commutes in $\cv$, where $1_{Z\otimes X}'$ and $1_{Z\otimes Y}'$ are the transposes of the identities. By the hypothesis on $\cg$, this square commutes if and only if the squares obtained after pre-composing with all maps $z\colon G\to Z$, for $G\in\cg$, commute. It is now easy to see that, for each such $z$, the lower composite of the new square transposes to $(t^G(z\otimes X))_A$ and the upper one to $((z\otimes Y)(s))_A$. Thus $t^Z_A=s_A$ holds if and only if $(t^G(z\otimes X))_A=((z\otimes Y)(s))_A$ holds for any $z$ as above.
\end{proof}

As a direct consequence:

\begin{coro}\label{elimin-powers}
	Let the unit $I$ be a generator in $\cv_0$. Every $\cv$-category of models $\Mod(\mathbb E)$ of a $\lambda$-ary equational theory on $\mathbb L$ is isomorphic, as a $\cv$-category over $\cv$, to $\Mod(\mathbb E')$ where $\mathbb E'$ is a $\lambda$-ary equational $\mathbb L'$-theory involving terms obtained by applying the rules (1), (2), and (4) of \ref{terms}. 
\end{coro}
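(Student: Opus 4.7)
The plan is to derive this as a direct consequence of Proposition~\ref{G-powers}, applied to the singleton generator $\cg = \{I\}$. Since the unit $I$ lies in $\cv_\lambda$ (being the tensor unit of a locally $\lambda$-presentable closed category) and is a generator of $\cv_0$ by hypothesis, the proposition yields an isomorphism $\Mod(\mathbb E) \cong \Mod(\mathbb E')$ of $\cv$-categories over $\cv$, where $\mathbb E'$ is a $\lambda$-ary equational $\mathbb L'$-theory whose terms use rules (1), (2), and (4) of Definition~\ref{terms} together with rule (3) only in the degenerate case $Z = I$.

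The remaining task is to eliminate each residual power term $t^I$ of arity $(I \otimes X, I \otimes Y)$, where $t$ is an $(X,Y)$-ary term. Let $u_X\colon I \otimes X \to X$ denote the left unit isomorphism in $\cv_\lambda$; via rule (1), this map and its inverse give rise to a $(I \otimes X, X)$-ary term $u_X^{-1}$ and, analogously, a $(Y, I \otimes Y)$-ary term $u_Y$, interpreted on any $\mathbb L$-structure $A$ as $A^{u_X^{-1}}$ and $A^{u_Y}$ respectively. Unwinding Definition~\ref{terms}(3), one checks that
$$ t^I_A \;=\; A^{u_Y} \circ t_A \circ A^{u_X^{-1}}, $$
since the canonical isomorphism $(A^X)^I \cong A^{I \otimes X}$ appearing in the definition of $t^I_A$ factors as the trivial iso $(A^X)^I \cong A^X$ followed by $A^{u_X}$, and $(t_A)^I$ is identified with $t_A$ under the trivial isomorphism $(-)^I \cong (-)$. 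Consequently every subterm $t^I$ occurring in an equation of $\mathbb E'$ may be replaced by the composite term $u_Y \circ t \circ u_X^{-1}$, built using only rules (1), (2), and (4). This produces a theory $\mathbb E''$ over $\mathbb L'$ with $\Mod(\mathbb E'') = \Mod(\mathbb E')$, yielding $\Mod(\mathbb E) \cong \Mod(\mathbb E'')$ as required.

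The argument is essentially formal. The only point requiring a little care is the coherence identification of the canonical isomorphism $(A^X)^I \cong A^{I \otimes X}$ with the action $A^{u_X}$ of the left unitor, which follows routinely from the definitions of the internal hom and the tensor--hom adjunction; no genuine obstacle is expected.
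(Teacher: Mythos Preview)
Your proposal is correct and follows essentially the same approach as the paper: apply Proposition~\ref{G-powers} with $\cg=\{I\}$, and then observe that the residual rule~(3) with $Z=I$ is redundant. The paper's proof simply states that this case is ``trivial'' without elaboration, whereas you have spelled out explicitly that $t^I$ may be replaced by the composite $u_Y\circ t\circ u_X^{-1}$ built from rules~(1) and~(4); this is exactly the content hidden behind the word ``trivial''.
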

\begin{proof}
	By Proposition~\ref{G-powers} above we need to apply rule (3) only for $Z=I$, making it trivial. 
\end{proof}

Next we show that equations can be modified so that the output arities are restricted to a generator of $\cv_0$. Note that the language $\mathbb L$ itself can have output arities not in $\cg$. 

\begin{propo}\label{G-output}
	Let $\cg\subseteq\cv_\lambda$ be a generator of $\cv_0$. For every $\lambda$-ary equational $\mathbb L$-theory $\mathbb E$ there exists a $\lambda$-ary equational $\mathbb L$-theory $\mathbb E'$ such that  \begin{itemize}
		\item $\Mod(\mathbb E)$ is isomorphic, as a $\cv$-category over $\cv$, to $\Mod(\mathbb E')$;
		\item the equations of $\mathbb E'$ are of the form $(s=t)$ where the output arity, common to $s$ and $t$, lies in $\cg$.
	\end{itemize}
\end{propo}
\begin{proof}
	Consider an equation $(s=t)$ in $\mathbb E$ with arities $(X,Y)$, and let $h_i\colon G_i\to Y$ be an epimorphic family over $Y$ with $G_i\in\cg$ for any $i$. Then for any $\mathbb L$-structure $A$ the family $\{A^{h_i}\colon A^Y\to A^{G_i}\}_i$ is jointly monomorphic; it follows that $A$ satisfies $(s=t)$ if and only if it satisfies $(h_i(s)=h_i(t))$ for any $i$. 
\end{proof}

We conclude this section with the following theorem which will be useful when trying to express in simple terms what our enriched universal algebra looks like for specific instances of enrichment; see Remark~\ref{applications}. This is a specialization of \cite[3.17]{R3} to our setting.

\begin{theo}\label{elimination}
	Let $\cg\subseteq\cv_\lambda$ be a generator of $\cv_0$. Every $\cv$-category of models $\Mod(\mathbb E)$ of an extended $\lambda$-ary equational $\mathbb L$-theory is isomorphic, as a $\cv$-category over $\cv$, to $\Mod(\mathbb E')$ where $\mathbb E'$ is a $\lambda$-ary equational $\mathbb L'$-theory whose terms:
		\begin{enumerate}\setlength\itemsep{0.05em}
			\item have output arity in $\cg$;
			\item are obtained by restricting rule (3) of \ref{terms} only to $Z\in\cg$.
		\end{enumerate}
\end{theo}
\begin{proof}
	Follows from Corollary~\ref{ext->stand} and Propositions~\ref{G-powers} and~\ref{G-output}.
\end{proof}

\begin{rem}\label{applications}
	When $\cv=\Set,\Pos,\Met,$ and $\omega$-$\CPO$ we can choose $\cg=\{1\}$. Thus, with Theorem~\ref{elimination} we recover what we had calculated explicitly in Examples~\ref{ex} using results from the literature: for the bases of enrichment mentioned above it is enough to consider $(X,1)$-ary terms. When $\cv=\Cat$ we can consider $\cg=\{\bf{2}\}$, while for $\cv=\SSet$ we can choose $\cg=\bf{\Delta}$. This is potentially useful to develop explicitly 2-dimensional and simplicial universal algebra.
\end{rem}

\section{Enriched Birkhoff subcategories}\label{birkhoff-section}

We introduce the notion of enriched Birkhoff subcategory, which is supposed to characterize those full subcategories of $\Str(\mathbb L)$ that are of the form $\Mod(\mathbb E)$, for some $\mathbb L$-theory $\mathbb E$. An unenriched Birkhoff-type theorem for a general ambient category $\ck$ (in place of $\Str(\mathbb L)$) was proved by Manes \cite{M}; providing a starting point for the proof of our result. However, his ``equations'' were just quotients of free algebras $FX$; this approach was made enriched in \cite{MU}. To transform these ``equations'' to real equations of extended terms we shall need more hypotheses on $\cv$. In a special case, this was done in \cite{R1}.

First of all, we need to show what closure properties $\Mod(\mathbb E)$ does satisfy. Throughout this section $U\colon \Str(\mathbb L)\to\cv$ is the forgetful $\cv$-functor. Below we say that a map $f\colon A\to B$  of $\mathbb L$-structures is $\cv$-\textit{split} if it is a split epimorphism in $\cv$; equivalently, if it is $U$-split. Note that then $f$ is necessarily an epimorphism in $\Str(\mathbb L)$. By a {\em substructure} of $B$ in $\Str(\mathbb L)$ we mean an object $A$ together with a monomorphism $A\rightarrowtail B$ in $\Str(\mathbb L)$.

\begin{propo}\label{Mod->Birk}
	Let $\mathbb L$ be a $\lambda$-ary language and $\mathbb E$ an extended equational theory on $\mathbb L$. Then $\Mod(\mathbb E)$ is closed in $\Str(\mathbb L)$ under products, powers, substructures, and $\cv$-split quotients. If $\mathbb E$ is $\lambda$-ary, then $\Mod(\mathbb E)$ is also closed under $\lambda$-filtered colimits.
\end{propo}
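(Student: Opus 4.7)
The plan is to reduce everything to a single equation $(s=t)$ with extended terms $s,t\colon FY\to FX$ in $\Str(\mathbb L)$, since $\Mod(\mathbb E)$ is the intersection over $\mathbb E$ of the subcategories $\Mod(s=t)$, and the listed closure properties are stable under intersections. The central observation is that, as noted just after Definition~\ref{satisf}, an $\mathbb L$-structure $A$ satisfies $(s=t)$ if and only if the two induced morphisms $\Str(\mathbb L)(s,A),\Str(\mathbb L)(t,A)\colon A^X \to A^Y$ (equivalently $s_A$ and $t_A$) agree in $\cv$.

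For \emph{products and powers}, I would use that $U\colon\Str(\mathbb L)\to\cv$ preserves limits (by Proposition~\ref{F-str}), hence so do $A\mapsto A^X=[X,UA]$ and $A\mapsto A^Y$; moreover, $A\mapsto s_A$ and $A\mapsto t_A$ are $\cv$-natural in $A$. Thus for any product (or power) of $\mathbb L$-structures satisfying $(s=t)$, the corresponding interpretations of $s$ and $t$ are the limit of equal maps and so coincide. For \emph{substructures}, given a mono $m\colon A\rightarrowtail B$ in $\Str(\mathbb L)$ with $B\in\Mod(s=t)$, the map $m^X\colon A^X\to B^X$ is mono in $\cv$ (as $[X,-]$ preserves monos), and naturality yields $m^Y\circ s_A=s_B\circ m^X=t_B\circ m^X=m^Y\circ t_A$; since $m^Y$ is mono, $s_A=t_A$.

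The case of \emph{$\cv$-split quotients} is dual: if $e\colon B\twoheadrightarrow A$ is split by some $i\colon A\to B$ in $\cv$ (not necessarily in $\Str(\mathbb L)$), then $e^X\colon B^X\to A^X$ is split by $i^X$ in $\cv$ and in particular is an epimorphism. $\cv$-naturality of interpretations gives $s_A\circ e^X=e^Y\circ s_B=e^Y\circ t_B=t_A\circ e^X$, and cancelling $e^X$ yields $s_A=t_A$. The subtle point here is that although $i$ need not be a morphism of $\mathbb L$-structures, what we actually need is only that $e^X$ be epi in $\cv$, which follows from $e$ being split in $\cv$. Finally, for \emph{$\lambda$-filtered colimits} under the assumption that $\mathbb E$ is $\lambda$-ary (so $X,Y\in\cv_\lambda$), I would invoke that $F\colon\cv\to\Str(\mathbb L)$ is a left adjoint to the $\lambda$-ary functor $U$, hence $FX$ and $FY$ are $\lambda$-presentable in $\Str(\mathbb L)$. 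Then for a $\lambda$-filtered colimit $A=\colim_i A_i$ with $A_i\in\Mod(s=t)$, both $\Str(\mathbb L)(FX,-)$ and $\Str(\mathbb L)(FY,-)$ preserve the colimit, so $\Str(\mathbb L)(s,A)$ and $\Str(\mathbb L)(t,A)$ are the colimit of the equal maps $\Str(\mathbb L)(s,A_i)=\Str(\mathbb L)(t,A_i)$, and hence coincide.

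None of the individual steps is hard; the only thing to be careful about is the $\cv$-split case, where the splitting lives in $\cv$ and not in $\Str(\mathbb L)$, so the argument must go through the fact that $(-)^X=[X,U(-)]$ and use epicness of $e^X$ in $\cv$ rather than any structural splitting. The $\lambda$-filtered colimit step also hinges on the correct identification of $\lambda$-presentable objects of $\Str(\mathbb L)$ as being generated by $FX$ with $X\in\cv_\lambda$, which is standard since $U$ is $\lambda$-ary monadic.
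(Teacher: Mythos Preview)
Your proposal is correct and follows essentially the same approach as the paper. The paper handles products, substructures, and $\cv$-split quotients by exactly the naturality-plus-mono/epi arguments you give, and its treatment of $\lambda$-filtered colimits is a one-line ``argue in the same manner'' which your use of $\lambda$-presentability of $FX,FY$ makes explicit; for powers the paper writes out that $t_{A^Z}\cong (t_A)^Z$, which is the concrete form of your limit-preservation remark.
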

\begin{proof}
	The closure under products is evident since $$s_{\prod A_i}\cong \textstyle\prod s_{A_i}$$ for any extended term $s$ and $\mathbb L$-structures $A_i$. Argue in the same manner for closure under $\lambda$-filtered colimits when $\mathbb E$ is $\lambda$-ary.
	
	Assume now that $A$ satisfies the equation $(s=t)$ where $s$ and $t$ are $(X,Y)$-ary. Then $A$ satisfies the equation $s^Z=t^Z$ for every $Z$ in $\cv$. Thus, since the interpretation of $t$ at $A^Z$ below
	$$ t_{A^Z}\colon (A^Z)^X\to (A^Z)^Y $$
	is naturally isomorphic to
	$$ t_A^Z\colon A^{Z\otimes X}\to A^{Z\otimes Y} $$
	(and the same for $s$), it follows that $A^Z$ also satisfies $(s=t)$.
	
	Let now $f\colon A\to B$ be a monomorphism of $\mathbb L$-structures such that $B$ is an $\mathbb E$-model. For any equation $(s=t)$ in $\mathbb E$ we can consider the diagram below
	\begin{center}
		\begin{tikzpicture}[baseline=(current  bounding  box.south), scale=2]
			
			\node (z) at (-1,0) {$A^X$};
			\node (a) at (0,0) {$A^Y$};
			\node (b) at (-1,-0.8) {$B^X$};
			\node (b1) at (0,-0.8) {$B^Y$};

			\path[font=\scriptsize]
			
			(z) edge [->] node [left] {$f^X$} (b)
			(a) edge [->] node [right] {$f^Y$} (b1)
			
			([yshift=-1.5pt]z.east) edge [->] node [below] {$s_A$} ([yshift=-1.5pt]a.west)
			([yshift=1.5pt]z.east) edge [->] node [above] {$t_A$} ([yshift=1.5pt]a.west)
			([yshift=-1.5pt]b.east) edge [->] node [below] {$s_B$} ([yshift=-1.5pt]b1.west)
			([yshift=1.5pt]b.east) edge [->] node [above] {$t_B$} ([yshift=1.5pt]b1.west);
		\end{tikzpicture}
	\end{center}
	where $f^Y$ is a monomorphism in $\cv$. Thus $s_B=t_B$ implies that $s_A=t_A$, and hence $A$ is a model of $\mathbb E$. 
	Finally, if $f\colon A\to B$ is a morphism of $\mathbb L$-structures which splits in $\cv$ and such that $A$ is an $\mathbb E$-model, then, in the diagram above, $f^X$ is a split epimorphism in $\cv$ (since $f$ was). Thus $s_A=t_A$ implies that $s_B=t_B$, and hence $B$ is a model of $\mathbb E$.
\end{proof}

Thus we define:

\begin{defi}
	We say that $\cl$ is an \textit{enriched Birkhoff subcategory} of $\Str(\mathbb L)$ if it is a full replete subcategory of $\Str(\mathbb L)$ closed under:\begin{itemize}
		\item products and powers;
		\item substructures: if $A\rightarrowtail B$ is a monomorphism in $\Str(\mathbb L)$ and $B\in\cl$, then $A\in\cl$;
		\item $\cv$-split quotients: if $A\twoheadrightarrow B$ in $\Str(\mathbb L)$ is an epimorphism that splits in $\cv$ and $A\in\cl$, then $B\in\cl$.
	\end{itemize}
\end{defi}
 
This is an enriched version of the notion considered in \cite{M}, where Birkhoff subcategories are defined as being closed under products, substructures, and $U$-split quotients. 

\begin{rem}
	If $I$ is a generator in $\cv_0$ then every Birkhoff subcategory of $\Str(\mathbb L)_0$ is enriched (see \cite[5.2]{R1}).
\end{rem}

\begin{exam}
	Consider the full subcategory $\cl$ of $\Gra$ consisting of the graphs with no edges and the unit $I$ (given by the graph with one vertex and one edge). Then $\cl$ is an unenriched Birkhoff subcategory of $\Gra$ but not an enriched one because it is not closed under powers by $1$. Here, we consider $\Gra$ as $\Str(\emptyset)$. Clearly, $\cl$ is given by the unenriched satisfaction of equations $p_1=p_2$ where
	$p_1,p_2\colon G\times G\to G$ with $G$ non-trivial. But $\cl$ does not satisfy
	$p_1^1=p_2^1$ because $p_1^1,p_2^1\colon G_t\times G_t\to G_t$ where $G_t$ is the trivial graph having the same vertices as $G$.
\end{exam}

We shall now characterize enriched Birkhoff subcategories via $\infty$-ary equational theories. Note that the $\cv$-category $\ct_\mathbb L^\infty$ of $\infty$-ary extended terms is not small in general, since it is partly generated by all morphisms in $\cv$. Thus, $\infty$-ary equational theories will often involve a large class of equations.

\begin{theo}\label{birkhoff}
	Let $\mathbb L$ be a $\lambda$-ary language for which in $\Str(\mathbb L)$ every strong epimorphism is regular. Then enriched Birkhoff subcategories of $\Str(\mathbb L)$ are precisely classes defined by extended $\infty$-ary equational $\mathbb L$-theories.
\end{theo}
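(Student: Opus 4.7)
The \emph{if} direction is Proposition~\ref{Mod->Birk}. For the converse, I will exhibit any enriched Birkhoff subcategory $\cl$ of $\Str(\mathbb L)$ as an enriched orthogonality class with respect to a family of regular epimorphisms from free structures, and then invoke Proposition~\ref{regular-epi-orth} to translate this into an $\infty$-ary equational theory.

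The first step is to show that $\cl$ is $\cv$-reflective in $\Str(\mathbb L)$. The hypothesis that strong epimorphisms coincide with regular epimorphisms in $\Str(\mathbb L)$, combined with local $\lambda$-presentability, guarantees a (regular epi, mono) factorization system. Given $A \in \Str(\mathbb L)$, the regular-epi quotients $A \twoheadrightarrow W$ with $W \in \cl$ form an essentially small family $\{A \twoheadrightarrow W_i\}_{i \in I}$ (by well-poweredness and closure of $\cl$ under subobjects). The image factorization of the induced map $A \to \prod_i W_i$ produces a regular epi $\rho_A\colon A \twoheadrightarrow rA$ with $rA \in \cl$ (by closure under products and subobjects), and a routine verification shows $\rho_A$ is the reflection of $A$ into $\cl$. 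Closure under powers then upgrades this to a $\cv$-reflection, so that $\Str(\mathbb L)(\rho_A, B)$ is an isomorphism in $\cv$ for every $B \in \cl$.

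Now set $\mathcal H := \{\rho_{FX}\colon FX \twoheadrightarrow r(FX)\}_{X \in \cv}$: these are regular epimorphisms from free structures into $\cl$. By Proposition~\ref{regular-epi-orth} (applied with $\kappa = \infty$), the enriched orthogonality class $\mathcal H^\perp$ is of the form $\Mod(\mathbb E)$ for some $\infty$-ary equational theory $\mathbb E$. The inclusion $\cl \subseteq \mathcal H^\perp$ is immediate from $\cv$-reflectivity applied to the free objects $FX$.

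For the reverse inclusion, take $A \in \mathcal H^\perp$ and apply orthogonality with respect to $\rho_{FUA}$: the counit $\epsilon_A\colon FUA \to A$ factors uniquely as $\tilde\epsilon_A \circ \rho_{FUA}$ for some $\tilde\epsilon_A\colon r(FUA) \to A$. Applying $U$ and precomposing with the unit $\eta_{UA}\colon UA \to UFUA$ of $F \dashv U$, the triangle identity yields $U\tilde\epsilon_A \circ (U\rho_{FUA} \circ \eta_{UA}) = \id_{UA}$, so $U\tilde\epsilon_A$ is split in $\cv$. Since $U$ is faithful (being monadic), $\tilde\epsilon_A$ is also an epimorphism in $\Str(\mathbb L)$; hence $A$ is a $\cv$-split quotient of $r(FUA) \in \cl$, and closure of $\cl$ under $\cv$-split quotients forces $A \in \cl$. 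The main obstacle is really the first step: the hypothesis on $\mathbb L$ is exactly what secures the (regular epi, mono) factorization system, without which the reflection unit might fail to be a regular epi and the passage to Proposition~\ref{regular-epi-orth} would break down; the rest of the argument is then a short adjunction chase exploiting that the counit is always $\cv$-split.
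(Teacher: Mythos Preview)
Your proof is correct and follows the same overall strategy as the paper's: exhibit $\cl$ as the enriched orthogonality class with respect to the regular-epi reflections $\rho_{FX}\colon FX\twoheadrightarrow r(FX)$ for $X\in\cv$, and then invoke Proposition~\ref{regular-epi-orth}. The execution differs slightly. The paper outsources both the reflectivity of $\cl_0$ in $\Str(\mathbb L)_0$ and its description as an ordinary orthogonality class to Manes~\cite{M}, and then argues separately (via the (strong epi, mono) factorization and closure under subobjects) that the reflections $\rho_Z$ are strong, hence regular, epimorphisms. You instead build the reflection directly from the (regular epi, mono) factorization of $A\to\prod_i W_i$, and verify $\cl=\mathcal H^\perp$ by the short counit/triangle-identity argument showing every $A\in\mathcal H^\perp$ is a $\cv$-split quotient of $r(FUA)\in\cl$. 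Your version is more self-contained and makes the role of the hypothesis (strong epi $=$ regular epi) visible from the outset, while the paper's version localizes the nontrivial reflectivity step in an external reference; both reach Proposition~\ref{regular-epi-orth} along the same path.
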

\begin{proof}
	One direction is given by Proposition~\ref{Mod->Birk}. For the other, let $R\colon \cl\hookrightarrow\Str(\mathbb L)$ be an enriched Birkhoff subcategory. By \cite[Chapter 3, 3.4]{M} applied to $T=UF$, we know that $\cl_0$ is reflective in $\Str(\mathbb L)_0$ (as ordinary categories), with left adjoint $L$, and that the units $\rho_Z\colon FZ\twoheadrightarrow F'Z=RLFZ$ become epimorphism in $\cv$ once we apply $U$ (in the notation of \cite{M} we have $T=UF$ and $T'=URLF$). Now, since $U$ is faithful, it reflects epimorphisms, so that each $\rho_Z$ is an epimorphism in $\Str(\mathbb L)$. But $\Str(\mathbb L)$ has the (strong epi, mono) factorization (by \cite[1.61]{AR}); therefore, using that $\cl$ is closed under substructures, it is easy to see that $\rho_Z$ is necessarily a strong epimorphisms in $\Str(\mathbb L)$.
	 
	Now note that $\cl_0$ is defined by the orthogonality class with respect to the maps $\rho_Z$ for any $Z\in\cv$; that is, $L\in\cl$ if and only if $\Str(\mathbb L)_0(\rho_Z,L)$ is a bijection for any $Z\in\cv$. Indeed, this is essentially a rephrasing of \cite[Chapter 3, 3.3]{M}.
	
	Then, since $\cl$ is closed under powers, $\cl$ is actually the enriched orthogonality class induced by the maps $\rho_Z$, for $Z\in\cv$. By hypothesis on $\Str(\mathbb L)$ each of these maps is a regular epimorphism; thus $\cl$ is defined by an $\infty$-ary equational $\mathbb L$-theory by Proposition~\ref{regular-epi-orth}.
\end{proof}

Next we focus on $\kappa$-ary equational theories for $\lambda\leq\kappa\leq\infty$.

\begin{defi}\label{l-birkhoff}
	Let $\mathbb L$ be a $\lambda$-ary language and $\lambda\leq\kappa\leq\infty$. We say that $\cl$ is an \textit{enriched $\kappa$-Birkhoff subcategory} of $\Str(\mathbb L)$ if it is closed under products, powers, substructures, $\cv$-split quotients, and $\kappa$-directed colimits.
\end{defi}

For $\kappa=\infty$ we recover the notion of enriched Birkhoff subcategory since $\infty$-directed colimits reduce to coequalizers of split pairs, and these are already captured by closure under $\cv$-split quotients.

\begin{theo}\label{l-birkhoff1}
	Let $\mathbb L$ be a $\lambda$-ary language for which in $\Str(\mathbb L)$ every strong epimorphism is regular, and let $\lambda\leq\kappa\leq\infty$. Then enriched $\kappa$-Birkhoff subcategories of $\Str(\mathbb L)$ are precisely classes given by extended $\kappa$-ary equational $\mathbb L$-theories.
\end{theo}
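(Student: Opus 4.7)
The plan is to refine the proof of Theorem~\ref{birkhoff}, exploiting the extra closure under $\kappa$-filtered colimits that distinguishes $\kappa$-Birkhoff subcategories. The direction $\Mod(\mathbb E) \Rightarrow \kappa$-Birkhoff is handled by Proposition~\ref{Mod->Birk} together with the observation that the interpretation of a $\kappa$-ary extended term $s\colon FY \to FX$ (with $X,Y \in \cv_\kappa$) commutes with $\kappa$-filtered colimits in $\cv$: powers by $\kappa$-presentable objects commute with them, and so do equalizers. Hence $\Mod(\mathbb E) \hookrightarrow \Str(\mathbb L)$ is closed under $\kappa$-filtered colimits whenever $\mathbb E$ is $\kappa$-ary, matching the pattern of the $\lambda$-ary statement in~\ref{Mod->Birk}.

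For the converse, given an enriched $\kappa$-Birkhoff subcategory $\cl \hookrightarrow \Str(\mathbb L)$, I would run the argument of Theorem~\ref{birkhoff} verbatim to produce a reflection $L \dashv R \colon \cl_0 \hookrightarrow \Str(\mathbb L)_0$ whose units $\rho_Z \colon FZ \twoheadrightarrow F'Z := RLFZ$ become strong epimorphisms in $\Str(\mathbb L)$ using closure under substructures and the (strong epi, mono)-factorization, hence regular by hypothesis. Combined with closure under powers, this realizes $\cl$ as the enriched orthogonality class of $\{\rho_Z\}_{Z \in \cv}$.

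The crucial new step is to reduce this to $\{\rho_Z \mid Z \in \cv_\kappa\}$. For such $Z$ the free object $FZ$ is $\kappa$-presentable in $\Str(\mathbb L)$, since $U$ preserves $\lambda$-filtered (hence $\kappa$-filtered) colimits by Proposition~\ref{F-str}. Closure of $\cl$ under $\kappa$-filtered colimits makes $R$ preserve them, and since $L$ and $F$ are left adjoints, the composite $F' = RLF \colon \cv \to \Str(\mathbb L)$ preserves $\kappa$-filtered colimits. Writing any $W \in \cv$ as a $\kappa$-filtered colimit $W = \colim_i Z_i$ with $Z_i \in \cv_\kappa$, naturality of $\rho$ gives $\rho_W \cong \colim_i \rho_{Z_i}$; applying $\Str(\mathbb L)(-,A)$ converts this to a limit of isomorphisms, so enriched orthogonality to every $\rho_{Z_i}$ propagates to $\rho_W$. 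Thus $\cl$ is the enriched orthogonality class of the regular epimorphisms $\rho_Z \colon FZ \twoheadrightarrow F'Z$ with $Z \in \cv_\kappa$, and Proposition~\ref{regular-epi-orth} identifies this with $\Mod(\mathbb E)$ for some $\kappa$-ary equational theory $\mathbb E$. The main subtlety is precisely this last reduction, which hinges on $R$ preserving $\kappa$-filtered colimits (being merely a right adjoint would not suffice), so that the defining closure of $\kappa$-Birkhoff subcategories is exactly what allows the passage from the $\infty$-ary orthogonality of Theorem~\ref{birkhoff} to the $\kappa$-ary version.
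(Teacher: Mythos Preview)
Your proposal is correct and follows essentially the same route as the paper: invoke Proposition~\ref{Mod->Birk} for one direction, rerun the argument of Theorem~\ref{birkhoff} to exhibit $\cl$ as the enriched orthogonality class of the regular epimorphisms $\rho_Z$, then use that the inclusion $R$ preserves $\kappa$-filtered colimits so that $F'=RLF$ does too, reducing orthogonality to $\{\rho_Z : Z\in\cv_\kappa\}$ and concluding via Proposition~\ref{regular-epi-orth}. Your write-up adds some helpful details (e.g.\ writing $\rho_W$ explicitly as the $\kappa$-filtered colimit of the $\rho_{Z_i}$ in the arrow category), but the argument and its key step are the same as the paper's.
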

\begin{proof}
	One direction is again given by Proposition~\ref{Mod->Birk}. For the other, consider an enriched $\kappa$-Birkhoff subcategory $R\colon \cl\hookrightarrow\Str(\mathbb L)$. Arguing as in the proof of Theorem~\ref{birkhoff} above, we know that $\cl$ is given by the full subcategory of $\Str(\mathbb L)$ spanned by the objects orthogonal with respect to the reflections $\rho_Z\colon FZ\to F'Z$. Now, since $R$ preserves $\kappa$-filtered colimits then also $F'$ does; therefore each $\rho_Z$ is the $\kappa$-filtered colimit in $\Str(\mathbb L)^\to$ of the $\rho_Y$ with $Y\in \cv_\kappa\downarrow Z$. It follows that orthogonality with respect to $\rho_Y$, for $Y\in\cv_\kappa$, implies orthogonality with respect to each $\rho_Z$, for $Z\in\cv$. 
	
	Therefore $\cl$ is defined by the orthogonality class with respect to the maps $\rho_Z$ for any $Z\in\cv_\kappa$. Since these are regular epimorphisms we can conclude again thanks to Proposition~\ref{regular-epi-orth}.
\end{proof}

In Appendix~\ref{more-birk} we shall give hypotheses on $\cv$ and $\mathbb L$ so that the hypotheses of the theorem above are satisfied.

In certain situations, closure under $\kappa$-directed colimits can be replaced by closure under specific quotient maps:

\begin{defi}[\cite{AR2}]
	A morphism $f\colon A\to B$ in $\cv$ is called a $\kappa$-\textit{pure epimorphism} if it is projective with respect to the $\kappa$-presentable objects. Explicitly, if for every $\kappa$-presentable object $X$, all morphisms $X\to B$ factor through $f$.
\end{defi}

Every split epimorphism is $\lambda$-pure for every $\lambda$. In a locally $\lambda$-presentable category $\lambda$-pure morphisms are precisely the $\lambda$-filtered colimits in $\ck^\to$ of split epimorphisms in $\ck$ (see \cite[Proposition 3]{AR2}). In particular, they are epimorphisms (as the name suggests). 

\begin{lemma}
	Let $\mathbb L$ be a $\lambda$-ary language and $\lambda\leq\kappa\leq\infty$. Then every enriched $\kappa$-Birkhoff subcategory of $\Str(\mathbb L)$ is closed under quotients $f\colon A\to B$ for which $Uf$ is $\kappa$-pure.
\end{lemma}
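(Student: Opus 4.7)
The plan is to use the characterization of $\kappa$-pure morphisms from \cite[Proposition~3]{AR2}: in the locally $\kappa$-presentable category $\cv$, $Uf$ is a $\kappa$-filtered colimit in $\cv^\to$ of split epimorphisms $s_i\colon X_i\to Y_i$, which we may assume to satisfy $X_i,Y_i\in\cv_\kappa$. The colimit cocone provides compatible maps $\alpha_i\colon X_i\to UA$ and $\beta_i\colon Y_i\to UB$ with $\beta_is_i=Uf\alpha_i$; these transpose under $F\dashv U$ to morphisms $\bar\alpha_i\colon FX_i\to A$ and $\bar\beta_i\colon FY_i\to B$ in $\Str(\mathbb L)$ satisfying $f\bar\alpha_i=\bar\beta_i Fs_i$. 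For each $i$ I would form the pushout $B_i$ in $\Str(\mathbb L)$ of the span $A\xleftarrow{\bar\alpha_i}FX_i\xrightarrow{Fs_i}FY_i$, producing morphisms $f_i\colon A\to B_i$ and $h_i\colon B_i\to B$ with $h_if_i=f$.

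Since $\kappa$-filtered colimits commute with pushouts in the locally presentable category $\Str(\mathbb L)$, and $F$ preserves all colimits, the colimit $P:=\colim_iB_i$ coincides with the pushout of $A\xleftarrow{\eps_A}FUA\xrightarrow{FUf}FUB$; the naturality $f\eps_A=\eps_BFUf$ induces a canonical $\pi\colon P\to B$. A key observation is that $\pi$ is $\cv$-split: if $\iota\colon FUB\to P$ is the pushout injection, then $U\iota\circ\eta_{UB}\colon UB\to UFUB\to UP$ is a section of $U\pi$, since $U\pi\circ U\iota\circ\eta_{UB}=U\eps_B\circ\eta_{UB}=1_{UB}$ by the triangle identity. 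Consequently, once each $B_i$ is shown to lie in $\cl$, one deduces that $P\in\cl$ by closure under $\kappa$-filtered colimits, and then that $B\in\cl$ by closure under $\cv$-split quotients applied to~$\pi$.

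The main obstacle is to verify that every $B_i$ belongs to $\cl$. Although $f_i$ is a regular epimorphism (being the pushout of the split epi $Fs_i$), pushouts of $\cv$-split epimorphisms need not remain $\cv$-split, since $U$ does not preserve pushouts. The approach I would pursue is to exploit the splitting $\sigma_i\colon Y_i\to X_i$ of $s_i$: a short pushout computation using $s_i\sigma_i=1_{Y_i}$ shows that the injection $\iota_Y\colon FY_i\to B_i$ equals $f_i\bar\alpha_iF\sigma_i$, so that $B_i$ is generated by the image of $f_i$ and is in fact isomorphic to the coequalizer in $\Str(\mathbb L)$ of the parallel pair $\bar\alpha_i,\bar\alpha_i\circ F(\sigma_is_i)\colon FX_i\rightrightarrows A$ built from the idempotent $\sigma_is_i$ on $X_i$.

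The delicate step is then to show that this coequalizer is $U$-split---equivalently, that $f_i$ is a $\cv$-split quotient of $A$---by combining the splitting of the idempotent $T(\sigma_is_i)=UF(\sigma_is_i)$ in the Cauchy-complete category $\cv$ with the algebra structure on~$A$ to construct a section of $Uf_i$. If this contracting-homotopy argument succeeds, closure under $\cv$-split quotients places each $B_i$ in~$\cl$ and completes the proof; this interplay between the monadicity of $U$, the monoidal/adjoint structure, and the splitting of $s_i$ is the technical heart of the lemma.
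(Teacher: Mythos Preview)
Your approach diverges substantially from the paper's, and the step you yourself flag as ``delicate'' is a genuine gap that cannot be filled. The paper does not attempt to decompose $f$ at all: it invokes Theorem~\ref{l-birkhoff1} to present the enriched $\kappa$-Birkhoff subcategory $\cl$ as $\Mod(\mathbb E)$ for some $\kappa$-ary equational theory $\mathbb E$, and then verifies each equation $(s=t)$, with $s,t\colon FY\to FX$ and $X\in\cv_\kappa$, directly from $\kappa$-purity. Any $g\colon FX\to B$ transposes to a map $X\to UB$, which by $\kappa$-purity of $Uf$ lifts along $Uf$; transposing back gives $g=f\bar g$ for some $\bar g\colon FX\to A$, whence $gs=f\bar gs=f\bar gt=gt$ since $A\in\Mod(\mathbb E)$. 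This is unenriched satisfaction, and closure of $\cl$ under powers upgrades it to enriched satisfaction via Remark~\ref{unenrich-sat}. The whole argument is a few lines.

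Your ``delicate step''---that each $f_i\colon A\to B_i$ be $\cv$-split---fails in general, and with it the whole strategy, since the intermediate $B_i$ need not lie in $\cl$ at all. Already for the empty language (so $\Str(\mathbb L)=\cv$ and $U=F=\id$), your $B_i$ is the pushout in $\cv$ of the split epimorphism $s_i$ along $\alpha_i$, and pushouts of split epimorphisms are merely regular, not split: in $\cv=\Ab$, pushing out $\mathbb Z\to 0$ along $\mathbb Z\xrightarrow{\times 2}\mathbb Z$ yields $\mathbb Z\twoheadrightarrow\mathbb Z/2$. Nothing about the filtered presentation of a pure epimorphism excludes such legs. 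Concretely, take $\cl$ to be the torsion-free abelian groups, which is an enriched $\omega$-Birkhoff subcategory of $\Ab$; your intermediate $B_i=A/\alpha_i(\ker s_i)$ can acquire torsion even when $A$ and $B$ are torsion-free, so no closure property of $\cl$ places the $B_i$ inside. The monadicity of $U$ and the idempotent $\sigma_is_i$ give you a reflexive pair, but they do not produce a section of $Uf_i$, because $U$ does not preserve the pushout defining $B_i$. The missing idea is precisely to bypass these intermediate objects by using the equational description of $\cl$.
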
 
\begin{proof}
	Let $\cl$ be an enriched $\kappa$-Birkhoff subcategory of $\Str(\mathbb L)$. Following Theorem~\ref{l-birkhoff1}, $\cl$ is given by a $\kappa$-ary
	equational theory $\mathbb E$. Let $(s=t)$ be from $\mathbb E$, so that
	$s,t\colon FY\to FX$ where $X$ and $Y$ are $\kappa$-presentable. Consider
	$f\colon A\to B$ where $A\in\cl$. If $Uf$ is $\kappa$-pure, every 
	$g\colon FX\to B$ factors through $f$. Consequently, $B$ satisfies the equation $(s=t)$ in the unenriched sense. Since $\cl$ is closed under powers the satisfaction is actually enriched (Remark~\ref{unenrich-sat}); thus $B\in \cl$.
\end{proof}

\begin{rem}
	A Birkhoff subcategory closed under quotients $f\colon A\to B$ such that $Uf$
	is $\kappa$-pure does not need to be a $\kappa$-Birkhoff subcategory.
	An example for $\cv=\Set^\mathbb N$ and $\kappa=\omega$ is given in \cite{ARV}. Since epimorphisms in $\Set^\mathbb N$ split, every Birkhoff subcategory is $\omega$-Birkhoff.
\end{rem}

Recall that an initial object $0$ of $\cv$ is called \textit{strict initial}
if every morphism to $0$ is an isomorphism. We say that a category $\ck$ is \textit{strongly connected} if for every pair of objects $K$ and $K'$ of $\ck$, where $K'$ is not strict initial, there is a morphism $K\to K'$. (A slightly different notion, with initial rather than strict initial objects, was considered in~\cite{Bu}.)

\begin{propo}
	Let $\mathbb L$ be a $\lambda$-ary language, $\lambda\leq\kappa\leq\infty$, and $\cv_0$ be strongly connected. 
	Then enriched $\kappa$-Birkhoff subcategories of $\Str(\mathbb L)$ are precisely classes closed under products, powers, substructures, and quotients $f\colon A\to B$ for which $Uf$ is $\kappa$-pure.
\end{propo}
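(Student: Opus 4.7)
I would start with the forward direction: products, powers, and substructures appear directly in the definition of enriched $\kappa$-Birkhoff, while closure under $\kappa$-pure quotients is exactly the preceding lemma. This also subsumes closure under $\cv$-split quotients, as every split epimorphism is $\kappa$-pure.

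For the converse, assume $\cl$ is closed under these four operations; only closure under $\kappa$-directed colimits remains to be verified for the enriched $\kappa$-Birkhoff property ($\cv$-split quotients being a special case of $\kappa$-pure ones). Given a $\kappa$-directed $D\colon\ci\to\cl$ with colimit $A=\colim D$ in $\Str(\mathbb L)$, the plan is to exhibit $B\in\cl$ and a morphism $p\colon B\to A$ in $\Str(\mathbb L)$ such that $Up$ is $\kappa$-pure; then $A\in\cl$ by closure under $\kappa$-pure quotients.

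Strong connectedness enters the construction in two places. First, I would restrict to the cofinal subdiagram $\ci_0\subseteq\ci$ of indices $i$ with $UD(i)$ non-strict-initial. This is cofinal because a morphism $UD(j)\to UD(k)$ with non-strict-initial source cannot land in a strict-initial target (the map would be forced to be an isomorphism), and the edge case $\ci_0=\emptyset$ collapses the whole diagram to the strict-initial $\mathbb L$-structure, which already lies in $\cl$ since $D(i)\in\cl$. Assuming $\ci_0\neq\emptyset$, set $P=\prod_{i\in\ci_0}D(i)\in\cl$, and for each $j\in\ci_0$ let $B_j\subseteq P$ be the equalizer in $\Str(\mathbb L)$ of the parallel pairs $c_i\pi_i,c_j\pi_j\colon P\to A$ for $i\geq j$, where $c_i$ denotes the colimit cocone. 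Each $B_j$ is a substructure of $P$, so $B_j\in\cl$. The system is nested ($B_j\subseteq B_{j'}$ for $j\leq j'$), and its directed union $B=\bigcup_j B_j$ remains a subobject of $P$ in $\Str(\mathbb L)$, since $\kappa$-filtered colimits of monos are monos in the locally presentable $\Str(\mathbb L)$; hence $B\in\cl$ by closure under substructures. The morphisms $c_j\pi_j\colon B_j\to A$ form a compatible cocone (using the cocone identity $c_{j'}\circ D(j\to j')=c_j$ together with the equalizer condition), inducing the desired $p\colon B\to A$ in $\Str(\mathbb L)$.

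The decisive step is verifying $\kappa$-purity of $Up$, which is where strong connectedness becomes indispensable. Given $X\in\cv_\kappa$ and $f\colon X\to UA$, the $\kappa$-filteredness of $\ci_0$ together with $\kappa$-presentability of $X$ yields a factorization $f=Uc_k\circ g$ for some $k\in\ci_0$ and $g\colon X\to UD(k)$. I would construct a lift $\tilde f\colon X\to UB$ coordinatewise: set $\tilde f_i=UD(k\to i)\circ g$ for $i\geq k$, so that $Uc_i\circ\tilde f_i=Uc_k\circ g=f$, and for the remaining $i\not\geq k$ invoke strong connectedness of $\cv_0$ to pick any morphism $\tilde f_i\colon X\to UD(i)$---available precisely because $UD(i)$ is non-strict-initial for all $i\in\ci_0$. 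The resulting family is coherent from $k$, so $\tilde f$ factors through $UB_k\subseteq UB$ and satisfies $Up\circ\tilde f=f$. The main obstacle is exactly this free filling-in of coordinates $\tilde f_i$ for indices not above $k$: in the absence of strong connectedness of $\cv_0$ there is no reason for any morphism $X\to UD(i)$ to exist, and the argument collapses. A minor secondary check, that $B=\bigcup_j B_j$ is genuinely an $\mathbb L$-substructure of $P$ and not merely a $\cv$-subobject, follows from its computation as a $\kappa$-filtered colimit of substructures inside $\Str(\mathbb L)$.
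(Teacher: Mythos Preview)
Your argument is correct and follows the same overall strategy as the paper: reduce closure under $\kappa$-directed colimits to closure under products, substructures, and $\kappa$-pure quotients by exhibiting the colimit as a $\kappa$-pure quotient of a substructure of the product. The paper differs in one technical detail worth noting. Where you define $B_j$ as an equalizer inside $P$ and then verify $\kappa$-purity of $Up$ by hand (lifting a map out of a $\kappa$-presentable $X$ coordinatewise, using strong connectedness to fill in the indices not above $k$), the paper instead defines the analogous subobject $A_j$ as the pullback of the projection $q_j\colon\prod_{i}K_i\to\prod_{i\ge j}K_i$ along the canonical mono $u_j\colon K_j\to\prod_{i\ge j}K_i$. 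The point of this is that strong connectedness is used \emph{upfront} to split $q_j$ (by choosing maps $K_j\to K_i$ for $i<j$), so each $f_j\colon A_j\to K_j$ is a split epimorphism; the $\kappa$-purity of $Uf\colon UA\to UK$ then follows immediately from the characterization of $\kappa$-pure epimorphisms as $\kappa$-filtered colimits of split epimorphisms in $\cv^{\to}$. Your direct lifting argument and the paper's split-epi-at-each-stage argument are two sides of the same coin, and both invoke strong connectedness at the same essential point.
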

\begin{proof}
	One direction is given by the lemma above; for the other it is enough to show closure under $\kappa$-directed colimits. Let $(k_{ij}\colon K_i\to K_j)_{i\leq j\in M}$ be a $\kappa$-directed diagram in $\Str(\mathbb L)$ where $K_i\in\cl$ for all $i\in M$. Let $(k_i\colon K_i\to K)_{i\in M}$ be its colimit in $\Str(\mathbb L)$.
	We can assume that no $K_i$ is strictly initial. We will find a subalgebra $A$ of the product $\prod_{i\in M}K_i$ such that $K$ is a quotient $f\colon A\to K$ such that $Uf$ is $\lambda$-pure. This will be enough, given the closure properties.
	
	Consider products $\prod_{i\in M}K_i$ and $\prod_{i\geq j}K_i$ with projections $p_i\colon \prod_{i\in M}K_i\to K_i$ and
	$p^j_i\colon \prod_{i\geq j}K_i\to K_i$. Let $q_j\colon \prod_{i\in M}K_i\to\prod_{i\geq j}K_i$ be the projection, i.e., $p^j_iq_j=p_i$.
	Since $\cv_0$ is strongly connected, there are morphisms $u_{ji}\colon K_j\to K_i$ for $i< j$. Let $s_j\colon \prod_{i\geq j}K_i\to\prod_{i\in M}K_i$ be such
	that $p_is_j=p^j_i$ for $i\geq j$ and $p_is_j=u_{ji}p^j_j$ for $i<j$.
	Clearly $q_js_j=\id$, and hence $q_j$ is a split epimorphism.
	
	Let $u_j\colon K_j\to\prod_{i\geq j}K_i$ such that $p^j_iu_j=k_{ji}$. Then
	$u_j$ is a (split) monomorphism. Consider the pullback
	\begin{center}
		\begin{tikzpicture}[baseline=(current  bounding  box.south), scale=2]
			
			\node (a0) at (0,0.9) {$A_j$};
			\node (b0) at (1.1,0.9) {$\prod\limits_{i\in M}K_i$};
			\node (c0) at (0,0) {$K_j$};
			\node (d0) at (1.1,0) {$\prod\limits_{i\geq j}K_i$};
			
			\path[font=\scriptsize]
			
			(a0) edge [->] node [above] {$v_j$} (b0)
			(a0) edge [->] node [left] {$f_j$} (c0)
			(b0) edge [->] node [right] {$q_j$} (d0)
			(c0) edge [->] node [below] {$u_j$} (d0);
		\end{tikzpicture}	
	\end{center} 
	Then $v_j$ is a monomorphism and $f_j$ is a split epimorphism.	
	
	For $j'>j$, we get a morphism $a_{jj'}\colon A_j\to A_{j'}$ such that
	$f_{j'}a_{jj'}=k_{jj'}f_j$ and $v_{j'}a_{jj'}=q_{jj'}v_j$ where
	$q_{jj'}\colon \prod_{i\geq j}K_i\to\prod_{i\geq j'}K_i$ is the projection.
	Then $a_{jj'}$ form a directed diagram and we can take its colimit 
	$a_j\colon A_j\to A$. The induced morphism $v\colon A\to\prod_{i\in m}K_i$ is a monomorphism (by \cite[1.59]{AR}) and the induced morphism $Uf\colon UA\to UK$
	is a $\lambda$-pure epimorphism (by \cite[Proposition 3]{AR2}).
\end{proof}

See also Theorem~\ref{birkhoff4} where we consider closure under {\em $\ce$-quotients}, where $\ce$ is the left class of a factorization system on $\cv$.

\begin{rem}
The replacement of filtered colimits by pure quotients
is also consi\-de\-red in the recent paper \cite{Ka}.
\end{rem}

\section{Multi-sorted languages and theories}\label{multi-sort}

As classical single-sorted universal algebra has its multi-sorted version~\cite{BL}, so does our enriched theory. In this section we define multi-sorted languages, structures, terms, and equational theories. We then prove Theorem~\ref{multi-char} extending the ordinary results of~\cite[3.A]{AR}.

\begin{defi}
	A {\em multi-sorted language} $\mathbb L$ (over $\cv$) is the data of a set $S$ of sorts and a set of function symbols of the form
	$$f\colon((X_t)_{t\in T};(Y_u)_{u\in U})$$ 
	where $T,U\subseteq S$ and the arities $X_t$ and $Y_u$ are objects of $\cv$.
	The language $\mathbb L$ is called {\em $\lambda$-ary} if all the arities appearing in $\mathbb L$ lie in $\cv_\lambda$ and each $T,U$ above is of cardinality less than $\lambda$.
\end{defi}

We introduce the notion of $\mathbb L$-structure.

\begin{defi}
	Given a multi-sorted language $\mathbb L$, an {\em $\mathbb L$-structure} is the data of a family $A:=(A_s)_{s\in S}$ of objects in $\cv$ together with a morphism $$f_A\colon \textstyle\prod\limits_{t\in T} A_t^{X_t}\to \textstyle\prod\limits_{u\in U} A_u^{Y_u}$$ in $\cv$ for any function symbol $f\colon((X_t)_{t\in T};(Y_u)_{u\in U})$ in $\mathbb L$.
	
	A {\em morphism of $\mathbb L$-structures} $h\colon A\to B$ is determined by a family of morphisms $(h_s\colon A_s\to B_s)_{s\in S}$ in $\cv$ making the following square commute
	\begin{center}
		\begin{tikzpicture}[baseline=(current  bounding  box.south), scale=2]
			
			\node (a0) at (0,1) {$\prod\limits_{t\in T} A_t^{X_t}$};
			\node (b0) at (1.9,1) {$\prod\limits_{t\in T} B_t^{X_t}$};
			\node (c0) at (0,0) {$\prod\limits_{u\in U} A_u^{Y_u}$};
			\node (d0) at (1.9,0) {$\prod\limits_{u\in U} B_u^{Y_u}$};
			
			\path[font=\scriptsize]
			
			(a0) edge [->] node [above] {$\prod_{t\in T} h_t^{X_t}$} (b0)
			(a0) edge [->] node [left] {$f_A$} (c0)
			(b0) edge [->] node [right] {$f_B$} (d0)
			(c0) edge [->] node [below] {$\prod_{u\in U} h_u^{Y_u}$} (d0);
		\end{tikzpicture}	
	\end{center} 
	for any $f$ in $\mathbb L$.
\end{defi}

Since to give a map $\textstyle\prod\limits_{t\in T} A_t^{X_t}\to \textstyle\prod\limits_{u\in U} A_u^{Y_u}$ is the same as to give $\textstyle\prod\limits_{t\in T} A_t^{X_t}\to A_u^{Y_u}$ for each $u\in U$, in the definition above it would be enough to consider function symbols with a single output arity. Thus every multi-sorted language $\mathbb L$ can be replace by an $\mathbb L'$ where all the output arities are singletons; $\mathbb L$-terms and $\mathbb L'$-terms (as introduced below) will coincide thanks to the rule allowing tuples.

We keep this apparently more general approach since it will make it easier to write down equations in Theorem~\ref{multi-char}.

\begin{rem}\label{S_lambda}
	Given any set $S$, the $\cv$-category $\cv^S:=\prod_{s\in S}\cv$ is locally $\lambda$-presentable where the full subcategory $(\cv^S)_\lambda$ of the $\lambda$-presentable objects is spanned by those families $(X_s)_{s\in S}$ for which each $X_s$ is in $\cv_\lambda$ and $X_s\neq 0$ only for less than $\lambda$ indices. Equivalently, we identify the objects of $(\cv^S)_\lambda$ with families $(X_s)_{s\in S'}$ where the $X_s$ are $\lambda$-presentable in $\cv$ and $S'\subseteq S$ is $\lambda$-small.
\end{rem}

As in the single-sorted case we can build the $\cv$-category of $\mathbb L$-structures. Consider now the ordinary category $\cc(\mathbb L)^\lambda$ which has as objects the same of $(\cv^S)_\lambda$ (with the identification explained in Remark~\ref{S_lambda} above) and whose morphisms are freely generated under composition by the function symbols of $\mathbb L$, so that $f\colon((X_t)_{t\in T};(Y_u)_{u\in U})$ in $\mathbb L$ has domain $(X_t)_{t\in T}$ and codomain $(X_u)_{u\in U}$ in $\cc(\mathbb L)^\lambda$. Let now $\cc(\mathbb L)_\cv^\lambda$ be the free $\cv$-category on $\cc(\mathbb L)^\lambda$; then we take the pushout in $\cv\tx{-}\bo{Cat}$

\begin{center}
	\begin{tikzpicture}[baseline=(current  bounding  box.south), scale=2]
		
		\node (a0) at (0,0.8) {$|\ce|$};
		\node (b0) at (1.1,0.8) {$\cc(\mathbb L)_\cv^\lambda$};
		\node (c0) at (0,0) {$\ce^{\op}$};
		\node (d0') at (0.92,0.15) {$\ulcorner$};
		\node (d0) at (1.1,0) {$\Theta_\mathbb L^\lambda$};
		
		\path[font=\scriptsize]
		
		(a0) edge [->] node [above] {$j$} (b0)
		(a0) edge [->] node [left] {$i$} (c0)
		(b0) edge [->] node [right] {$H_\mathbb L$} (d0)
		(c0) edge [->] node [below] {$\theta_\mathbb L^\lambda$} (d0);
	\end{tikzpicture}	
\end{center} 
where $|(\cv^S)_\lambda|$ is the free $\cv$-category on the set of objects of $(\cv^S)_\lambda$, and $i$ and $j$ are the identity on objects inclusions. It follows that $H_\mathbb L$ and $\theta_\mathbb L^\lambda$ are the identity on objects as well.

\begin{defi}\label{cat-multi-L-struct}
	The $\cv$-category $\Str(\mathbb L)$ on a $\lambda$-ary multi-sorted language $\mathbb L$ is defined as the pullback
	\begin{center}
		\begin{tikzpicture}[baseline=(current  bounding  box.south), scale=2]
			
			\node (a0) at (0,0.8) {$\Str(\mathbb L)$};
			\node (a0') at (0.3,0.6) {$\lrcorner$};
			\node (b0) at (1.5,0.8) {$[\Theta_\mathbb L^\lambda,\cv]$};
			\node (c0) at (0,0) {$\cv^S$};
			\node (d0) at (1.5,0) {$[((\cv^S)_\lambda)^{\op},\cv]$};
			
			\path[font=\scriptsize]
			
			(a0) edge [right hook->] node [above] {} (b0)
			(a0) edge [->] node [left] {$U_\mathbb L$} (c0)
			(b0) edge [->] node [right] {$[\theta_\mathbb L^\lambda,\cv]$} (d0)
			(c0) edge [right hook->] node [below] {$\cv^S(K,1)$} (d0);
		\end{tikzpicture}	
	\end{center} 
	where $K\colon(\cv^S)_\lambda\hookrightarrow\cv^S$ is the inclusion.
\end{defi}
 
As for the single-sorted case, $\Str(\mathbb L)$ does not depend on the choice of $\lambda$.
 
Terms are constructed recursively as in Definition~\ref{terms} starting from the morphisms of $\cv^S$ and the function symbols of the language, and then closing under powers and superposition. Similarly we define interpretation of terms and multi-sorted equational theories.

\begin{rem}\label{multi-products}
	In the single-sorted case we saw that in Theorem~\ref{char-single} the $\cv$-categories of models of $\lambda$-ary equational theories can be characterized as those of the form $\lambda\tx{-Pw}(\ct,\cv)$ for a $\cv_\lambda$-theory $\ct$. And in Remark~\ref{products} we saw that preservation of $\lambda$-small products was implied by that of $\lambda$-small powers.
	
	This changes in the multi-sorted case, see Theorem~\ref{multi-char} below. The difference now is that the $\cv$-functor $\cv^S(K,1)\colon \cv^S\to [(\cv^S)_\lambda^{\op},\cv]$ restricts to an equivalence
	$$\cv^S\xrightarrow{\ \simeq\ }\lambda\tx{-PP} ((\cv^S)_\lambda^{\op},\cv)$$
	where on the right we consider those $\cv$-functor preserving $\lambda$-small powers and $\lambda$-small products. Here, preservation of products is necessary to obtain an inverse: this sends $F\colon ((\cv^S)_\lambda)^{\op}\to\cv$ to the family $(FI_s)_{s\in S}$, where $I_s\in(\cv^S)_\lambda$ is the $S$-tuple defined by the unit $I$ at $s$ and $0$ everywhere else.
\end{rem}

Below, we say that a parallel pair of maps $(f,g)$ in $\ck$ is {\em $\hat \cg$-split} if the pair of maps $(\ck(G,f),\ck(G,g))$ is split in $\cv$ for any $G\in\cg$. A set of objects $\cg\subseteq\ck$ is called {\em $\hat\cg$-projective} if for any $G\in\cg$ the $\cv$-functor $\ck(G,-)$ preserves coequalizers of $\hat \cg$-split pairs.

\begin{theo}\label{multi-char}
	The following are equivalent for a $\cv$-category $\ck$: \begin{enumerate}
		\item $\ck\simeq\Mod(\mathbb E)$ for a $\lambda$-ary multi-sorted equational theory $\mathbb E$;
		\item $\ck\simeq\tx{Alg}(T)$ for a $\lambda$-ary monad $T$ on $\cv^S$, for some set $S$;
		\item $\ck$ is cocomplete and has a strong generator $\cg\subseteq\ck$ made of  $\lambda$-presentable and $\hat\cg$-projective objects;
		\item $\ck\simeq\lambda\tx{-PP}(\cc,\cv)$ is equivalent to the full subcategory of $[\cc,\cv]$ spanned by those $\cv$-functors preserving $\lambda$-small products and $\lambda$-small powers, for some small $\cc$ with such limits.
	\end{enumerate}
\end{theo}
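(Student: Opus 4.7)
The plan is to mirror the structure of the proof of Theorem~\ref{char-single}, carrying out the cycle $(3)\Rightarrow(2)\Rightarrow(1)\Rightarrow(3)$ together with $(3)\Leftrightarrow(4)$, with the base $\cv$ replaced everywhere by $\cv^S$ and with careful bookkeeping of the distinction between $\lambda$-small products and $\lambda$-small powers flagged by Remark~\ref{multi-products}. Throughout, let $I_s\in\cv^S_\lambda$ denote the $S$-tuple that is the unit $I$ in coordinate $s$ and the initial object $0$ elsewhere, so that $\{I_s\}_{s\in S}$ is a strong generator of $\cv^S$ and any object of $\cv^S$ is (functorially) recovered as $\prod_s(-)_s$ with $(-)_s=\cv^S(I_s,-)$.

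For $(3)\Rightarrow(2)$, given the generator $\cg$, define $U_\cg\colon\ck\to\cv^\cg$ by $U_\cg(K)_G=\ck(G,K)$. Each $\ck(G,-)$ preserves $\lambda$-filtered colimits by $\lambda$-presentability of $G$, and coequalizers of $\hat\cg$-split pairs by $\hat\cg$-projectivity; hence $U_\cg$ is $\lambda$-ary and preserves $U_\cg$-split coequalizers. Since $\cg$ is a strong generator, $U_\cg$ is conservative, and since $\ck$ is cocomplete, $U_\cg$ has a left adjoint (sending $I_G$ to $G$). Enriched Beck monadicity now makes $U_\cg$ strictly monadic with a $\lambda$-ary monad $T$ on $\cv^\cg$. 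For $(2)\Rightarrow(1)$, adapt Proposition~\ref{monad->equations}: by the multi-sorted version of \cite[2.4]{BG}, $\tx{Alg}(T)$ is the $\cv$-category of models of a $(\cv^S)_\lambda$-theory $H\colon(\cv^S_\lambda)^{\op}\to\ct$ preserving $\lambda$-small powers and products; take a multi-sorted language $\mathbb L$ with a function symbol $\bar f$ for each morphism $f$ of $\ct$ whose input/output arities decompose into sort-tuples according to the canonical decomposition of objects of $\cv^S_\lambda$, and impose the same four families of equations as in the proof of Proposition~\ref{monad->equations} (functoriality, units, compatibility with $H$, and compatibility with powers). Here the crucial point is that $\lambda$-small products on the arities are already \emph{built into} the definition of $\mathbb L$-structure from Definition~\ref{cat-multi-L-struct}, since an $(X_t)_{t\in T}$-input is interpreted via $\prod_t A_t^{X_t}$, while powers still need to be imposed equationally.

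For $(1)\Rightarrow(3)$, generalise Proposition~\ref{eqations->monad}: $\Mod(\mathbb E)$ sits inside $\Str(\mathbb L)$ as an intersection of bipullbacks in the $2$-category of locally $\lambda$-presentable $\cv$-categories, continuous $\lambda$-ary $\cv$-functors, and $\cv$-natural transformations, hence is locally $\lambda$-presentable and the forgetful $U_\mathbb E\colon\Mod(\mathbb E)\to\cv^S$ is continuous, $\lambda$-ary and strictly monadic with left adjoint $F$. Set $G_s:=F(I_s)$; then $U_\mathbb E\cong(\ck(G_s,-))_{s\in S}$, so $\{G_s\}_{s\in S}$ is a $\lambda$-presentable strong generator, and strict monadicity yields the $\hat\cg$-projectivity. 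For $(3)\Leftrightarrow(4)$, proceed as in \cite[Example~44(vi)]{BG} adapted to the multi-sorted setting: given (3), let $\cc$ be the closure of $\cg^{\op}$ in $\ck^{\op}$ under $\lambda$-small products and powers (a small $\cv$-category with such limits), and define the restricted Yoneda embedding $\ck\to[\cc,\cv]$; $\lambda$-presentability of each $G\in\cg$ plus the strong generator property give a fully faithful equivalence $\ck\simeq\lambda\tx{-PP}(\cc,\cv)$, and conversely the representables $\cc(-,c)\in\lambda\tx{-PP}(\cc,\cv)$ for $c$ ranging over a set of sorts (e.g.\ a set of ``atomic'' objects of $\cc$) supply the required generator; evaluating at the ``sort objects'' gives the $\lambda$-ary monadic forgetful functor to some $\cv^S$.

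The most delicate step, and the one that distinguishes this multi-sorted statement from Theorem~\ref{char-single}, is handling the appearance of $\lambda$-small products in $(4)$ alongside $\lambda$-small powers, as stressed in Remark~\ref{multi-products}. In the single-sorted case, any $\cv$-functor $\cv_\lambda^{\op}\to\cv$ preserving $\lambda$-small powers automatically preserved $\lambda$-small products (being of the form $A^{(-)}$); in the multi-sorted case this fails, and one must retain both preservation conditions in $\cc$ to recover $\cv^S$ as $\lambda\tx{-PP}((\cv^S_\lambda)^{\op},\cv)$. Concretely, in the $(1)\Rightarrow(3)\Rightarrow(2)\Rightarrow(1)$ cycle this means the set of sorts of the reconstructed language $\mathbb L$ in $(1)$ matches (up to equivalence) the strong generator $\cg$ of $(3)$, which is in turn determined by the choice of ``sort objects'' in $\cc$. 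All other ingredients—bipullbacks of locally $\lambda$-presentable categories, enriched Beck monadicity, and the Linton-style language construction from Proposition~\ref{monad->equations}—go through verbatim once the multi-sorted preservation issue is handled.
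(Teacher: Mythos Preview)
Your cycle $(3)\Rightarrow(2)\Rightarrow(1)\Rightarrow(3)$ is essentially the same as the paper's handling of those three conditions (the paper records $(3)\Leftrightarrow(2)$ and $(1)\Rightarrow(2)$ separately, then closes via $(4)$, but the content of the arguments is identical). The genuine difference is in how condition $(4)$ is linked to the others. The paper does not attempt a direct $(3)\Leftrightarrow(4)$; instead it proves $(2)\Rightarrow(4)$ via the Bourke--Garner monad/theory correspondence, and then $(4)\Rightarrow(1)$ by the following concrete move: given an arbitrary small $\cc$ with $\lambda$-small products and powers, it takes $S=\tx{Ob}(\cc)$ itself as the set of sorts, defines $F\colon(\cv^S_\lambda)^{\op}\to\cc$ by $(X_C)_{C\in S'}\mapsto\prod_{C\in S'}C^{X_C}$, and takes the (identity-on-objects, fully faithful) factorization of $F$ to replace $\cc$ by an equivalent $\cv^S_\lambda$-theory $\ct$; one then runs the Linton-style argument of Proposition~\ref{monad->equations} on $\ct$.

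Your $(4)\Rightarrow(3)$ has a gap precisely at the point the paper's construction addresses. You propose to extract a generator from ``a set of atomic objects of $\cc$'', but an arbitrary small $\cv$-category with $\lambda$-small products and powers carries no canonical notion of atomic object, and none need exist. The fix---which also makes cocompleteness of $\lambda\tx{-PP}(\cc,\cv)$ transparent---is exactly the paper's: take \emph{all} objects of $\cc$ as sorts (equivalently, take all representables $\cc(c,-)$ as the generator). With that choice the representables are $\lambda$-presentable (since $\lambda$-filtered colimits in $\lambda\tx{-PP}(\cc,\cv)$ are pointwise), $\hat\cg$-projective (since coequalizers of pairs split at every evaluation are pointwise), and jointly conservative by Yoneda; and cocompleteness follows from reflectivity in $[\cc,\cv]$. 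Your $(3)\Rightarrow(4)$ via closure of $\cg^{\op}$ is a reasonable alternative to the paper's $(2)\Rightarrow(4)$, though it needs the verification that the restricted Yoneda embedding is essentially surjective onto $\lambda\tx{-PP}(\cc,\cv)$, which the paper sidesteps by going through the monad.
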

\begin{proof}
	$(3)\Rightarrow(2)$. Note that the $\cv$-category $\ck$ is locally $\lambda$-presentable and that $$U_\ck:=\textstyle\prod\limits_{G\in\cg}\ck(G,-)\colon\ck\longrightarrow\cv^S,$$ 
	where $S=\tx{Ob}(\cg)$, is (by hypothesis) continuous, $\lambda$-ary, and preserves coequalizers of $U_\ck$-split pairs. Thus $U_\ck$ has a left adjoint and is $\lambda$-ary monadic by the monadicity theorem.
	
	$(2)\Rightarrow(3)$. Conversely, now we have a monadic and $\lambda$-ary $\cv$-functor $U\colon\ck\to\cv^S$, with left adjoint $F$. Taking the values of $F$ at the singletons $I_s$, given as in Remark~\ref{multi-products}, we obtain a family $\{G_s\}_{s\in S}$ of objects of $\ck$ for which $U\cong\prod_{s\in S}\ck(G_s,-)$. Thus $\{G_s\}_{s\in S}$ satisfies the required properties.
	
	$(1)\Rightarrow(2)$. This is proved in the same manner as Proposition~\ref{eqations->monad}; just taking multiple copies of $\cv$.
	
	$(2)\Rightarrow(4)$. This is essentially a consequence of \cite{BG}. Consider the $(\cv^S)_\lambda$-theory $H\colon(\cv^S)_\lambda\to\ct$ corresponding to $T$; note that $H$ preserves all $\lambda$-small limits. Then by \cite[Theorem~19]{BG} the $\cv$-category $\tx{Alg}(T)$ is given by a pullback as in Definition~\ref{cat-multi-L-struct} with $\ct$ instead of $\Theta_\mathbb L^\lambda$. Now, by Remark~\ref{multi-products}, we have an equivalence 
	$$\cv^S(K,1)\colon\cv^S\xrightarrow{\ \simeq\ } \lambda\tx{-Pw}((\cv^S)_\lambda^{\op},\cv).$$ 
	Thus, since $\lambda\tx{-PP}(\ct,\cv)$ is the pullback of the inclusion $\lambda\tx{-PP}((\cv^S)_\lambda^{\op},\cv)\hookrightarrow[(\cv^S)_\lambda^{\op},\cv]$ along $[H,\cv]$; then it follows that also $ \ck\simeq\tx{Alg}(T)\simeq \lambda\tx{-PP}(\ct,\cv), $
	concluding the implication.
	
	$(4)\Rightarrow(1)$. Given $\cc$, let $S:=\tx{Ob}(\cc)$ be its set of objects; first we modify $\cc$ into a $(\cv^S)_\lambda$-theory, and then we argue as in Proposition~\ref{monad->equations}. 
	
	Consider the $\cv$-functor $F\colon((\cv^S)_\lambda)^{\op}\to\cc$ obtained by sending $(X_C)_{C\in S'}$, with $S'\subseteq S$ $\lambda$-small, to $\prod_{C\in S'}C^{X_C}$ in $\cc$. This is essentially surjective on objects (considering the images of the singletons on the unit), and we can take its (identity on objects, fully faithful) factorization. We then obtain an equivalence $E\colon \ct\to\cc$ (since $F$ was essentially surjective) and an identity on objects map $H\colon(\cv^S)_\lambda^{\op}\to\ct$ whose composite is $F$. It follows in particular that $H$ preserves $\lambda$-small products and $\lambda$-small powers (since $F$ does) and that $\ck\simeq\lambda\tx{-PP}(\ct,\cv)$; this implies that $\ck$ fits into the bipullback below (using the equivalence in Remark~\ref{multi-products}).
	\begin{center}
		\begin{tikzpicture}[baseline=(current  bounding  box.south), scale=2]
			
			\node (a0) at (0,0.8) {$\ck$};
			\node (a0') at (0.3,0.6) {$\lrcorner$};
			\node (b0) at (1.6,0.8) {$\lambda\tx{-Pw}(\ct,\cv)$};
			\node (c0) at (0,0) {$\cv^S$};
			\node (d0) at (1.6,0) {$\lambda\tx{-Pw}((\cv^S)_\lambda^{\op},\cv)$};
			
			\path[font=\scriptsize]
			
			(a0) edge [right hook->] node [above] {} (b0)
			(a0) edge [->] node [left] {} (c0)
			(b0) edge [->] node [right] {$[H,\cv]$} (d0)
			(c0) edge [right hook->] node [below] {} (d0);
		\end{tikzpicture}	
	\end{center} 
	Finally, it is now enough to argue as in the proof of Proposition~\ref{monad->equations} to show that $\ck$ can be described as follows: consider the multi-sorted language $\mathbb L$ with a function symbol $\overline f\colon((X_t)_{t\in T};(Y_u)_{u\in U})$ for any morphism $f$ of domain and codomain in $\ct$ equal to the input and output arities of $\overline f$. Note that for any map $g$ in $(\cv^S)_\lambda$ we have two different terms of the same arity given by $\theta_\mathbb L g$ and $\overline{Hg}$.
	The $\mathbb L$-theory $\mathbb E$ is given by the following equations:\begin{enumerate}
		\item[(a)] $\overline f( \overline g)=\overline{fg}$, for any composable maps $f,g$ in $\ct$;
		\item[(b)] $\overline{1_X} (\overline g)=\overline g$ and $\overline g( \overline{1_Y})=\overline g$, for any $g\colon X\to Y$ in $\ct$;
		\item[(c)] $\theta_\mathbb Lg=\overline{H(g)}$, for any morphism $g$ in $(\cv^S)_\lambda$;
		\item[(d)] $\overline{Z\otimes f} = \overline f^Z$ for any morphism $f\colon Y\to X$ in $\ct$ and $Z\in\cv_\lambda$; here $Z\otimes f\colon X\otimes Y\to Z\otimes X$ is the copower of $f$ by $Z$ in $\ct$.
	\end{enumerate}
	It is routine now to check that $\ck\simeq \Mod(\mathbb E)$.
\end{proof}

\appendix

\section{More on Birkhoff subcategories}\label{more-birk}
Here, we prove that the hypotheses of Theorem~\ref{birkhoff} and~\ref{l-birkhoff1} hold under some specific assumptions on $\cv$ and the arities of our languages.  

The condition that every strong epimorphism in $\cv$ is regular is certainly satisfied when $\cv_0$ is a regular category; that is, when regular epimorphisms in $\cv$ are stable under pullbacks. However, such a condition holds in a more general context. Below we give a sufficient condition for it to hold:

\begin{lemma}\label{strong=regular}
	Let $\ck$ be a ordinary category for which pullbacks of finite products of regular epimorphisms are epimorphisms. Then every strong epimorphism in $\ck$ is regular.
\end{lemma}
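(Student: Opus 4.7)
The plan is to show that every strong epimorphism $f\colon A\to B$ is the coequalizer of its kernel pair. Let $(p_1, p_2)\colon K\to A$ be the kernel pair of $f$ and $q\colon A\to C$ the coequalizer of this pair; since $fp_1 = fp_2$, we obtain a unique factorisation $f = gq$ with $g\colon C\to B$. The strategy is to prove that $g$ is a monomorphism. Once this is established, the strong epi $f$ factors as $gq$ with $g$ mono, so by the orthogonality of strong epis and monos there is a diagonal $d\colon B\to C$ with $gd = \id_B$ and $df = q$. Then $g$ is both a split epi and a mono, hence an isomorphism, so $f$ is isomorphic to the regular epi $q$ and is itself regular.

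To show $g$ is a monomorphism, let $(r_1, r_2)\colon L\to C$ be its kernel pair, and form the pullback of $q\times q\colon A\times A\to C\times C$ along $(r_1, r_2)\colon L\to C\times C$; call the pulled-back arrow $h\colon M\to L$ and the other projection $\pi = (\pi_1, \pi_2)\colon M\to A\times A$. Since $q$ is a regular epimorphism, $q\times q$ is a finite product of regular epimorphisms, so by the hypothesis $h$ is an epimorphism. Commutativity of the pullback square gives $q\pi_i = r_i h$ for $i = 1, 2$, whence
$f\pi_1 = gq\pi_1 = gr_1 h = gr_2 h = gq\pi_2 = f\pi_2.$
Thus $(\pi_1,\pi_2)$ factors through the kernel pair $(p_1,p_2)$ of $f$, yielding a morphism $k\colon M\to K$ with $p_i k = \pi_i$; and then $r_1 h = qp_1 k = qp_2 k = r_2 h$, because $q$ coequalizes $(p_1,p_2)$. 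As $h$ is an epimorphism we conclude $r_1 = r_2$, so $g$ is a monomorphism.

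The main obstacle is precisely the monomorphy of $g$, which is the only step where the hypothesis is used essentially: it promotes the equality $r_1 h = r_2 h$ on the epimorphic pullback $h$ of $q\times q$ to the required equality $r_1 = r_2$. The remaining work is the standard interplay between strong epis, monos, kernel pairs and their coequalizers, and requires only the (implicit) existence of these limits and colimits in $\ck$.
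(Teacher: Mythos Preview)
Your proof is correct and follows essentially the same approach as the paper's own argument: factor the strong epimorphism through the coequalizer of its kernel pair, then show the comparison map is a monomorphism by pulling back $q\times q$ and using the hypothesis. The only cosmetic difference is that you test monomorphy of $g$ via its kernel pair $(r_1,r_2)$, whereas the paper works directly with an arbitrary parallel pair equalized by the comparison map; the two are interchangeable here.
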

\begin{proof}
	Let $t\colon A\to B$ be a strong epimorphism in $\ck$. Consider the kernel pair $(h,k)\colon K\to A$ of $t$ and its coequalizer $q\colon A\to C$. So that there is an induced $s\colon C\to B$ such that $t=sq$, this is in particular a strong epimorphism (by the cancellativity property). To conclude it is enough to prove that $s$ is a monomorphism, since then it must be an isomorphism. For that consider any pair of maps $f,g\colon W\to C$ with $sf=sg$; then we can build the pullback of $q\times q\colon A\times A\to C\times C$ along $(f,g)\colon W\to C\times C$; this gives maps $f',g'$, and $e$ as in the solid diagram below.
	\begin{center}
		\begin{tikzpicture}[baseline=(current  bounding  box.south), scale=2]

			\node (a) at (0,0) {$A$};
			\node (a1) at (1,0) {$C$};
			\node (a2) at (2,0) {$B$};
			\node (b) at (0,-0.8) {$W'$};
			\node (b1) at (1,-0.8) {$W$};
			\node (z) at (-1,0) {$K$};
			
			\path[font=\scriptsize]
			
			(a) edge [->>] node [above] {$q$} (a1)
			(a1) edge [->>] node [above] {$s$} (a2)
			(b) edge [->>] node [below] {$q'$} (b1)
			
			([yshift=-1.5pt]z.east) edge [->] node [below] {$h$} ([yshift=-1.5pt]a.west)
			([yshift=1.5pt]z.east) edge [->] node [above] {$k$} ([yshift=1.5pt]a.west)
			([xshift=-1.5pt]b.north) edge [->] node [left] {$f'$} ([xshift=-1.5pt]a.south)
			([xshift=1.5pt]b.north) edge [->] node [right] {$g'$} ([xshift=1.5pt]a.south)
			([xshift=-1.5pt]b1.north) edge [->] node [left] {$f$} ([xshift=-1.5pt]a1.south)
			([xshift=1.5pt]b1.north) edge [->] node [right] {$g$} ([xshift=1.5pt]a1.south);
		\end{tikzpicture}
	\end{center}
	Here $q'$ is an epimorphism by hypothesis on $\ck$. By construction we have that $tf'=tg'$; thus there exists $p\colon W'\to K$ with $ht=f'$ and $kt=g'$. Since $q$ coequalizes $h$ and $k$, it follows that $qf'=qg'$, and hence $fq'=gq'$. But $q'$ is an epimorphism; thus $f=g$ and $s$ is a monomorphism.
\end{proof}

\begin{exams} Below we give a list of bases of enrichment for which strong and regular epimorphisms coincide.
	{\setlength{\leftmargini}{1.6em}
		\begin{enumerate}
			\item  If $\cv_0$ is a regular category then every strong epimorphism is regular. This includes examples such as the categories $\Set$ of sets, $\Ab$ of abelian groups, $\bo{GAb}$ of graded abelian groups, $\bo{DGAb}$ of differentially graded abelian groups, and $\bo{SSet}$ of simplicial sets.
			\item  $\cv=\Pos$ is the category of posets. Regular epimorphisms in $\Pos$ are in particular surjections, and hence satisfy the hypothesis of the lemma above; thus strong and regular epimorphisms in $\Pos$ coincide. Hence we obtain the Birkhoff-type theorems from \cite{Bl}.
			
			Strong epimorphisms in $\Pos$  are precisely those surjective maps $f\colon A\to B$ such that $y\leq y'$ in $B$ implies that there are 
			$x_0\leq x_1,x'_1\leq x_2, ..., x'_n\leq x_{n+1}$ such that $f(x_0)=y, f(x'_1)=f(x_1),\cdots, f(x_{n+1})=y'$ (see \cite[Theorem 3]{LN}).  
			Strong epimorphisms are closed under products but not stable under pullbacks because $\Pos$ is not regular.

			\item $\cv=\Met$ is the category of generalized metric spaces. As for $\Pos$, regular epimorphisms in $\Met$ are in particular surjections, and hence satisfy the hypothesis of the lemma above; thus strong and regular epimorphisms in $\Met$ coincide.
			
			Strong, and hence regular, epimorphisms in $\Met$ are precisely those surjective maps $f\colon A\to B$ such that $d(y,y')= \inf\{d(x,x')| fx=y, fx'=y'\}$ for any $y,y'$ in $B$. Clearly, every map with this property has the unique left lifting property to all monomorphisms, hence is a strong epimorphism. Conversely, it is easy to see that every morphism in $\Met$ factorizes as one satisfying the condition followed by a monomorphism. Applying this to a strong epimorphism $f$, it follows that the induced monomorphism is an isomorphism, and hence that $f$ satisfies the condition.
			
		\end{enumerate}
	}
\end{exams}

\begin{rem}
	If $\cv=\CMet$ is the full subcategory of $\Met$ consisting of complete metric spaces, then regular epimorphisms are not surjections; hence we cannot apply the lemma. It does not seem that strong epimorphisms are regular in $\CMet$.
	
	Similarly, in $\cv=\omega$-$\CPO$, regular epimorphisms are not surjections; hence we cannot apply the lemma. It does not seem that strong epimorphisms are regular in $\omega$-$\CPO$. Hence we do not obtain the Birkhoff-type theorem from \cite{ANR}.
\end{rem}

Next we recall the notion of $\ce$-projectivity~\cite{LR12} and introduce that of $\ce$-stability.

\begin{defi}
	Given a factorization system $(\ce,\cm)$ on $\cv$, recall that an object $X\in\cv$ is called {\em $\ce$-projective} if $\cv_0(X,-)\colon\cv\to\Set$ sends maps in $\ce$ to a surjection of sets. An object $X$ is called {\em $\ce$-stable} if $e^X\colon A^X\to B^X$ is in $\ce$ whenever $e\colon A\to B$ is in $\ce$. 
\end{defi}

If the unit is $\ce$-projective, then every $\ce$-stable object is $\ce$-projective. The following lemma gives sufficient conditions for the hypotheses of Theorem~\ref{birkhoff} to be satisfied.

\begin{lemma}\label{strong-pres}
	Consider an enriched factorization system $(\ce,\cm)$ on $\cv$ for which every map in $\cm$ is a monomorphism. Let $\mathbb L$ be a language whose function symbols have $\ce$-stable input arities. Then: \begin{enumerate}
		\item $(U^{-1}\ce,U^{-1}\cm)$ is a factorization system on $\Str(\mathbb L)$;
		\item $T:=UF\colon \cv\to\cv$ sends maps in $\ce$ to maps in $\ce$;
		\item if every strong epimorphism is regular in $\cv$ then the same holds in $\Str(\mathbb L)$; moreover $U$ preserves and reflects strong epimorphisms. 
	\end{enumerate} 
\end{lemma} 
\begin{proof}
	$(1)$. First let us prove that for any $g\colon A\to B$ in $\Str(\mathbb L)$, the $(\ce,\cm)$ factorization of the underlying morphism $Ug$ in $\cv$ lifts to a unique factorization $(e,m)$ of $g$ in $\Str(\mathbb L)$.
	
	Consider thus a morphism $g\colon A\to B$ in $\Str(\mathbb L)$, and the $(\ce,\cm)$ factorization $(e\colon A\to E, m\colon E\to B)$  of $Ug$ in $\cv$. We need to show that this induces a unique $\mathbb L$-structure on $E$ that makes $e$ and $m$ morphisms of $\mathbb L$-structures. For any $(X,Y)$-ary function symbol $f$ in $\mathbb L$, we can consider the induced diagram in $\cv$
	\begin{center}
		\begin{tikzpicture}[baseline=(current  bounding  box.south), scale=2]
			
			\node (a) at (-1,-0.8) {$A^Y$};
			\node (a0) at (0,-0.8) {$E^Y$};
			\node (b0) at (1,-0.8) {$B^Y$};
			
			\node (b) at (-1,0) {$A^X$};
			\node (c0) at (0,0) {$E^X$};
			\node (d0) at (1,0) {$B^X$};
			
			\path[font=\scriptsize]
			
			(a) edge [->] node [below] {$e^Y$} (a0)
			(a) edge [<-] node [left] {$f_A$} (b)
			(b) edge [->>] node [above] {$e^X$} (c0)
			
			(a0) edge [>->] node [below] {$m^Y$} (b0)
			(a0) edge [dashed, <-<] node [left] {$f_E$} (c0)
			(b0) edge [<-] node [right] {$f_B$} (d0)
			(c0) edge [>->] node [above] {$m^X$} (d0);
		\end{tikzpicture}	
	\end{center}
	where $e^X$ is still in $\ce$ since $X$ is $\ce$-stable, and $m^Y$ is still in $\cm$ since the factorization system is enriched. It follows that there is a unique arrow $f_E\colon E^X\to E^Y$ making the diagram above commute (by the orthogonality property of the factorization system). This endows $E$ with the desired $\mathbb L$-structure.
	
	This is enough to imply that $(U^{-1}\ce,U^{-1}\cm)$ is a factorization system in $\Str(\mathbb L)$; indeed, we know that the classes $U^{-1}\ce$ and $U^{-1}\cm$ are closed under composition and contain the isomorphisms, every morphisms $f$ in $\Str(\mathbb L)$ factors as $f=me$ with $e\in U^{-1}\ce$ and $m\in U^{-1}\cm$, and the factorization is functorial (since it is so in $\cv$, and $U$ is conservative).
	
	$(2)$. Given $e\colon A\to B$ in $\ce$, the map $UFe$ is in $\ce$ if and only if $Fe$ is in $U^{-1}\ce$, if and only if $Fe$ is left orthogonal with respect to $U^{-1}\cm$. Using that $F\dashv U$, it is easy to see that this last condition is equivalent to $e$ being orthogonal with respect to $\cm$, which is the case by hypothesis.
	
	$(3)$. Given any strong epimorphism $g\colon A\to B$ in $\Str(\mathbb L)$, we can consider the $(\ce,\cm)$ factorization of $Ug$ as above. Then, since this lifts to $g= me$ in $\Str(\mathbb L)$ where $m$ is a monomorphism ($U$ is conservative), it follows that $m$, and so also $Um$, is an isomorphism. Therefore $Ug$ coincides with its strong image, and is hence a strong epimorphism in $\cv$.
	
	To conclude, it remains to show that every strong epimorphism is regular in $\Str(\mathbb L)$. Given any such $g\colon A\to B$, we can consider its kernel pair $(h,k)\colon K\to A$; we ned to prove that $g$ is the coequalizer of $(h,k)$. For that, consider any other $\mathbb L$-structure $C$ and a map $t\colon A\to C$ with $th=tk$. By the arguments above $Ug$ is a (strong and hence) regular epimorphism; thus is the coequalizer of its kernel pair $(Uh,Uk)$. It follows that there exists a unique $s\colon UB\to UC$ for which $U(s\circ g)=Ut$. Using that the input arities of $\mathbb L$ are $\ce$-stable and that $Ug\in\ce$ (since maps in $\cm$ are monomorphisms, $\ce$ contains all strong epimorphisms), it is easy to see that $s$ is actually a map of $\mathbb L$-structures. This suffices to show that $g$ is a regular epimorphism. 
\end{proof}

Thus we obtain:

\begin{propo}\label{birkhoff2}
	Assume that in $\cv$ every strong epimorphism is regular and consider an enriched factorization system $(\ce,\cm)$ in $\cv$ for which every map in $\cm$ is a monomorphism. Let $\mathbb L$ be a language whose function symbols have $\ce$-stable input arities. Then, given $\lambda\leq\kappa\leq\infty$, enriched $\kappa$-Birkhoff subcategories of $\Str(\mathbb L)$ are precisely classes given by extended $\kappa$-ary equational $\mathbb L$-theories.
\end{propo}
\begin{proof}
	Follows from Theorem~\ref{l-birkhoff1} and Lemma~\ref{strong-pres}.
\end{proof}

We can apply this to the (strong epi, mono) = (regular epi, mono) factorization system in $\cv$ to obtain:

\begin{coro}
	Assume that in $\cv$ every strong epimorphism is regular. Let $\mathbb L$ be a language whose function symbols have regular epi-stable input arities. Then, given $\lambda\leq\kappa\leq\infty$, enriched $\kappa$-Birkhoff subcategories of $\Str(\mathbb L)$ are precisely classes given by extended $\kappa$-ary equational $\mathbb L$-theories.
\end{coro}

Next, we apply Proposition~\ref{birkhoff2} to a canonical factorisation system associated to $\cv$. A morphism $f\colon A\to B$ will be called a \textit{surjection} if $\cv_0(I,f)$ is surjective. For every object $Z$, the induced map $\delta_Z\colon Z_0\to Z$ is a surjection. Let $\Surj$ denote the class of all surjections in $\cv_0$ and let $\Inj$ be the class of morphisms of $\cv_0$ having the unique right lifting property with respect to every surjection. Morphisms from $\Inj$ will be called \textit{injections}.

\begin{rem}
	The paper \cite{R1} gives sufficient conditions for $(\Surj,\Inj)$ to be a factorization system on $\cv_0$. This will be an enriched factorization system whenever $\cv_0(I,-)$ is weakly strong monoidal in the sense of \cite{LT}; that is, if for any $X,Y\in\cv$ the induced map 
	$$\cv_0(I,X)\times \cv_0(I,Y)\to\cv_0(I, X\otimes Y)$$ 
	is surjective. Indeed, it is easy to see that in this case, if $f$ is a surjection then also $X\otimes f$ is one; therefore the factorization system is enriched by \cite[5.7]{LW}.
	Moreover, following \cite[3.4]{R1}, the injections are monomorphisms.
\end{rem}

Applying Proposition~\ref{birkhoff2} to this setting we immediately obtain the result below. Recall that a discrete object of $\cv$ is a coproduct of the unit.

\begin{coro}\label{birkhoff3}
	Assume that $(\Surj,\Inj)$ is a proper enriched factorization system in $\cv$, and let $\mathbb L$ be a $\lambda$-ary language whose function symbols have discrete input arities. Then, given $\lambda\leq\kappa\leq\infty$, enriched $\kappa$-Birkhoff subcategories of $\Str(\mathbb L)$ are precisely classes given by extended $\kappa$-ary equational $\mathbb L$-theories.
\end{coro}
\begin{proof}
	Since $(\Surj,\Inj)$ is proper, every surjection is an epimorphism and every regular epimorphism is a surjection. Moreover, surjections are stable under products and pullbacks since they are such in $\Set$; thus discrete objects are surjection-stable and every strong epimorphisms is regular in $\cv$ by Lemma~\ref{strong=regular}. Now the result follows by Proposition~\ref{birkhoff2}.
\end{proof}

We finish this section working within the hypotheses of Proposition~\ref{birkhoff2} and characterizing those enriched Birkhoff subcategories $\cl$ that are closed under {\em $\ce$-quotients}; meaning that, whenever $e\colon A\to B$ is such that $Ue\in\ce$ and $A\in\cl$, then also $B\in\cl$. 

\begin{theo}\label{birkhoff4}
	Assume that in $\cv$ every strong epimorphism is regular. Let $(\ce,\cm)$ be a proper enriched factorization system on $\cv$ such that for any $Y\in\cv_\kappa$ there exists an $\ce$-stable $X\in\cv_\kappa$ and a map $X\to Y$ in $\ce$.
	
	Consider a language $\mathbb L$ whose function symbols have $\ce$-stable input arities. Then enriched $\kappa$-Birkhoff subcategories of $\Str(\mathbb L)$ closed under $\ce$-quotients are precisely classes given by extended $\kappa$-ary equational $\mathbb L$-theories whose equations have $\ce$-stable arities. 
\end{theo}
\begin{proof}
	First observe that by Proposition~\ref{birkhoff2}, enriched $\kappa$-Birkhoff subcategories of $\Str(\mathbb L)$ coincide with those given by an $\kappa$-ary equational theory. 
	
	On one hand, if all equations in $\mathbb E$ have $\ce$-stable arities then $\Mod(\mathbb E)$ is closed under $\ce$-quotients. Indeed, consider a map $e\colon A\to B$ where $A$ is a $\mathbb E$-model and for which $Ue\in\ce$. Let $(s=t)$ be an equation from $\mathbb E$, with extended terms $s,t:(X,Y)$ where $X$ is $\ce$-stable by hypothesis (also $Y$ is, but that is not needed). We can consider the diagram below
	\begin{center}
		\begin{tikzpicture}[baseline=(current  bounding  box.south), scale=2]
			
			\node (z) at (-1,0) {$A^X$};
			\node (a) at (0,0) {$A^Y$};
			\node (b) at (-1,-0.8) {$B^X$};
			\node (b1) at (0,-0.8) {$B^Y$};

			\path[font=\scriptsize]
			
			(z) edge [->] node [left] {$e^X$} (b)
			(a) edge [->] node [right] {$e^Y$} (b1)
			
			([yshift=-1.5pt]z.east) edge [->] node [below] {$s_A$} ([yshift=-1.5pt]a.west)
			([yshift=1.5pt]z.east) edge [->] node [above] {$t_A$} ([yshift=1.5pt]a.west)
			([yshift=-1.5pt]b.east) edge [->] node [below] {$s_B$} ([yshift=-1.5pt]b1.west)
			([yshift=1.5pt]b.east) edge [->] node [above] {$t_B$} ([yshift=1.5pt]b1.west);
		\end{tikzpicture}
	\end{center}
	where $e^X$ is in $\ce$ since $X$ is $\ce$-stable. In particular $e^X$ is an epimorphism (by hypothesis on $\ce$); thus $s_A=t_A$ implies that $s_B=t_B$, and hence $B$ is a model of~$\mathbb E$.
	
	Conversely, let $\cl$ be an enriched $\kappa$-Birkhoff subcategory of $\Str(\mathbb L)$ closed under $\ce$-quotients. By the proof of Theorem~~\ref{l-birkhoff1} we know that $\cl$ is given by the orthogonality class defined with respect to the regular epimorphisms 
	$$ \rho_Z\colon FZ\longrightarrow F'Z $$
	where $Z\in \cv_\kappa$, and $\rho_Z$ is the reflection of $FZ$ into $\cl$. 
	
	Consider now the full subcategory $\cb$ of $\Str(\mathbb L)$ spanned by the objects orthogonal with respect to $\rho_X$ for any $\ce$-stable $X\in\cv_\kappa$. Then clearly we have inclusions $\cl\subseteq\cb\subseteq\Str(\mathbb L)$. To conclude, it is enough to prove that $\cl=\cb$, since $\cb$ has the desired description by the proof of Proposition~\ref{regular-epi-orth} --- use that, since $(\ce,\cm)$ is proper, the $\kappa$-presentable $\ce$-stable objects are a generating family.
	
	Applying the same arguments as above, since $\cb$ is also an enriched Birkhoff subcategory of $\Str(\mathbb L)$, we know that $\cb$ is reflective in $\Str(\mathbb L)$, that the reflections $\tau_Z\colon FZ\to F''Z$ are regular epimorphisms, and that orthogonality with respect to $\tau_Z$, for $Z\in\cv_\kappa$, in $\Str(\mathbb L)$ gives back $\cb$. This implies that for any $Z\in\cv_\kappa$ the map $\rho_Z$ factors as
	$$ \rho_Z\colon FZ\xrightarrow{\tau_Z} F''Z\xrightarrow{\eta_Z}F'Z$$
	where also $\eta_Z$ is an epimorphism (since both $\rho_Z$ and $\tau_Z$ were). To conclude, it is enough to prove that each $\eta_Z$, for $Z\in\cv_\kappa$, is an isomorphism. Indeed, $\cl$ and $\cb$ will then be described by the same orthogonality condition, making them the same subcategory of $\Str(\mathbb L)$. 
	
	Given any $\ce$-stable $X\in\cv_\kappa$, since $F''X\in\cb$ and by definition of $\cb$ as an orthogonality class, we obtain that the map $\eta_X$ is a split monomorphism, and hence an isomorphism. For a general $Z\in\cv_\kappa$, we can consider (by hypothesis on $\cv$) an $\ce$-stable $X\in\cv_\kappa$ together with $e\colon X\to Z$ in $U^{-1}\ce$. Thus in the diagram below
	\begin{center}
		\begin{tikzpicture}[baseline=(current  bounding  box.south), scale=2]
			
			\node (a0) at (0,0.8) {$FX$};
			\node (b0) at (1,0.8) {$F''X$};
			\node (e0) at (2,0.8) {$F'X$};
			\node (c0) at (0,0) {$FZ$};
			\node (d0) at (1,0) {$F''Z$};
			\node (f0) at (2,0) {$F'Z$};
			
			\path[font=\scriptsize]
			
			(a0) edge [->] node [above] {$\tau_{X}$} (b0)
			(b0) edge [->] node [above] {$\eta_{X}$} (e0)
			(b0) edge [->] node [below] {$\cong$} (e0)
			(a0) edge [->] node [left] {$Fe$} (c0)
			(b0) edge [->] node [left] {$F''e$} (d0)
			(e0) edge [->] node [right] {$F'e$} (f0)
			(c0) edge [->] node [below] {$\tau_Z$} (d0)
			(d0) edge [->] node [below] {$\eta_Z$} (f0);
		\end{tikzpicture}	
	\end{center} 
	we know that $Fe$ and $\tau_Z$ are in $U^{-1}\ce$ (the former by Lemma~\ref{strong-pres}, the latter being a regular epimorphism). Then, by \cite[2.1.4]{FK}, also $F''e$ is in $U^{-1}\ce$. But $F''X\in\cl$ and $\cl$ is closed under $\ce$-quotients; thus $F''Z\in\cl$ too. Therefore, $F''Z$ is orthogonal with respect to $\rho_Z$, making $\eta_Z$ a (split monomorphism and hence an) isomorphism.
\end{proof}

Finally, here are some examples:

\begin{exams}$ $
	{\setlength{\leftmargini}{1.6em}
		\begin{enumerate}
			\item If $\cv$ is a symmetric monoidal quasivariety~\cite{LT20} we can apply Theorem~\ref{birkhoff4}  to the (regular epi, mono) factorization system. Every regular projective object is $\ce$-stable; the converse holds if the unit $I$ is regular projective~\cite[Remark~4.15]{LT20}. 
			
			\item If $\cv=\Pos$ or $\Met$ we can apply Theorem~\ref{birkhoff4} to the $(\Surj,\Inj)$ factorization system, the $\Surj$-stable objects being the discrete ones.
			
			\item If in $\cv$ every strong epimorphism is regular, the unit $I$ is a generator, and epimorphisms are stable under products, then we can again apply Theorem~\ref{birkhoff4} to the (epi, strong mono) factorization system. Indeed, discrete objects are epi-stable and every object is covered, through an epimorphism, by a discrete object (since the unit is a generator). This applies in particular to $\cv=\Set,\Ab,\Pos$ and $\Met$.
			
		\end{enumerate}
	}
\end{exams}

\end{document}